\newcommand\ie{{\em i.e.}}
\def\A{{A}}
\def\N{\mathbb{N}}
\def\T{\mathbb{T}}
\def\Z{\mathbb{Z}}
\def\C{\mathbb{C}}
\def\D{{\mathcal D}}
\def\F{\mathscr F}
\def\f{u}
\def\H{\mathcal H}
\def\HH{\mathscr H}
\def\K{\mathcal K}
\def\e{{\mathrm e}}
\def\R{\mathbb{R}}
\def\V{\mathcal V}
\def\S{\mathcal S}
\def\J{\mathcal J}
\def\LL{\mathfrak L}
\def\U{\mathscr U}
\def\KK{\mathfrak K}
\def\OO{\mathcal O}
\def\UU{\mathcal U}
\def\MM{\mathcal M}
\def\NN{\mathcal N}
\def\SS{\mathcal S}
\def\TT{\mathcal{T}}
\def\SSS{\mathscr S}
\def\NNN{\mathfrak N}
\def\la{\langle}
\def\ra{\rangle}
\def\Id{\mathbb I}
\def\I{\mathscr I}
\def\B{\mathcal B}
\def\({\left(}
\def\[{\left[}
\def\){\right)}
\def\]{\right]}
\def\l|{\left\lvert}
\def\r|{\right\rvert}
\def\lp{\left\lVert}
\def\rp{\right\rVert}
\def\<{\langle}
\def\>{\rangle}
\def\Op{\mathfrak{Op}}
\def\XX{\mathfrak{X}}
\def\xx{\mathfrak{x}}
\def\ee{\mathfrak{e}}
\def\J{\mathscr{J}}
\def\d{\mathrm d}
\def\r{{\mathrm r}}
\def\bl{\boldsymbol{\lambda}}
\def\WW{\mathcal W}
\DeclareMathOperator{\mul}{mul}
\DeclareMathOperator{\rank}{rank}
\DeclareMathOperator{\Aut}{Aut}
\DeclareMathOperator{\sgn}{sgn}
\DeclareMathOperator{\Ran}{Ran}
\newtheorem{Theorem}{Theorem}[section]
\newtheorem{Remark}[Theorem]{Remark}
\newtheorem{Lemma}[Theorem]{Lemma}
\newtheorem{Proposition}[Theorem]{Proposition}
\newtheorem{Definition}[Theorem]{Definition}
\begin{document}

\title{Spectral and scattering theory for Schr\"odinger operators on perturbed topological crystals}

\author{D. Parra$^1$, S. Richard$^2$\footnote{Supported by JSPS Grant-in-Aid for Young Scientists A
no 26707005, and on leave of absence from
Univ.~Lyon, Universit\'e Claude Bernard Lyon 1, CNRS UMR 5208, Institut Camille Jordan,
43 blvd.~du 11 novembre 1918, F-69622 Villeurbanne cedex, France.}}

\date{\small}
\maketitle \vspace{-1cm}

\begin{quote}
\emph{
\begin{itemize}
\item[$^1$]
Univ.~Lyon, Universit\'e Claude Bernard Lyon 1,
CNRS UMR 5208, Institut Camille Jordan,
43 blvd. du 11 novembre 1918, F-69622 Villeurbanne cedex, France
\item[$^2$] Graduate school of mathematics, Nagoya University,
Chikusa-ku,  Nagoya 464-8602, Japan
\item[] \emph{E-mail:} parra@math.univ-lyon1.fr, richard@math.nagoya-u.ac.jp
\end{itemize}
}
\end{quote}

\begin{abstract}
In this paper we investigate the spectral and the scattering theory of Schr\"odinger operators acting on perturbed
periodic discrete graphs. The perturbations considered are of two types: either a multiplication operator by a short-range or a long-range function,
or a short-range type modification of the measure defined on the vertices and on the edges of the graph.
Mourre theory is used for describing the nature of the spectrum of the underlying operators.
For short-range perturbations, existence and completeness of local wave operators are also proved.
\end{abstract}

\textbf{2010 Mathematics Subject Classification:} 47A10, 05C63, 35R02 
\smallskip

\textbf{Keywords:} Discrete Laplacian, topological crystal, spectral theory, Mourre theory

\section{Introduction}\label{sec_intro}
\setcounter{equation}{0}

The aim of this paper is to describe the spectral theory of a Schr\"odinger operator $H$ acting on a perturbed periodic discrete graph.
The main strategy is to exploit the fibered decomposition of the periodic underlying operator $H_0$ in the unperturbed graph to get a Mourre estimate.
Then, by applying perturbative techniques, the description of the nature of the spectrum of $H$ can be deduced:
it consists of absolutely continuous spectrum, of a finite number (possibly zero) of eigenvalues of infinite multiplicity,
and of eigenvalues of finite multiplicity which can accumulate only at a finite set of thresholds.
The scattering theory for the pair $(H, H_0)$ is also investigated.

The study of Laplace operators on infinite graphs has recently attracted lots of attention.
Let us mention for example the problem of essential self-adjointness for very general infinite graphs \cite{Gol, Keller},
or the more precise study of the spectrum for bounded Laplacians \cite{Ando123,MRT07}.
For periodic graphs it is well-known that this spectrum has a band structure
with at most a finite number of eigenvalues of infinite multiplicity \cite{HN09}.
This structure is preserved if one considers periodic Schr\"odinger operators \cite{KS14,KS15,KS15x}.
Our interest is in what happens when such periodic Schr\"odinger operators are perturbed.

The perturbations we consider are of two types. On the one hand we add a potential that decays at infinity either as a short-range or as a long-range function.
To the best of our knowledge this has not been studied for general periodic graphs and only the case of $\Z^d$ has been fully investigated in $\cite{BS99}$.
In that respect, our main theorem generalizes such results to arbitrary periodic graphs.
Note that some related results on the inverse scattering problem are available for $\Z^d$ in \cite{IK12} and the hexagon lattice in \cite{An13},
but only compactly supported perturbations are considered.

The second types of perturbations we consider correspond to the modification of the graph itself.
This kind of perturbations has recently been studied in \cite{SS15x} for investigating the stability of the essential spectrum.
In \cite{AIM15} results similar to ours are exhibited, but the perturbations considered there are only compactly supported
and some implicit conditions on the Floquet-Bloch variety are assumed. These two restrictions do not appear in our work.
Let us still mention the related work \cite{CT13} where compactly supported perturbation are considered in the framework of a regular tree.

As pointed before, the two main tools that we use is the Floquet-Bloch decomposition of periodic Schr\"odinger operator and Mourre theory.
This decomposition is an important tool for the analysis of the periodic graphs and we mention only a few articles that use it \cite{An13,AIM15,HN09,KS14,KSS98}.
For Mourre theory, we refer to \cite{ABG96} for the general theory and to \cite{GN98} for this theory applied to analytically fibered operators
from which our work is inspired. In the discrete setting, this theory has already been used for example in \cite{AF00,MRT07}.
In the special case of the graph $\Z^d$, it plays a central role in \cite{BS99}.
Mourre theory for more general periodic graphs has also been mentioned in \cite{HN09} for proving that the Laplace operator has a
purely absolutely continuous spectrum outside some discrete spectrum. However, since no perturbation were considered in that paper,
the theory was not further developed. Our paper can thus also be seen as an extension of that work.

Finally, we would like to stress that several definitions of periodic graphs can be found in the literature.
We have opted for the setting of topological crystals which has the advantage that no embedding in the Euclidean space is needed.
We refer to \cite{Su12} for a thorough introduction to topological crystals and to many examples of such structures.

Let us still mention that in this paper we restrict
our attention to Laplace operators acting on the vertices of the graph. In the companion paper \cite{P16} still in preparation,
Gauss-Bonnet operators are studied, as well as the Laplacian acting on edges \cite{BGJ15x}.
Note that the Gauss-Bonnet operator is a Dirac-type operator that acts both on vertices and edges.
This operator has recently been investigated in \cite{AT15,GH14}.

We finally describe the content of this paper. In Section \ref{sec_main_result} we describe the framework of our investigations and provide our main result.
Under suitable assumptions it consists in the description of the spectral type of the operators under investigation, and in the existence
of suitable wave operators. More precise information on the purely periodic setting are then presented in Section \ref{sec_H_0} and we show that
the periodic operator $H_0$ can be decomposed into a family of magnetic Schr\"odinger operators.
In Section \ref{section:analyticdecomposition} it is proved that the latter operator is unitarily equivalent to an analytically fibered operator.
In order to be self-contained, a brief review on real analyticity and a few definitions
and results are provided.
Section \ref{sec_Mourre} is dedicated to the conjugate operator theory, also called Mourre theory.
For completeness, we first describe the abstract framework of this theory,
and provide then a thorough construction of the necessary conjugate operator.
In fact, this construction is inspired from \cite{GN98} but part of the argumentation has been simplified for our context.
In addition, we can take advantage of the recent reference \cite{RT09} which supplies a lot of information on toroidal pseudodifferential operators.
Based on all these preliminary constructions, the proof of the main theorem is given in Section \ref{section:proofbis}.
A first preliminary subsection discuss the regularity of some abstract operators with respect to the newly constructed conjugate operator,
and these results are finally applied to operators appearing in our context of the perturbation of a periodic graph.

\section{Framework and main result}\label{sec_main_result}
\setcounter{equation}{0}

In this section we describe the framework of our investigations and state our main result.

A graph $X=\big(V(X),E(X)\big)$ is composed of a set of vertices $V(X)$ and a set of unoriented edges $E(X)$.
Graphs with loops and parallel edges are accepted.
Generically we shall use the notation $x,y$ for elements of $V(X)$, and $\e=\{x,y\}$ for elements of $E(x)$.
If both $V(X)$ and $E(X)$ are finite sets, the graph $X$ is said to be finite.

A morphism $\omega: X\to \XX$ between two graphs $X$ and $\XX$ is composed of two maps $\omega:V(X)\to V(\XX)$
and $\omega:E(X)\to E(\XX)$ such that it preserves the adjacency relations between vertices and edges,
namely $\omega(\e)=\{\omega(x),\omega(y)\}$.
Let us stress that we use the same notation for the two maps $\omega:V(X)\to V(\XX)$
and $\omega:E(X)\to E(\XX)$, and that this should not lead to any confusion.
An isomorphism is a morphism that is a bijection on the vertices and on the edges.
The group of isomorphisms of a graph $X$ into itself is denoted by $\Aut(X)$.
For a vertex $x\in V(X)$ we also set $E(X)_x:=\{\e\in E(X)\mid x\in\e\}$.
If $E(X)_x$ is finite for every $x\in V(X)$ we say that $X$ is locally finite.

A morphism $\omega:X\to\XX$ between two graphs is said to be \emph{a covering map} if
\begin{enumerate}
\item[(i)] $\omega:V(X)\to V(\XX)$ is surjective,
\item[(ii)] for all $x\in V(X)$, the restriction $\omega|_{E(X)_x}:E(X)_x\to E(\XX)_{\omega(x)}$ is a bijection.
\end{enumerate}
In that case we say that $X$ is a \emph{covering graph} over the \emph{base graph} $\XX$.
For such a covering, we define the \emph{transformation group} of the covering as the subgroup of $\Aut(X)$,
denoted by $\Gamma$, such that for every $\mu\in\Gamma$ the equality $\omega\circ\mu=\omega$ holds.
We now define a topological crystal, and refer to \cite[Sec. 6.2]{Su12} for more details.

\begin{Definition}\label{topocrystal}
A $d$-dimensional topological crystal is a quadruplet $(X,\XX,\omega,\Gamma)$ such that:
\begin{enumerate}
\item[(i)] $X$ is an infinite graph,
\item[(ii)] $\XX$ is a finite graph,
\item[(iii)] $\omega:X\to \XX$ is a covering map,
\item[(iv)] The transformation group $\Gamma$ of $\omega$ is isomorphic to $\Z^d$,
\item[(v)] $\omega$ is regular, \ie~for every $x$, $y\in V(X)$ satisfying $\omega(x)=\omega(y)$ there exists $\mu\in\Gamma$ such that $x=\mu y$.
\end{enumerate}
\end{Definition}

We will usually say that $X$ is a topological crystal if it admits a $d$-dimensional topological crystal structure $(X,\XX,\omega,\Gamma)$.
Note that all topological crystal are locally finite, with an upper bound for the number of elements in $E(X)_x$ independent of $x$.
Indeed, the local finiteness and the fixed upper bound follow from the definition of a covering and the assumption (ii) of the previous definition.

From the set of unoriented edges $E(X)$ of an arbitrary graph $X$ we construct the set of oriented edges $\A(X)$
by considering for every unoriented edge  $\{x,y\}$ both $(x,y)$ and $(y,x)$ in $\A(X)$.
The elements of $\A(X)$ are still denoted by $\e$.
The origin vertex of such an oriented edge $\e$ is denoted by $o(\e)$, the terminal one by $t(\e)$,
and $\overline{\e}$ denotes the edge obtained from $\e$ by interchanging the vertices, \ie~$o(\overline{\e})=t(\e)$ and $t(\overline{\e})=o(\e)$.
For $x\in V(X)$ we set $A(X)_x\equiv \A_x:=\{\e\in\A(X)\mid o(\e)=x\}$.
Clearly, any morphism $\omega$ between a graph $X$ and a graph $\XX$, and in particular any covering map,
can be extended to a map sending oriented edges of $\A(X)$
to oriented edges of $\A(\XX)$. For this extension we keep the convenient notation $\omega:\A(X)\to \A(\XX)$.

A \emph{measure} $m$ on a graph $X$ is a strictly positive function defined on vertices and on unoriented edges.
On oriented edges, the measure satisfies $m(\e)=m(\overline{\e})$.
From now on, let us assume that the graph $X$ is locally finite. For such a graph
the Laplace operator is defined on the space of $0$-\emph{cochains} $C^0(X):=\{f\mid V(X)\to\C\}$
by
\begin{equation*}
\[\Delta(X,m)f\](x)=\sum_{\e\in\A_x}\frac{m(\e)}{m(x)}\big(f\big(t(\e)\big)-f(x)\big), \qquad  \forall f\in C^0(X).
\end{equation*}
Furthermore, when
\begin{equation}\label{eq_def_deg}
\deg_{m}: V(X)\to\R_+ , \quad \deg_{m} (x):=\sum_{\e\in\A_{x}}\frac{m(e)}{m(x)}
\end{equation}
is bounded, then the operator $\Delta(X,m)$ is a bounded self-adjoint operator in the Hilbert space
\begin{equation*}
l^2(X,m)=\Big\{f \in C^0(X) \mid \lp f\rp^2:=\sum_{x\in V(X)}\;\!m(x)|f(x)|^2<\infty\Big\}
\end{equation*}
endowed with the scalar product
\begin{equation*}
\langle f,g\rangle = \sum_{x\in V(X)}\;\!m(x)f(x)\;\!\overline{g(x)}\qquad \forall f,g\in l^2(X,m).
\end{equation*}

Let us now consider a topological crystal $X$, a $\Gamma$-periodic measure $m_0$ and a $\Gamma$-periodic function $R_0: V(X)\to \R$.
The periodicity means that for every $\mu\in\Gamma$, $x\in V(X)$ and $\e \in E(X)$ we have $m_0(\mu x)=m_0(x)$, $m_0(\mu \e) = m_0(\e)$ and $R_0(\mu x)=R_0(x)$.
We can then provide the definition of a \emph{periodic Schr\"odinger operator}. It consists in the operator
\begin{equation}\label{h0vertices}
H_0:=-\Delta (X,m_0)+R_0.
\end{equation}
Note that we use the same notation for the function $R_0$ and for the corresponding multiplication operator.
As a consequence of our assumptions, the expression $H_0$ defines a bounded self-adjoint operator in the Hilbert space $l^2(X,m_0)$.

Our aim is to study rather general perturbations of the operator $H_0$.
In fact, we shall consider two types of perturbations. The first one consists in replacing the multiplication operator by a function
$R$ which converges rapidly enough to $R_0$ at infinity. The precise formulation
will be provided in the subsequent statement.
The other type of perturbation is more substantial and consists in modifying the measure on the graph.
For that purpose, we shall consider a second strictly positive measure $m$ on $X$,
and which converges in a suitable sense to the $\Gamma$-periodic measure $m_0$.
The corresponding perturbed operator acts then in the Hilbert space $l^2(X,m)$ and has the form
\begin{equation}\label{full}
H=-\Delta (X,m)+ R.
\end{equation}

Let us stress that this modification of the measure naturally leads to a two-Hilbert space problem since the measures $m_0$ and $m$
enter into the definition of the underlying Hilbert spaces.
Fortunately, since the graph structure is not modified, a unitary transformation between both spaces is at hand.
Namely, we consider $\J:l^{2}(X,m)\to l^{2}(X,m_0)$ defined by
\begin{equation}\label{eq_def_J}
[\J f](x)=\Big(\frac{m(x)}{m_0(x)}\Big)^{\frac12}f(x), \qquad f\in l^2(X,m).
\end{equation}
Note that this map is well-defined and unitary since $m_0(x)$ and $m(x)$ are assumed to be strictly positive for any $x\in V(X)$.
The inverse of $\J$ is given by  $[\J^*f](x) = \big(\frac{m_0(x)}{m(x)}\big)^{\frac12}f(x)$.
The fact that $\J$ is unitary plays an essential role in the comparison of both operators.

We have now almost all the ingredients for stating our main result.
The missing ingredient is the definition of the entire part of a vertex and of an edge, denoted respectively by
$\[x\]\in\Gamma$ and $\[\e\]\in\Gamma$, see \eqref{entirex} and \eqref{entiree} for the details.
Indeed, in order to properly introduce these notions some additional definitions are necessary and we have decided
to postpone them to the next section.
We still mention that the isomorphism between $\Gamma$ and $\Z^d$ allows us to borrow the Euclidean norm $|\cdot|$ of $\Z^d$ and to
endow $\Gamma$ with it. As a consequence of this construction, the notations $|[x]|$ and $|[e]|$ are well-defined,
and the notion of rate of convergence towards infinity is available.

\begin{Theorem}\label{principalvertices}
Let $X$ be a topological crystal. Let $H_0$ and $H$ be defined by \eqref{h0vertices} and \eqref{full} respectively. Assume that $m$ satisfies
\begin{equation}\label{measureshort}
\int_{1}^{\infty}\d\lambda \sup_{\lambda<|\[\e\]|<2\lambda}\left|\frac{m(\e)}{m(o(\e))}-\frac{m_0(\e)}{m_0(o(\e))}\right|<\infty\ .
\end{equation}
Assume also that the difference $R-R_0$ is equal to $R_s+R_l$ which satisfy
\begin{equation} \label{Rshipotesis}
\int_{1}^{\infty}\d\lambda \sup_{\lambda<|\[x\]|<2\lambda}\left|R_s(x)\right|<\infty,
\end{equation}
and
\begin{equation}\label{Rlhipotesis}
R_l(x)\xrightarrow{x\to\infty}0, \quad \hbox{ and } \quad
\int_{1}^{\infty}\d\lambda \sup_{\lambda<|\[\e\]|<2\lambda}\big|R_l\big(t(\e)\big)-R_l\big(o(\e)\big)\big|<\infty\ .
\end{equation}
Then, there exists a discrete set $\tau\subset\R$ such that for every closed interval $I\subset\R\backslash\tau$ the following assertions hold:
\begin{enumerate}
\item $H_0$ has not eigenvalues in $I$ and $H$ has at most a finite number of eigenvalues in $I$
and each of these eigenvalues is of finite multiplicity,
\item $\sigma_{sc}(H_0)\cap I = \sigma_{sc}(H)\cap I=\emptyset$,
\item If $R_l\equiv0$, the local wave operators
$$
W_{\pm}(H,H_0;\J^*,I)=s-\lim_{t\to\pm\infty}e^{iH t}\J^* e^{-iH_0 t}E_{H_0}(I)
$$
exist and are asymptotically complete, \ie~$\Ran (W_-)=\Ran( W_+)=E_{H}^{ac}(I)l^2(X,m)$.
\end{enumerate}
\end{Theorem}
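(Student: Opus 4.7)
The plan is to reduce the theorem to the abstract conjugate-operator (Mourre) machinery described in Section~\ref{sec_Mourre}, using the analytically fibered structure of $H_0$ established in Section~\ref{section:analyticdecomposition}. As a preliminary step I transport $H$ to the fixed Hilbert space $l^2(X,m_0)$ by setting $\widetilde H:=\J H\J^*$. Because $\J$ is the operator of multiplication by $\big(m/m_0\big)^{1/2}$ and $R$ is a multiplication operator, one has $\J R\J^*=R$, so
\begin{equation*}
V:=\widetilde H-H_0=\big(\J(-\Delta(X,m))\J^*+\Delta(X,m_0)\big)+R_s+R_l\;\!.
\end{equation*}
I define $\tau$ to be the threshold set of the analytically fibered operator $H_0$, i.e.\ the discrete set on the complement of which the conjugate operator $A$ constructed in Section~\ref{sec_Mourre} yields a strict Mourre estimate
$E_{H_0}(I)\,i[H_0,A]\,E_{H_0}(I)\geq c\;\!E_{H_0}(I)$ for some $c>0$, whenever $I\subset\R\setminus\tau$ is a closed interval.

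The heart of the proof is then to verify the regularity of $V$ with respect to $A$. The first summand of $V$ is the bounded operator whose matrix entries are controlled by the quantity appearing in \eqref{measureshort}; treating it as a weighted discrete difference operator, the hypothesis \eqref{measureshort} is exactly the condition needed (via the abstract criterion recalled in Section~\ref{sec_Mourre} for operators behaving like $f(Q)$ composed with a bounded hopping) to ensure it lies in $C^{1,1}(A)$ and is locally compact relative to $H_0$. An analogous argument based on \eqref{Rshipotesis} yields $R_s\in C^{1,1}(A)$, again with the required compactness. For the long-range part, the first condition in \eqref{Rlhipotesis} ensures that $R_l$ is compact (since $[\cdot]$ tends to infinity with the vertex), while the second condition is precisely what makes the commutator $[R_l,A]$ a bounded operator with the appropriate $C^{0,1}(A)$ decay, so that $R_l\in C^{1,1}(A)$ with an additional long-range structure. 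Putting everything together, $V\in C^{1,1}(A)$ and the relative-compactness ensures $\widetilde H\in C^{1,1}(A)$ together with the preserved Mourre estimate on $I$.

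With this in hand, items (1) and (2) of the theorem follow directly from the standard consequences of the Mourre estimate: the virial theorem yields absence of eigenvalues for $H_0$ on $I$ (since $[H_0,iA]$ is strictly positive on the relevant spectral subspace), the Mourre estimate combined with $C^{1,1}$ regularity gives the limiting absorption principle for $\widetilde H$ and hence for $H$ on $I$, from which $\sigma_{sc}(H)\cap I=\emptyset$, and eigenvalues of $H$ in $I$ are of finite total multiplicity (each of finite multiplicity, and finitely many of them, since they can only accumulate at thresholds). For item (3), when $R_l\equiv 0$ the entire perturbation $V$ is short-range with respect to $A$, so the LAP for both $H_0$ and $H$ together with Kato's smoothness theory (in the two-Hilbert-space formulation, with identifier $\J^*$) yields the existence of the local wave operators and the intertwining/completeness relation $\Ran W_\pm(H,H_0;\J^*,I)=E_{H}^{ac}(I)\;\!l^2(X,m)$.

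The main obstacle I anticipate is the verification that the measure-perturbation term $\J(-\Delta(X,m))\J^*+\Delta(X,m_0)$ satisfies the appropriate $C^{1,1}(A)$ estimate. Unlike for a genuine multiplication potential, this is a hopping operator and the conjugate $A$ is a toroidal pseudodifferential-type operator in the fibered representation; the commutator $[A,\cdot]$ acts non-trivially on both ``diagonal'' and ``off-diagonal'' parts. However, the hypothesis \eqref{measureshort} is stated precisely in a form measuring the decay of the matrix elements in the entire-part partition of $V(X)$, which matches the action of $A$ on the periodicity cells, so the argument should reduce to an abstract lemma expressing $C^{1,1}(A)$ regularity in terms of the Dini-type integral of the $\Gamma$-localized norm of $V$.
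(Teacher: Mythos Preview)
Your proposal is correct and follows essentially the same route as the paper: conjugate $H$ by $\J$, split $V=\J H\J^*-H_0$ into the measure-change hopping term plus $R_s+R_l$, verify $C^{1,1}(A_I)$ for each piece via the Dini-type integrals, and then invoke the abstract Mourre results (Theorem~\ref{mourreap} for items 1--2 and \cite[Thm.~7.4.3]{ABG96} for item~3). The paper executes the regularity checks explicitly in the toroidal pseudodifferential calculus on $L^2(\T^d;\C^n)$ (Lemmas~\ref{lemmashortxbis}, \ref{lemma_wave}, \ref{lemmalong}), and for the long-range part uses the re-splitting $\tilde R_l(x):=R_l([x]x_1)$, $\tilde R_s:=R_s+(R_l-\tilde R_l)$ so that the genuinely long-range piece becomes a \emph{scalar} multiplication in the fiber, which is the one technical trick your outline does not anticipate.
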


Note that the operator $\J^*$ enters into the definition of the wave operators (instead of the more traditional notation $\J$)
since we have defined $\J$ from $l^2(X,m)$ to $l^2(X,m_0)$. This choice is slightly more natural in our context.

The hypothesis \eqref{measureshort} and \eqref{Rshipotesis} are usually referred to as a \emph{short-range} type of decay.
In particular it is satisfied for functions that decay faster than $C(1+|[x]|)^{-1-\epsilon}$ for some constant $C$ independent of $x$.
It is worth mentioning that that condition \eqref{measureshort} is quite general and is automatically satisfied if the difference $m-m_0$ itself satisfies a
short-range type of decay. For example if we assume that $|m(\e)-m_0(\e)|\leq C (1+|[e]|)^{-1-\epsilon}$ and
$|m(x)-m_0(x)|\leq C' (1+|[x]|)^{-1-\epsilon}$, then \eqref{measureshort} is satisfied.
On the other hand \eqref{Rlhipotesis} is usually called a \emph{long-range} decay since the difference $R_l\big(t(\e)\big)-R_l\big(o(\e)\big)$
should be thought as the derivative of $R_l$ at the point $o(\e)$ in the direction $\e$.
To sum up we can say that we cover perturbations by short-range and long-range potentials but only by short-range perturbation of the metric.

\begin{Remark}
A more drastic modification would be to allow $m(x)=0$ for some $x\in V(X)$,
and this would roughly correspond to the suppression of some vertices in the graph. Reciprocally,
it would also be natural to consider a perturbation of the operator $H_0$ on the topological crystal $X$
by the addition of some vertices to $X$. Note that these modifications are more difficult to encode since
there would be no natural unitary operator available between the corresponding Hilbert spaces.
These perturbations will not be considered in the present paper but we intend to come back to them
in the future.
\end{Remark}

\section{Periodic operator and its direct integral decomposition}\label{sec_H_0}
\setcounter{equation}{0}

The aim of this section is to provide some additional information on the periodic Schr\"odinger operator and to show that this operator can be decomposed
into the direct integral of magnetic Schr\"odinger operators defined on the small graph $\XX$.
This decomposition is an important tool for studying its spectral properties, as shown for example in \cite{An13,HN09,KS14,KSS98}.

Let us consider a topological crystal $(X,\XX,\omega,\Gamma)$.
The notation $x$, resp.~$\xx$, will be used for the elements of $V(X)$, resp.~of $V(\XX)$,
and accordingly the notation $\e$, resp.~$\ee$, will be used for the elements of $E(X)$, resp.~of $E(\XX)$.
It follows from the assumption (v) in Definition \ref{topocrystal} that $X\backslash\Gamma\cong \XX$,
and therefore we can identify $V(\XX)$ as a subset of $V(X)$ by choosing a representative of each orbit.
Namely, since by assumption $V(\XX)=\{\xx_1,\dots,\xx_n\}$ for some $n\in\N$,
we choose $\{x_1,\dots,x_n\}\subset V(X)$ such that $\omega(x_j)=\xx_j$ for any $j\in \{1,\dots,n\}$.
For shortness we also use the notation $\check{x}:=\omega(x)\in V(\XX)$ for any $x\in V(X)$,
and reciprocally for any $\xx\in \XX$ we write $\hat{\xx}\in \{x_1,\dots,x_n\}$ for the unique element $x_j$ in this set such that $\omega(x_j)=\xx$.

As a consequence of the previous identification we can also identify $\A(\XX)$ as a subset of $\A(X)$.
More precisely, we identify $\A(\XX)$ with $\cup_{j=1}^n \A_{x_j}\subset\A(X)$ and use notations similar to the previous ones:
For any $\e\in \A(X)$ one sets $\check{\e}=\omega(\e)\in \A(\XX)$, and for any $\ee\in \A(\XX)$ one sets $\hat{\ee}\in \cup_{j=1}^n\A_{x_j}$
for the unique element in this set such that $\omega(\hat{\ee})=\ee$.
Let us stress that these identifications and notations depend only on the initial choice of $\{x_1,\dots,x_n\}\subset V(X)$.

We have now enough notations for defining the \emph{entire part} of a vertex $x$ as the map $\[\,\cdot\,\]:V(X)\to\Gamma$ satisfying
\begin{equation}\label{entirex}
\[x\]\widehat{\check{x}}=x\ .
\end{equation}
Similarly, the entire part of an edge is defined as the map $\[\,\cdot\,\]:\A(X)\to\Gamma$ satisfying
\begin{equation}\label{entiree}
\[\e\]\widehat{\check{\e}}=\e\ .
\end{equation}
The existence of the this function $[\, \cdot\,]$ follows from the assumption (v)
of Definition \ref{topocrystal} on the regularity of a topological crystal.
One easy consequence of the previous construction is that the equality
$\[\e\]=\[o(\e)\]$ holds for any $\e\in \A(X)$.

Let us finally define the map
\begin{equation*}
\eta:\A(X)\to\Gamma, \quad \eta(\e):=\[t(\e)\]\[o(\e)\]^{-1}
\end{equation*}
and call $\eta(\e)$ the \emph{index} of the edge $\e$. For any $\mu\in\Gamma$ we then infer that
\begin{equation*}
\eta(\mu\e)=\[t(\mu\e)\]\[o(\mu\e)\]^{-1}=\mu\[t(\e)\]\mu^{-1}\[o(\e)\]^{-1}=\eta(\e).
\end{equation*}
This periodicity enables us to define unambiguously $\eta:\A(\XX)\to \Gamma$ by the relation $\eta(\ee):=\eta(\hat{\ee})$
for every $\ee\in\A(\XX)$. Again, this index on $\A(\XX)$ depends only on the initial choice $\{x_1,\dots,x_n\}\subset V(X)$
and could not be define by considering only $\A(\XX)$.

We now introduce the dual group of $\Gamma$, denoted by $\hat{\Gamma}$. It consists in group homomorphisms from $\Gamma$
to the multiplicative group $\T\subset \C$ endowed with pointwise multiplication. Since $\Gamma$ is discrete,
$\hat{\Gamma}$ is a compact Abelian group and comes with a normalized Haar measure $\d\xi$ of volume 1 \cite[Proposition 4.24]{Fo95}.
We can then define the Fourier transform $\F: l^1(\Gamma)\to C(\hat{\Gamma})$ by
\begin{equation}\label{def_Fourier}
[\F f](\xi)\equiv \hat{f}(\xi):=\sum_{\mu\in \Gamma} \overline{\xi(\mu)}f(\mu)
\end{equation}
and it is well-known that this extends to a unitary map from $l^2(\Gamma)$ to $L^2(\hat{\Gamma})$
which is still denoted by $\F$. The adjoint map $\F^*:L^2(\hat{\Gamma})\to l^2(\Gamma)$ is defined on
elements in $L^1(\hat{\Gamma})$ by the formula $[\F^* u](\mu)=\int_{\hat{\Gamma}} \d \xi \;\!\xi(\mu)u(\xi)$.
Furthermore, by the Fourier inversion formula for any $f\in l^1(\Gamma)$ one has \cite[Theorem 4.21]{Fo95}:
\begin{equation*}
f(\mu)=\int_{\hat{\Gamma}}\d\xi\;\!\xi(\mu)  \hat{f}(\xi),
\end{equation*}
or equivalently for any $u \in L^1(\hat{\Gamma})$ such that $\F^* u \in l^1(\Gamma)$
\begin{equation*}
u(\xi)=\sum_{\mu\in\Gamma} \overline{\xi(\mu)} [\F^* u](\mu).
\end{equation*}

Let us now provide the direct integral decomposition mentioned at the beginning of this section.
The framework is the following: a topological crystal  $(X,\XX,\omega,\Gamma)$
and a $\Gamma$-periodic measure $m_0$ on $X$. Because of its periodicity, this measure is also well-defined on $\XX$
by the relation $m_0(\xx):=m_0(\hat{\xx})$ and $m_0(\ee):=m_0(\hat{\ee})$.
For simplicity, we keep the same notation for this measure on $\XX$.
Let us consider the Hilbert spaces $l^2(X,m_0)$ and $L^2\big(\hat{\Gamma}; l^2(\XX,m_0)\big)$, and use the shorter notation
$l^2(X)$ and $L^2\big(\hat{\Gamma}; l^2(\XX)\big)$. We also denote by $c_c(X)\subset l^2(X)$ the space of $0$-cochains of finite support.
We then define the map $\U: c_c(X) \to L^2\big(\hat{\Gamma}; l^2(\XX)\big)$ for $f\in c_c(X)$,
$\xi\in \hat{\Gamma}$, and $\xx\in V(\XX)$ by
\begin{equation}\label{def_de_U}
[\U f](\xi,\xx)=\sum_{\mu\in\Gamma}\overline{\xi(\mu)}f(\mu\hat{\xx}).
\end{equation}
Clearly, the map $\U$ corresponds the composition of two maps: the identification of $l^2(X)$ with $l^2\big(\Gamma;l^2(\XX)\big)$
and the Fourier transform introduced in \eqref{def_Fourier}.
As a consequence, $\U$ extends to a unitary map from $l^2(X)$ to $L^2\big(\hat{\Gamma};l^2(\XX)\big)$, and we shall
keep the same notation for this continuous extension.
The formula for its adjoint is then given on any $\f\in L^1\big(\hat{\Gamma};l^2(\XX)\big)$ by
\begin{equation*}
[\U^*\f](x) = \int_{\hat{\Gamma}}\d\xi\;\!\xi([x])\f(\xi,\check{x}).
\end{equation*}

\begin{Lemma}
Let  $(X,\XX,\omega,\Gamma)$ be a topological crystal and let $m_0$ be a $\Gamma$-periodic measure on $X$.
Then for any $u\in L^2\big(\hat{\Gamma};l^2(\XX)\big)$, every $\xx\in V(\XX)$ and almost every $\xi \in \hat{\Gamma}$
the following equality holds:
\begin{equation*}
[\U \Delta(X,m_0)\U^* \f](\xi,\xx)
= \sum_{\ee\in\A_{\xx}}\frac{m_0(\ee)}{m_0(\xx)} \Big[\xi\big(\eta(\ee)\big)\f\big(\xi, t(\ee)\big)- \f(\xi,\xx)\Big].
\end{equation*}
\end{Lemma}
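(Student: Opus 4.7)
The plan is to work on a dense subspace and then extend by continuity. A convenient choice is to take $\f$ of the form $\f(\xi,\xx) = \U g$ with $g \in c_c(X)$, or equivalently (by Fourier series on $\hat\Gamma$) finite sums of the form $\f(\xi,\xx) = \xi(\nu) \delta_{\yy}(\xx)$ for $\nu\in\Gamma$ and $\yy\in V(\XX)$; for such $\f$, the function $\U^*\f$ has finite support, so applying $\Delta(X,m_0)$ to it presents no convergence issue. Since $\Delta(X,m_0)$ is bounded and $\U$ is unitary, verifying the identity weakly on such $\f$ (and on a single vertex $\xx$ at a time) will imply it in general.

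The first step is to rewrite $\U^* \f$ at the relevant vertices using $[\U^*\f](x) = \int_{\hat\Gamma}\d\xi\,\xi([x])\f(\xi,\check{x})$. Given any $\mu\in\Gamma$ and $\xx\in V(\XX)$, I would set $x=\mu\hat{\xx}$ and note the bookkeeping identities that follow directly from Section \ref{sec_H_0}: $[\mu\hat{\xx}] = \mu$, $\check{\mu\hat{\xx}}=\xx$, and for each $\e\in\A_{\mu\hat{\xx}}$ with image $\ee:=\omega(\e)\in\A_{\xx}$ one has $[o(\e)]=\mu$, $\check{t(\e)}=t(\ee)$, and $[t(\e)]=[o(\e)]\;\!\eta(\e)=\mu\;\!\eta(\ee)$. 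The crucial input is that $\omega$ restricts to a bijection $\A_{\mu\hat{\xx}}\to\A_{\xx}$ (covering property), which allows the sum over $\e\in\A_{\mu\hat{\xx}}$ to be re-indexed by $\ee\in\A_{\xx}$, while $\Gamma$-periodicity of $m_0$ converts $\tfrac{m_0(\e)}{m_0(\mu\hat{\xx})}$ into $\tfrac{m_0(\ee)}{m_0(\xx)}$.

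Inserting these identities into the definition of $\Delta(X,m_0)$ would give
\begin{equation*}
[\Delta(X,m_0)\U^*\f](\mu\hat{\xx})
= \sum_{\ee\in\A_{\xx}}\frac{m_0(\ee)}{m_0(\xx)}\int_{\hat\Gamma}\d\xi'\,\xi'(\mu)\Big[\xi'\big(\eta(\ee)\big)\f\big(\xi',t(\ee)\big)-\f(\xi',\xx)\Big].
\end{equation*}
Applying $\U$, namely multiplying by $\overline{\xi(\mu)}$ and summing over $\mu\in\Gamma$, I would exchange the (finite, on my dense class) sum with the integral and invoke Fourier inversion on $\Gamma$: the pairing $\sum_{\mu}\overline{\xi(\mu)}\xi'(\mu)$ collapses the integral in $\xi'$ to the evaluation at $\xi'=\xi$, yielding exactly the claimed formula.

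I expect the only real obstacle to be the careful justification of the index computation $[t(\e)]=\mu\;\!\eta(\ee)$ and of the identification of edge weights via the covering bijection; once those are in place the rest is bookkeeping plus Fourier inversion. Density and boundedness of $\Delta(X,m_0)$ then extend the identity to all $\f\in L^2(\hat\Gamma;l^2(\XX))$, with equality holding for a.e. $\xi$ and every $\xx\in V(\XX)$ (the set $V(\XX)$ is finite, so no additional measurability issue arises).
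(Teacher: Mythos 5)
Your argument is correct and follows essentially the same route as the paper: reduce to the dense class where $\U^*\f$ has finite support, expand $\Delta$, re-index the sum over $\A_{\mu\hat\xx}$ via the covering bijection and $\Gamma$-periodicity of $m_0$, and use the index identity $[t(\e)]=[o(\e)]\eta(\e)$ to identify the result as the claimed fiber operator. The only cosmetic difference is that you substitute the integral representation of $\U^*$ and then invoke Fourier inversion to collapse the $\mu$-sum against the $\xi'$-integral, whereas the paper keeps $[\U^*\f]$ abstract and recognizes $\sum_\mu\overline{\xi(\mu)}[\U^*\f]\big(\mu\eta(\e)\widehat{\omega(t(\e))}\big)=\xi(\eta(\e))\f\big(\xi,\omega(t(\e))\big)$ directly from the definition of $\U$; the two bookkeepings are equivalent.
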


\begin{proof}
For simplicity, we shall write $\Delta$ for $\Delta(X,m_0)$.
Let $\f\in L^2\big(\hat{\Gamma};l^2(\XX)\big)$ such that
$\U^*\f$ has a compact support on $X$. Then for almost every $\xi \in \hat{\Gamma}$ and $\xx\in V(\XX)$ one has
\begin{align*}
[\U \Delta\U^* \f](\xi,\xx) & = \sum_{\mu\in\Gamma}\overline{\xi(\mu)}[\Delta\U^* \f](\mu\hat{\xx}) \\
& =  \sum_{\mu\in\Gamma}\overline{\xi(\mu)}
\sum_{\e\in\A_{\mu\hat{\xx}}}\frac{m_0(\e)}{m_0(\mu\hat{\xx})}\Big(\big[\U^*\f\big]\big(t(\e)\big)-[\U^*\f](\mu\hat{\xx})\Big) \\
& =  \sum_{\mu\in\Gamma}\overline{\xi(\mu)}
\sum_{\e\in\A_{\hat{\xx}}}\frac{m_0(\e)}{m_0(\hat{\xx})}\Big(\big[\U^*\f\big]\big(t(\mu\e)\big)-[\U^*\f](\mu\hat{\xx})\Big) \\
& =  \sum_{\e\in\A_{\hat{\xx}}}\frac{m_0(\e)}{m_0(\hat{\xx})} \sum_{\mu\in\Gamma}\overline{\xi(\mu)}\Big(\big[\U^*\f\big]\big(t(\mu\e)\big)-[\U^*\f](\mu\hat{\xx})\Big) \\
& = \sum_{\e\in\A_{\hat{\xx}}}\frac{m_0(\e)}{m_0(\hat{\xx})} \Big[\sum_{\mu\in\Gamma}\overline{\xi(\mu)}\big[\U^*\f\big]\big(t(\mu\e)\big)- \f(\xi,\xx)\Big].
\end{align*}
By observing that
\begin{equation}\label{eq_use_ind}
t(\mu\e) = [t(\mu\e)]\widehat{\omega(t(\mu \e))} = \mu \eta(\e) \widehat{\omega(t(\e))},
\end{equation}
one infers that
\begin{align*}
[\U \Delta\U^* \f](\xi,\xx)
&=\sum_{\e\in\A_{\hat{\xx}}}\frac{m_0(\e)}{m_0(\hat{\xx})} \Big[\sum_{\mu\in\Gamma}\overline{\xi(\mu)}\big[\U^*\f\big]\big(\mu \eta(\e) \widehat{\omega(t(\e))}\big)- \f(\xi,\xx)\Big] \\
&=\sum_{\e\in\A_{\hat{\xx}}}\frac{m_0(\e)}{m_0(\hat{\xx})} \Big[\xi\big(\eta(\e)\big)\f\big(\xi, \omega(t(\e))\big)- \f(\xi,\xx)\Big] \\
&=\sum_{\ee\in\A_{\xx}}\frac{m_0(\ee)}{m_0(\xx)} \Big[\xi\big(\eta(\ee)\big)\f\big(\xi, t(\ee)\big)- \f(\xi,\xx)\Big],
\end{align*}
where for the last equality one has used that $\omega\big(t(\hat{\ee})\big)=t(\ee)$.
The statement follows then by a density argument.
\end{proof}

In order to make the connection with magnetic Laplacian, let us recall that for any $\theta:\A(\XX)\to \T$
satisfying $\theta(\overline{\ee})=\overline{\theta(\ee)}$ one defines a magnetic Laplace operator on $\XX$ by the formula
\begin{equation*}
[\Delta_\theta(\XX,m_0) \varphi](\xx)=\sum_{\ee\in\A_\xx} \frac{m_0(\ee)}{m_0(\xx)}\big(\theta(\ee)\varphi(t(\ee))-\varphi(\xx)\big) \qquad \forall \varphi\in l^2(\XX).
\end{equation*}
Thus, if for fixed $\xi \in \hat{\Gamma}$ one sets
\begin{equation}\label{def_theta_xi}
\theta_\xi: \A(\XX) \to \T, \quad \theta_\xi(\ee):= \xi\big(\eta(\ee)\big),
\end{equation}
then one infers that
\begin{equation*}
\theta_\xi(\overline{\ee})=\xi\big(\eta(\overline{\ee})\big)=\xi\big(\eta(\ee)^{-1}\big) = \overline{\xi\big(\eta(\ee)\big)} = \overline{\theta_\xi(\ee)}.
\end{equation*}
As a consequence, the operator $\Delta_{\theta_\xi}(\XX,m_0)$ defined on any $\varphi\in l^2(\XX)$ by
\begin{align*}
[\Delta_{\theta_\xi}(\XX,m_0) \varphi](\xx)& =\sum_{\ee\in\A_\xx} \frac{m_0(\ee)}{m_0(\xx)}\big(\theta_\xi(\ee)\varphi(t(\ee))-\varphi(\xx)\big) \\
& =\sum_{\ee\in\A_\xx} \frac{m_0(\ee)}{m_0(\xx)}\big(\xi\big(\eta(\ee)\big)\varphi(t(\ee))-\varphi(\xx)\big)
\end{align*}
corresponds to a magnetic Laplace operator on $\XX$.

Let us now recall that $L^2\big(\hat{\Gamma};l^2(\XX)\big) = \int_{\hat{\Gamma}}^\oplus \d\xi\;\! l^2(\XX)$.
As a consequence of the previous lemma and of the construction made above, the operator $\U \Delta(X,m_0)\U^*$ itself can be identified with the direct integral operator
$\int_{\hat{\Gamma}}^\oplus\d\xi\;\! \Delta_{\theta_\xi}(\XX,m_0)$.
In other words, the Laplace operator $\Delta(X,m_0)$ is unitarily equivalent to a direct integral of magnetic Laplace operators acting on $\XX$.

In order to get a direct integral of magnetic Schr\"odinger operators as mentioned at the beginning of this section,
it only remains to deal with the multiplication operator $R_0$ by a $\Gamma$-periodic function, as introduced in \eqref{h0vertices}.
For that purpose, let us observe that for any real $\Gamma$-periodic function defined on $V(X)$ one can associate a well-defined function
on $V(\XX)$ by the relation $R_0(\xx):=R_0(\hat{\xx})$. For simplicity (and as already done before) we keep the
same notation for this new function.
Then the following statement is obtained by a direct computation.

\begin{Lemma}
Let $R_0$ be a $\Gamma$-periodic function on $V(X)$. Then one has $\U R_0 \U^* = R_0$, or more precisely
for any $\f \in L^2\big(\hat{\Gamma};l^2(\XX)\big)$, for all $\xx\in \XX$ and a.e. $\xi \in \hat{\Gamma}$ the following equality holds:
\begin{equation*}
[\U R_0 \U^*\f](\xi,\xx)=R_0(\xx)\f(\xi,\xx).
\end{equation*}
\end{Lemma}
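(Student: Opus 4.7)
The strategy is direct: invoke the explicit formulas for $\U$ and $\U^*$ introduced above, and use the $\Gamma$-periodicity of $R_0$ to pull the function out of the sum, reducing the statement to the Fourier inversion formula for $\Gamma$.

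Concretely, I would start with a vector $\f\in L^2\big(\hat{\Gamma};l^2(\XX)\big)$ whose inverse image $\U^*\f$ has finite support in $X$, so that all manipulations are a priori justified. For such $\f$, unwind the definition \eqref{def_de_U} and apply the multiplication operator $R_0$ pointwise:
\begin{equation*}
[\U R_0 \U^*\f](\xi,\xx) = \sum_{\mu\in\Gamma}\overline{\xi(\mu)}\;\!R_0(\mu\hat{\xx})\;\![\U^*\f](\mu\hat{\xx}).
\end{equation*}
Next, observe that the $\Gamma$-periodicity assumption on $R_0$ yields $R_0(\mu\hat{\xx}) = R_0(\hat{\xx}) = R_0(\xx)$ for every $\mu\in\Gamma$, so this scalar factors out of the sum in $\mu$. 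What remains inside is exactly $\sum_{\mu\in\Gamma}\overline{\xi(\mu)}[\U^*\f](\mu\hat{\xx}) = [\U\U^*\f](\xi,\xx) = \f(\xi,\xx)$, by the definition of $\U$ and the unitarity already established (equivalently, by the Fourier inversion formula for $\hat{\Gamma}$, noting $[\mu\hat{\xx}]=\mu$ and $\check{\mu\hat{\xx}}=\xx$). This gives the identity for a dense set of $\f$, and density together with the boundedness of the multiplication operator by $R_0$ (which is finite since $V(\XX)$ is finite) extends it to all of $L^2\big(\hat{\Gamma};l^2(\XX)\big)$.

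Conceptually, the cleanest way to state what is happening is that $\U$ decomposes as the identification $l^2(X)\cong l^2(\Gamma)\otimes l^2(\XX)$ followed by $\F\otimes\mathrm{id}$, and periodicity of $R_0$ means it corresponds to the operator $\mathrm{id}\otimes R_0$ on the tensor product; since $\F\otimes\mathrm{id}$ commutes with any operator of the form $\mathrm{id}\otimes A$, the conjugation leaves the multiplication operator unchanged. There is no genuine obstacle here: the only point requiring some care is to keep the identifications $\hat{\xx}$, $\check{x}$, and $[x]$ straight so that the periodicity is applied to the correct variable.
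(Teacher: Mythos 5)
Your proof is correct and matches the paper's approach exactly: the paper simply states that the lemma "is obtained by a direct computation," and your proof supplies precisely that computation — unwinding the definition of $\U$, using $\Gamma$-periodicity to pull the scalar $R_0(\mu\hat{\xx})=R_0(\xx)$ out of the sum, recognizing the remaining sum as $\f(\xi,\xx)$ via $\U\U^*=\mathrm{Id}$, and closing with a density argument. The tensor-product remark at the end is a nice conceptual summary but is not needed for the argument.
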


By adding the various results obtained in this section one can finally state:

\begin{Proposition}\label{prop_U H_0 U^*}
Let  $(X,\XX,\omega,\Gamma)$ be a topological crystal and let $m_0$ be a $\Gamma$-periodic measure on $X$.
Let $R_0$ be a real $\Gamma$-periodic function defined on $V(X)$. Then the periodic Schr\"odinger operator $H_0:=-\Delta(X,m_0)+R_0$
is unitarily equivalent to the direct integral of magnetic Schr\"odinger operators acting on $\XX$ defined by
\begin{equation*}
\int_{\hat{\Gamma}}^\oplus\d\xi\;\! \big[-\Delta_{\theta_\xi}(\XX,m_0) + R_0\big]
\end{equation*}
with $\theta_\xi$ defined in \eqref{def_theta_xi}.
\end{Proposition}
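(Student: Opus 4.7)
The plan is to combine the two preceding lemmas directly. Since $H_0=-\Delta(X,m_0)+R_0$ and conjugation by the unitary $\U$ is linear, we have
\[
\U H_0 \U^* = -\U\Delta(X,m_0)\U^* + \U R_0 \U^*.
\]
Each of these two summands has already been computed fiberwise. The first lemma gives, for $\f\in L^2\bigl(\hat{\Gamma};l^2(\XX)\bigr)$, almost every $\xi\in\hat\Gamma$, and every $\xx\in V(\XX)$,
\[
[\U\Delta(X,m_0)\U^*\f](\xi,\xx)=\sum_{\ee\in\A_{\xx}}\frac{m_0(\ee)}{m_0(\xx)}\bigl[\xi\bigl(\eta(\ee)\bigr)\f\bigl(\xi,t(\ee)\bigr)-\f(\xi,\xx)\bigr],
\]
while the second yields $[\U R_0 \U^*\f](\xi,\xx)=R_0(\xx)\f(\xi,\xx)$.

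The right-hand side of the first display is, by construction of $\theta_\xi$ in \eqref{def_theta_xi}, exactly the action on the fiber $\f(\xi,\cdot)\in l^2(\XX)$ of the magnetic Laplacian $\Delta_{\theta_\xi}(\XX,m_0)$. To upgrade this pointwise identity to a genuine direct integral decomposition on $L^2\bigl(\hat{\Gamma};l^2(\XX)\bigr)=\int_{\hat\Gamma}^\oplus\d\xi\,l^2(\XX)$, one checks that the family $\xi\mapsto\Delta_{\theta_\xi}(\XX,m_0)$ is weakly measurable and uniformly bounded in $\xi$. Both properties are immediate: $\XX$ being finite, $l^2(\XX)$ is finite-dimensional, and the matrix entries of $\Delta_{\theta_\xi}(\XX,m_0)$ depend continuously (indeed, via the characters $\xi\mapsto \xi(\mu)$, real-analytically after identifying $\hat\Gamma\cong\T^d$) on $\xi$; the uniform bound follows from $|\theta_\xi(\ee)|=1$ together with the boundedness of $\deg_{m_0}$ on the finite graph $\XX$. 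Therefore $\U\Delta(X,m_0)\U^*$ coincides with the decomposable operator $\int_{\hat\Gamma}^\oplus\d\xi\,\Delta_{\theta_\xi}(\XX,m_0)$.

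The second lemma similarly exhibits $\U R_0\U^*$ as the decomposable operator associated with the constant field $\xi\mapsto R_0$, where $R_0$ denotes multiplication by the function $R_0$ on $l^2(\XX)$ (well-defined on $\XX$ by $\Gamma$-periodicity). Adding the two decompositions yields
\[
\U H_0 \U^* = \int_{\hat\Gamma}^\oplus \d\xi\,\bigl[-\Delta_{\theta_\xi}(\XX,m_0)+R_0\bigr],
\]
which is the claim. There is essentially no hard step here: all the analytic content has been isolated in the two preceding lemmas, and the only remaining point is the elementary measurability check above, ensured by the finiteness of $\XX$ and the continuity of character evaluation on the compact Abelian group $\hat\Gamma$.
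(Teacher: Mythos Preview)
Your proof is correct and follows essentially the same approach as the paper: the proposition is presented there as an immediate consequence of the two preceding lemmas, obtained by summing the fiberwise identities for $\U\Delta(X,m_0)\U^*$ and $\U R_0\U^*$. Your additional measurability and uniform-boundedness check is a welcome clarification that the paper leaves implicit.
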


In the next section, we shall show that $H_0$ is in fact unitarily equivalent to an analytically fibered operator.

\section{Analyticity of the periodic operator}\label{section:analyticdecomposition}
\setcounter{equation}{0}

Before showing that the periodic operator $H_0$ is unitarily equivalent to an analytically fibered operator,
we shall recall a few definitions related to real analyticity as well as one version of the classical result on stratifications of Hironaka.
Doubtlessly, any reader familiar with real analyticity can skip Section \ref{subsec_anal}.
For that purpose, let us simply mention that for any topological space $\mathcal{X}$ and for any $\zeta\in \mathcal{X}$,
we shall denote by $\V_{\mathcal{X}}(\zeta)$ the set of all open neighborhoods of $\zeta$ in $\mathcal{X}$.

\subsection{A brief review of real analyticity}\label{subsec_anal}

For an open set $\UU$ in $\R^{n}$ we say that a
function $\varPhi$ defined on $\UU$, and taking values in $\R$ or $\C$, is \emph{real analytic on $\UU$} if it can be written locally as a convergent power series.
More precisely, $\varPhi$ is said to be real analytic on $\UU$ if for every $\zeta_0\in\UU$ there exists $\OO\in\V_{\UU}(\zeta_0)$ and a
(real or complex) sequence $\{a_\alpha\}_{\alpha\in\N^{n}}$ such that
\begin{equation*}
\varPhi(\zeta)=\sum_{\alpha\in\N^{n}}a_\alpha(\zeta-\zeta_0)^{\alpha}
\end{equation*}
for every $\zeta\in\OO$.
A vector-valued function is real analytic if each of its component is real analytic,
and analogously a matrix-valued function is real analytic if each of its entries is real analytic.
Clearly, real analyticity is preserved by the sum, the product, the quotient, and the composition of real analytic functions
when these operations are well-defined \cite[Propositions 2.2.2 \& 2.2.8]{KP02}.

Let us now recall that a \emph{real analytic manifold} $\MM$ of dimension $n$ is a smooth manifold such
that each transition function is real analytic. More precisely, if $\{(\OO_j,\varPhi_j)\}$ is an atlas for $\MM$,
then the maps $\varPhi_j\circ\varPhi_k^{-1}:\varPhi_k(\OO_j\cap \OO_k)\to \varPhi_j(\OO_j\cap \OO_k)$
are real analytic maps.
In this setting, a function $\Psi:\MM\to \R$ is said to be \emph{real analytic at $p\in \MM$} if for $j$ such that $p\in \OO_j$
the function $\Psi\circ \varPhi_j^{-1}$ is real analytic at $ \varPhi_j(p)$.
The function $\Psi$ is \emph{real analytic on $\MM$} if it is real analytic at every points of $\MM$.

Let us also recall the notion of semi-analytic subset and the more general notion of
subanalytic subsets. The following definitions are borrowed from sections 2 and 3 of \cite{BM}.

\begin{Definition}\label{defsemi}
Let $\MM$ be a real analytic manifold.
A subset $\SS\subset \MM$ is said to be \emph{semi-analytic} if for every $p\in \SS$ there exist $\OO\in \V_\MM(p)$
and a finite family $\{\Psi_{j\ell}\}_{j,\ell}$ of real analytic functions defined on $\OO$ such that
\begin{equation*}
\SS\cap \OO=\bigcup_{j}\bigcap_\ell \big\{x\in \OO\mid \Psi_{j\ell}(x)\bowtie_{j\ell} 0 \text{ with }\bowtie_{j\ell}\in\{>,=\}\big\}.
\end{equation*}
\end{Definition}
Let us stress  that a semi-analytic subset need not be a real analytic submanifold.

\begin{Definition}\label{defsuba}
Let $\MM$ be a real analytic manifold.
A subset $\SS\subset \MM$ is said to be \emph{subanalytic} if for every $p\in \SS$ there exist $\OO\in \V_\MM(p)$
and an additional real analytic manifold $\NN$ such that $\SS\cap \OO$ is the image of a relatively compact semi-analytic
subset of $\MM\times \NN$ under the projection onto the first factor.
\end{Definition}

In this context, the following definition of stratification can be recalled, see for example
\cite[Sec.~2]{BM} and \cite[Def.~III.1.6]{Delort}.

\begin{Definition}\label{defstrat}
A \emph{stratification} of a real analytic manifold $\MM$ is a partition
$\SSS:=\{\SS_\alpha\}_\alpha$ of $\MM$ satisfying the following conditions:
\begin{enumerate}
\item[(i)] Each $\SS_\alpha$ is a connected subset of $\MM$ and a real analytic submanifold of $\MM$,
\item[(ii)] $\SSS$ are locally finite at any point of $\MM$,
\item[(iii)] If $\SS_{\alpha}\cap\overline{\SS_{\beta}}\ne\emptyset$ then $\SS_{\alpha}\subset\overline{\SS_{\beta}}$.
\end{enumerate}
If each $\SS_\alpha$ is semi-analytic the stratification is called \emph{semi-analytic}, while if
each $\SS_\alpha$ is subanalytic the stratification is called \emph{subanalytic}.
\end{Definition}

If $\MM$ is already endowed with a locally finite family $\{\MM_j\}_j$ of subsets, one
says that the stratification $\SSS$ of $\MM$ is \emph{compatible with $\{\MM_j\}_j$} if for every $j$ and every $\alpha$
one has either $\SS_\alpha \cap \MM_j=\emptyset$ or $\SS_\alpha\subset \MM_j$.
As shown in \cite[Corol.~2.11]{BM}, given a locally finite family of semi-analytic sets on $\MM$,
there always exists a semi-analytic stratification of $\MM$ which is compatible with this family. However,
this result is not strong enough for our purpose, since one more ingredient is necessary.

\begin{Definition}
Let $\MM,\MM'$ be two real analytic manifolds, and let $f:\MM\to \MM'$ be a real analytic map.
A \emph{(semi-analytic or subanalytic) stratification} for $f$ is a pair $(\SSS,\SSS')$ of (semi-analytic or subanalytic) stratifications of $\MM$ and $\MM'$ respectively
such that for any $\SS_\alpha \in \SSS$ one has $f(\SS_\alpha)\in \SSS'$ and the rank of the Jacobian matrix of $f$ at any point of $\SS_\alpha$
is equal to the dimension of $f(\SS_\alpha)$.
\end{Definition}

We can now state the version of the theorem of stratification of Hironaka as presented in \cite[Thm.~III.1.8]{Delort},
see also \cite[Corol.~4.4]{Hardt}, \cite[Sec.~3]{H77}.
Note that we directly impose a stronger condition on $f$ since it simplifies the statement
and since this condition will be automatically satisfied in our application.

\begin{Theorem}\label{thm_Hironaka}
Let $\MM,\MM'$ be two real analytic manifolds, and let $f:\MM\to \MM'$ be a proper real analytic map.
Suppose we are given finitely many subanalytic sets $\MM_j\subset \MM$, and
finitely many subanalytic sets $\MM_k'\subset \MM'$.
Then there exists a subanalytic stratification $(\SSS,\SSS')$ of $f$
such that $\SSS$ is compatible with $\{\MM_j\}$ and $\SSS'$ is compatible with $\{\MM_k'\}$.
\end{Theorem}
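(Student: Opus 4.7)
The plan is to treat this as a citation from the classical theory of subanalytic sets rather than to reprove it. The statement is essentially Hironaka's stratification theorem for proper real analytic maps, and in a paper whose focus is spectral theory it is best imported as a black box from \cite{Delort,Hardt,H77}. All the hypotheses here---real analyticity of the ambient manifolds and of $f$, properness of $f$, subanalyticity and finiteness of the two families $\{\MM_j\}$ and $\{\MM_k'\}$---are exactly those under which the theorem is usually formulated, so apart from matching notation nothing further is required.

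If one wished to sketch the argument from first principles, it would proceed in three stages. First, one enlarges the family in the source by adjoining $\{f^{-1}(\MM_k')\}_k$; these remain subanalytic since $f$ is proper and real analytic, and the class of subanalytic sets is preserved under proper preimages and finite Boolean operations. Second, one invokes the existence of a subanalytic stratification of $\MM$ compatible with this enlarged finite family, which is the subanalytic analogue of the Bierstone--Milman result \cite[Corol.~2.11]{BM} and is established by induction on dimension: every subanalytic set decomposes into a smooth open dense subanalytic submanifold together with a lower-dimensional subanalytic remainder. Third, one refines this provisional stratification so that the rank of $df$ is constant on each stratum; on any candidate stratum the locus where $\rank(df|_{\SS_\alpha})$ drops is again subanalytic of strictly lower dimension and can be pushed into lower-dimensional strata. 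The target stratification $\SSS'$ is then defined by taking connected components of the images $f(\SS_\alpha)$ and refining further so as to be compatible with $\{\MM_k'\}$.

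The principal obstacle in running such an argument directly is maintaining all three compatibility conditions simultaneously: compatibility of $\SSS$ with $\{\MM_j\}$, compatibility of $\SSS'$ with $\{\MM_k'\}$, and the rank condition $f(\SS_\alpha)\in\SSS'$ with $\rank(df|_{\SS_\alpha})=\dim f(\SS_\alpha)$. Any refinement on one side tends to disturb the others, and one must set up a careful back-and-forth inductive scheme in which properness of $f$ is used to guarantee that images of subanalytic strata remain subanalytic and that the refinement stabilizes locally after finitely many steps. These delicate technicalities are precisely what make Hironaka's proof nontrivial, and since the required form of the theorem is already available in the literature, my proposal is to record the statement as above and refer the reader to \cite{Delort,Hardt,H77} for the proof.
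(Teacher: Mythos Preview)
Your proposal is correct and matches the paper's own treatment exactly: the paper does not prove this theorem but simply records it as the version of Hironaka's stratification theorem presented in \cite[Thm.~III.1.8]{Delort}, with additional references to \cite[Corol.~4.4]{Hardt} and \cite[Sec.~3]{H77}. Your additional sketch of how one would argue from first principles is a helpful supplement, but the paper itself offers no such outline and treats the result purely as a black box.
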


\subsection{Analytic decomposition of the periodic operator}

We shall now show that $H_0$ is unitarily equivalent to an \emph{analytically fibered operator}.
We refer to \cite{GN98} and \cite[Sec.~XIII.16]{RS4} for more general information on such operators,
and restrict ourselves to the simplest framework. In that respect, the next definition is adapted to our setting.
Note that from now on we shall use the notation $\T^d$  for the $d$-dimensional (flat) torus, \ie~for $\T^d=\R^{d}/\Z^{d}$,
with the inherited local coordinates system and differential structure.
We shall also use the notation $M_n(\C)$ for the $n\times n$ matrices over~$\C$.

\begin{Definition}\label{analiticallyfibered}
In the Hilbert space $L^2(\T^d;\C^n)$, a \emph{bounded analytically fibered operator}
corresponds to a multiplication operator defined by a real analytic map $h :\T^d\to M_n(\C)$.
\end{Definition}

In order to show that the periodic operator introduced in Section \ref{sec_H_0} fits into this framework,
some identifications are necessary.
More precisely, since $\Gamma$ is isomorphic to $\Z^d$, as stated in the point (iv) of Definition \ref{topocrystal},
we know that $\hat{\Gamma}$ is isomorphic to $\T^d$. In fact, we consider that a basis of $\Gamma$ is chosen
and then identify $\Gamma$ with $\Z^{d}$, and accordingly $\hat{\Gamma}$ with $\T^d$.
As a consequence of these identifications we shall write $\xi(\mu)=e^{2\pi i\,\xi\cdot\mu}$,
where $\xi\cdot\mu = \sum_{j=1}^d\xi_j\mu_j$.
Accordingly, the Fourier transform defined in \eqref{def_Fourier} corresponds to $[\F f](\xi)\equiv \hat{f}(\xi)=\sum_{\mu\in \Z^d} e^{-2\pi i\,\xi\cdot\mu} f(\mu)$,
and its inverse to $[\F^* u](\mu)\equiv \check{u}(\mu)=\int_{\T^d}\d\xi\;\! e^{2\pi i\,\xi\cdot \mu}u(\xi)$, with $\d \xi$ the usual measure on  $\T^d$.
Note that an other consequence of this identification is the use of the additive notation for the composition of two elements of $\Z^d$,
instead of the multiplicative notation employed until now for the composition in $\Gamma$.

The second necessary identification is between $l^2(\XX)$ and $\C^n$. Indeed, since $V(\XX)= \{\xx_1,\dots,\xx_n\}$, as already mentioned in the previous section,
the vector space $l^2(\XX)$ is of dimension $n$. However, since the scalar product in $l^2(\XX)$ is defined with the measure $m_0$ while
$\C^n$ is endowed with the standard scalar product, one more unitary transformation has to be defined. More precisely,
for any $\varphi\in l^2(\XX)$ one sets
$\I:l^2(\XX)\to \C^n$ with
\begin{equation}\label{def_de_I}
\I \varphi =\big(m_0(\xx_1)^{\frac12}\varphi(\xx_1), m_0(\xx_2)^{\frac12}\varphi(\xx_2),\dots,m_0(\xx_n)^{\frac12}\varphi(\xx_n)\big).
\end{equation}
This map defines clearly a unitary transformation between $l^2(\XX)$ and $\C^n$.
Note that we shall use the same notation $\I$ for the map $L^2\big(\T^d;l^2(\XX)\big)\to L^2(\T^d;\C^n)$ acting trivially on the first variables
and acting as above on the remaining variables.

We can now state and prove the main result of this section, where we use the usual notation $\delta_{j\ell}$ for the Kronecker delta function.

\begin{Proposition}\label{prop_def_H}
Let  $(X,\XX,\omega,\Gamma)$ be a topological crystal and let $m_0$ be a $\Gamma$-periodic measure on $X$.
Let $R_0$ be a real $\Gamma$-periodic function defined on $V(X)$.
Then the periodic Schr\"odinger operator $H_0:=-\Delta(X,m_0)+R_0$
is unitarily equivalent to the bounded analytically fibered operator in $L^2(\T^d;\C^n)$
defined by the function $h_0:\T^d\to M_n(\C)$ with
\begin{equation}\label{def_h}
h_0(\xi)_{j\ell}:=-\sum_{\ee=(\xx_j,\xx_\ell)}\frac{m_0(\ee)}{m_0(\xx_j)^{\frac{1}{2}}\;\!m_0(\xx_\ell)^{\frac{1}{2}}}
\;\! e^{2\pi i\,\xi\cdot\eta(\ee)} + \big(\deg_{m_0}(\xx_j) + R_0(\xx_j)\big)\delta_{j\ell}
\end{equation}
for any $\xi \in \T^d$ and $j,\ell\in \{1,\dots,n\}$.
\end{Proposition}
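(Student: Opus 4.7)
The plan is to combine Proposition~\ref{prop_U H_0 U^*} with the fiberwise unitary $\I$ defined in \eqref{def_de_I}, together with the identifications $\Gamma\cong \Z^d$ and $\hat\Gamma\cong\T^d$ fixed by the choice of a basis of $\Gamma$.

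First, I invoke Proposition~\ref{prop_U H_0 U^*} to identify $\U H_0\U^*$ with the direct integral $\int_{\hat\Gamma}^{\oplus}\d\xi\,\bigl[-\Delta_{\theta_\xi}(\XX,m_0)+R_0\bigr]$ on $L^2\bigl(\hat\Gamma;l^2(\XX)\bigr)$. Under the identifications above, every character takes the form $\xi(\mu)=e^{2\pi i\,\xi\cdot\mu}$, the Haar measure on $\hat\Gamma$ becomes the standard Lebesgue measure on $\T^d$, and $\theta_\xi(\ee) = e^{2\pi i\,\xi\cdot\eta(\ee)}$. Then I conjugate each fiber by the unitary $\I:l^2(\XX)\to\C^n$, extended fiberwise to a unitary $L^2\bigl(\T^d;l^2(\XX)\bigr)\to L^2(\T^d;\C^n)$; the result is necessarily a multiplication operator by a function $h_0:\T^d\to M_n(\C)$ since $\C^n$ is finite-dimensional.

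The key computation is to identify the matrix entries $h_0(\xi)_{j\ell}$. Given $\psi\in\C^n$, set $\varphi=\I^{-1}\psi\in l^2(\XX)$, so that $\varphi(\xx_k)=m_0(\xx_k)^{-1/2}\psi_k$. Then the $j$-th component of $\I\bigl[-\Delta_{\theta_\xi}(\XX,m_0)+R_0\bigr]\varphi$ is
\[
m_0(\xx_j)^{1/2}\Bigl[-\sum_{\ee\in\A_{\xx_j}}\tfrac{m_0(\ee)}{m_0(\xx_j)}\bigl(\theta_\xi(\ee)\varphi(t(\ee))-\varphi(\xx_j)\bigr) + R_0(\xx_j)\varphi(\xx_j)\Bigr].
\]
Splitting the sum over $\ee\in\A_{\xx_j}$ according to the terminal vertex $t(\ee)=\xx_\ell$, substituting $\varphi(t(\ee))=m_0(\xx_\ell)^{-1/2}\psi_\ell$ and $\theta_\xi(\ee)=e^{2\pi i\,\xi\cdot\eta(\ee)}$, and using $\sum_{\ee\in\A_{\xx_j}}\tfrac{m_0(\ee)}{m_0(\xx_j)}=\deg_{m_0}(\xx_j)$ for the diagonal contribution, one reads off exactly the entries in \eqref{def_h}.

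Finally, real analyticity of $h_0:\T^d\to M_n(\C)$ is immediate: each entry is a finite sum of exponentials $e^{2\pi i\,\xi\cdot\eta(\ee)}$ plus a constant, hence manifestly real analytic in the inherited local coordinates; boundedness is automatic by compactness of $\T^d$. I do not expect any serious obstacle. The only subtlety is the book-keeping of the weights: the factor $m_0(\xx_j)^{1/2}$ coming from $\I$ combines with $m_0(\xx_j)^{-1}$ from the Laplacian and $m_0(\xx_\ell)^{-1/2}$ from $\I^{-1}$ to produce the symmetric off-diagonal weight $m_0(\xx_j)^{-1/2}\,m_0(\xx_\ell)^{-1/2}$, while grouping the compensating $-\varphi(\xx_j)$ term in the Laplacian with the $R_0$ term produces the diagonal $(\deg_{m_0}(\xx_j)+R_0(\xx_j))\delta_{j\ell}$. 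As a consistency check, one sees from $m_0(\bar\ee)=m_0(\ee)$ and $\eta(\bar\ee)=-\eta(\ee)$ that $h_0(\xi)$ is Hermitian, as it must be.
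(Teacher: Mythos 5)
Your proposal is correct and follows essentially the same route as the paper: use Proposition~\ref{prop_U H_0 U^*} to get the direct integral of magnetic Schr\"odinger operators, conjugate fiberwise by $\I$ under the identifications $\hat\Gamma\cong\T^d$ and $\xi(\mu)=e^{2\pi i\,\xi\cdot\mu}$, and read off \eqref{def_h}, with analyticity coming from the finite sum of exponentials. The only difference is that you write out explicitly the weight bookkeeping that the paper's proof leaves as "easily computed."
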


\begin{proof}
The proof consists simply in computing the operator $\I \U H_0 \U^* \I^*$, and in checking that
the resulting operator is analytically fibered.
Observe first that the product $\U H_0 \U^*$ has already been computed in Proposition \ref{prop_U H_0 U^*}.
The conjugation with $\I$ is easily computed, and one directly obtains \eqref{def_h} if one takes
the equality $\xi(\mu)= e^{2\pi i\,\xi\cdot \mu}$ into account.
Since for each fixed $\mu \in \Z^d$ the map $\T^d\ni \xi \mapsto e^{2\pi i\,\xi\cdot \mu}\to \C$ is real analytic,
the matrix-valued function defined by $h_0$ is real analytic.
\end{proof}

\section{Mourre theory and the conjugate operator}\label{sec_Mourre}
\setcounter{equation}{0}

In this section we first recall some definitions related to Mourre theory, such as some
regularity conditions as well as the meaning of a Mourre estimate.
These notions will be used in the second part of the section where a conjugate operator for $H_0$
will be constructed.
Again, any reader familiar with the conjugate operator method can skip Section \ref{subsec_Mourre}
and directly start with Section \ref{subsec_A}.

\subsection{Mourre theory}\label{subsec_Mourre}

In this section we provide the strictly necessary notions for our purpose,
and refer to \cite[Sec.~7.2]{ABG96} for more information and details.

Let us consider a Hilbert space $\H$ with scalar product
$\langle\;\!\cdot\;\!,\;\!\cdot\;\!\rangle$ and norm $\|\;\!\cdot\;\!\|$. Let
also $S$ and $A$ be two self-adjoint operators in $\H$. The operator $S$ is assumed to be bounded,
and we write $\D(A)$ for the domain of $A$. The spectrum of $S$ is denoted by $\sigma(S)$ and its spectral measure by
$E_S(\;\!\cdot\;\!)$. For shortness, we also use the notation
$E_S(\lambda;\varepsilon):=E_S\big((\lambda-\varepsilon,\lambda+\varepsilon)\big)$
for all $\lambda\in\R$ and $\varepsilon>0$.

The operator $S$ belongs to $C^1(A)$ if the map
\begin{equation}\label{C1}
\R\ni t\mapsto e^{-itA}S e^{itA}\in\B(\H)
\end{equation}
is strongly of class $C^1$ in $\H$. Equivalently, $S\in C^1(A)$ if the quadratic form
\begin{equation*}
\D(A)\ni\varphi
\mapsto\langle iA\varphi,S^*\varphi\rangle -\langle iS\varphi,A\varphi\rangle \in \C
\end{equation*}
is continuous in the topology of $\H$.
In such a case, this form extends uniquely to a continuous form on $\H$, and the corresponding bounded self-adjoint operator
is denoted by $[iS,A]$.
This $C^1(A)$-regularity of $S$ with respect to $A$ is the basic ingredient for any
investigation in Mourre theory.

Let us also define some stronger regularity conditions. First of all, $S\in C^2(A)$ if the map \eqref{C1}
is strongly of class $C^2$ in $\H$. A weaker condition can be expressed as follows:
$S\in C^{1,1}(A)$ if
\begin{equation*}
\int_0^1  \frac{\d t}{t^2}\;\!\big\| e^{-itA}S e^{itA}+ e^{itA}S e^{-itA}-2S\big\|<\infty.
\end{equation*}
It is then well-known that the following inclusions hold: $C^2(A)\subset C^{1,1}(A) \subset C^1(A)$.

For any $S\in C^1(A)$, let us now introduce two subsets of $\R$ which will play a central role.
Namely, one sets
\begin{equation*}
\mu^A(S):=\big\{\lambda \in \R\mid \exists \varepsilon>0, a>0 \hbox{ s.t. } E_S(\lambda;\varepsilon)[iS,A]E_S(\lambda;\varepsilon)\geq a E_S(\lambda;\varepsilon) \big\}
\end{equation*}
as well as the larger subset of $\R$ defined by
\begin{align*}
\tilde{\mu}^A(S):=\big\{\lambda \in \R\mid & \exists \varepsilon>0, a>0, K\in \K(\H) \hbox{ s.t. }  \\
& \quad E_S(\lambda;\varepsilon)[iS,A]E_S(\lambda;\varepsilon)\geq a E_S(\lambda;\varepsilon)+K \big\}.
\end{align*}

In order to state one of the main results in Mourre theory, let us still set
$\KK:=\big(\D(A),\H\big)_{\frac{1}{2},1}$ for the Banach space obtained by real interpolation.
We refer to \cite[Sec.~3.4]{ABG96} for more information about this space and for a general
presentation of Besov spaces associated with the pair $\big(\D(A),\H\big)$.
Since $\B(\H)\subset \B(\KK,\KK^*)$, for any $z\in \C\setminus \R$ the resolvent $(S-z)^{-1}$ of $S$ belongs to
these spaces, and the following extension holds:

\begin{Theorem}[{\cite[Theorem 7.3.1.]{ABG96}}]\label{mourreabg}
Let $S$ be a self-adjoint element of $\B(\H)$ and assume that $S\in C^{1,1}(A)$.
Then the holomorphic function $\C_{\pm}\ni z\to (S-z)^{-1}\in B(\KK,\KK^{*})$
extends to a weak$^{*}$ continuous function on $\C_\pm\cup \mu^A(S)$.
\end{Theorem}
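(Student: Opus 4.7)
The plan is to establish a limiting absorption principle at energies in $\mu^A(S)$ via the positive-commutator method, with the refined $C^{1,1}(A)$ regularity controlling the modulus of continuity of the boundary values of $(S-z)^{-1}$.

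First I would localize spectrally. Given $\lambda_0\in \mu^A(S)$, pick a small open interval $J\ni \lambda_0$ and $a>0$ such that the strict Mourre inequality $E_S(J)\;\![iS,A]\;\!E_S(J)\geq a\,E_S(J)$ holds, together with $\varphi\in C_c^\infty(J)$ with $\varphi\equiv 1$ near $\lambda_0$. Set $M:=\varphi(S)[iS,A]\varphi(S)\geq a\,\varphi(S)^2$, a bounded nonnegative operator thanks to $S\in C^1(A)$. For $z$ in a small complex neighborhood of $\lambda_0$ with $\im z\geq 0$ and $\varepsilon\in (0,1]$, introduce the regularized resolvent
$$G_\varepsilon(z):=(S-z-i\varepsilon M)^{-1},$$
which makes sense in $B(\H)$ because $\im z + \varepsilon M \succeq 0$. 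The target is a uniform bound on $G_\varepsilon(z)$ in $B(\KK,\KK^*)$ together with a control on its modulus of continuity in $z$, both up to the boundary $\varepsilon=0$, $\im z = 0$.

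Second I would run the standard Mourre differential inequality. Differentiating gives $\partial_\varepsilon G_\varepsilon(z)=i\,G_\varepsilon(z)\,M\,G_\varepsilon(z)$; sandwiching with $f,g\in \D(A)$, commuting $A$ through $G_\varepsilon(z)$ (legitimate thanks to $S\in C^1(A)$), and using the lower bound $M\geq a\varphi(S)^2$ to absorb the diagonal contribution, one deduces for $F(\varepsilon):=\langle f,G_\varepsilon(z)g\rangle$ a Gronwall-type estimate roughly of the form $|F'(\varepsilon)|\leq C\,\varepsilon^{-1/2}\,\|f\|_\KK\,\|g\|_\KK\,\sqrt{|F(\varepsilon)|}+(\text{integrable remainder})$. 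Integrating from $\varepsilon=1$, where $G_1(z)$ is trivially controlled, back to $\varepsilon\to 0^+$ then yields $\sup_{\varepsilon>0}\|G_\varepsilon(z)\|_{B(\KK,\KK^*)}<\infty$ uniformly for $z$ in a neighborhood of $\lambda_0$ in $\overline{\mathbb{C}_+}$.

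Third, the $C^{1,1}(A)$ condition $\int_0^1 t^{-2}\|e^{-itA}Se^{itA}+e^{itA}Se^{-itA}-2S\|\,dt<\infty$ is precisely the Dini-type hypothesis that upgrades the previous uniform bound into a H\"older-$\tfrac12$ modulus of continuity for $z\mapsto G_\varepsilon(z)$ in the $B(\KK,\KK^*)$-norm up to the real axis. This is where the real interpolation space $\KK=(\D(A),\H)_{1/2,1}$ is crucial: its norm is exactly tailored to convert the above integral hypothesis on the second symmetric difference of $e^{-itA}Se^{itA}$ into continuity of the boundary values. Letting $\varepsilon\to 0^+$ with this equicontinuity produces the weak-$*$ continuous extension of $z\mapsto (S-z)^{-1}$ from $\mathbb{C}_+$ to $\mathbb{C}_+\cup \mu^A(S)$ valued in $B(\KK,\KK^*)$; the case of $\mathbb{C}_-$ is symmetric.

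The main obstacle is the third step: matching the $C^{1,1}(A)$ Dini integral with the specific real-interpolation norm of $\KK$ so that the differential inequality integrates to a finite bound \emph{up to} the real axis, which is exactly what would fail under the weaker hypothesis $S\in C^1(A)$ alone (one would then obtain only uniform boundedness on $\mathbb{C}_\pm$ without a continuous extension). Since the result is precisely Theorem~7.3.1 of \cite{ABG96}, in the paper I would invoke it directly rather than reprove it.
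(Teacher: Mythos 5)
The paper gives no proof of this statement: it quotes Theorem 7.3.1 of \cite{ABG96} verbatim as a black box (the theorem's label in the paper is literally the citation). Your proposal's closing decision to invoke the reference directly is exactly what the paper does, and your outline of the differential-inequality / limiting absorption argument underlying the reference is a reasonable high-level account of why the $C^{1,1}(A)$ hypothesis and the Besov space $\KK$ are the right pair of ingredients.
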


Let us still mention how a perturbative scheme can be developed.
Consider a ``perturbation'' $V\in \K(\H)$ and assume that $V$ is self-adjoint and belongs to $C^{1,1}(A)$ as well.
Even if $\mu^A(S)$ is known, it usually quite difficult to compute the corresponding set
$\mu^A(S+V)$ for the self-adjoint operator $S+V$.
However, the set $\tilde{\mu}^A(S)$ is much more stable since $\tilde{\mu}^A(S)=\tilde{\mu}^A(S+V)$,
as a direct consequence of \cite[Thm.~7.2.9]{ABG96}.

Based on this observation, the following adaptation of \cite[Thm.~7.4.2]{ABG96} can be stated in our context:

\begin{Theorem}\label{mourreap}
Let $S$ be a self-adjoint element of $\B(\H)$ and assume that $S\in C^{1,1}(A)$.
Let $V\in \K(\H)$ and assume that $V$ is self-adjoint and belongs to $C^{1,1}(A)$.
Then, for any closed interval $I\subset \tilde{\mu}^A(S)$ the operator $S+V$ has at most
a finite number of eigenvalues in $I$, and no singular continuous spectrum in $I$.
\end{Theorem}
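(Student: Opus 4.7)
The plan is to combine three ingredients: the invariance of $\tilde{\mu}^A$ under the perturbation $V$ (already stated in the text just before the theorem), the Virial theorem controlling the point spectrum via the Mourre estimate, and Theorem \ref{mourreabg} applied to $S+V$ to rule out the singular continuous spectrum.

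First, since $C^{1,1}(A)$ is a vector space, $S+V$ belongs to $C^{1,1}(A)$. Invoking the stability result $\tilde{\mu}^A(S)=\tilde{\mu}^A(S+V)$ mentioned before the statement, the hypothesis $I\subset\tilde{\mu}^A(S)$ becomes $I\subset\tilde{\mu}^A(S+V)$, so around each $\lambda\in I$ one obtains
\begin{equation*}
E_{S+V}(\lambda;\varepsilon)[i(S+V),A]E_{S+V}(\lambda;\varepsilon)\geq a\, E_{S+V}(\lambda;\varepsilon)+K
\end{equation*}
for some $\varepsilon,a>0$ and some compact self-adjoint $K$ depending on $\lambda$.

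Next I would show that $S+V$ has only finitely many eigenvalues in $I$, each of finite multiplicity. By the Virial theorem (applicable since $S+V\in C^1(A)$), any eigenvector $\psi$ of $S+V$ satisfies $\langle\psi,[i(S+V),A]\psi\rangle=0$. Testing the Mourre estimate above on such $\psi$ with eigenvalue in $(\lambda-\varepsilon,\lambda+\varepsilon)$ gives $\langle\psi,K\psi\rangle\leq -a\|\psi\|^2$; an infinite orthonormal sequence of such eigenvectors would converge weakly to zero, in contradiction with the compactness of $K$. Since $I$ is compact and can be covered by finitely many such Mourre windows, the claim follows.

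Finally, to exclude singular continuous spectrum, I would apply Theorem \ref{mourreabg} to $S+V$; this requires upgrading to a strict Mourre estimate on $I\setminus\sigma_{pp}(S+V)$. The standard trick is that if $\lambda\in\tilde{\mu}^A(S+V)$ is not an eigenvalue, then $E_{S+V}(\lambda;\varepsilon')\to 0$ strongly as $\varepsilon'\to 0$, so by compactness of $K$ the norm $\|E_{S+V}(\lambda;\varepsilon')KE_{S+V}(\lambda;\varepsilon')\|$ tends to zero and the compact remainder can be absorbed into half of the strictly positive part for $\varepsilon'$ small enough. This places $I\setminus\sigma_{pp}(S+V)$ inside $\mu^A(S+V)$, so Theorem \ref{mourreabg} supplies the limiting absorption principle there; the standard consequence is absence of singular continuous spectrum on $I\setminus\sigma_{pp}(S+V)$, hence on $I$ (since the point spectrum contributes nothing to $\sigma_{sc}$). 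The main obstacle I expect is this third step, specifically the careful upgrade from the Mourre estimate with compact remainder to a strict Mourre estimate outside the point spectrum; the first two steps are direct applications of the cited stability result and of the Virial theorem.
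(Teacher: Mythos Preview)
The paper does not actually prove Theorem~\ref{mourreap}; it is stated without proof as an adaptation of \cite[Thm.~7.4.2]{ABG96}, so there is no ``paper's own proof'' to compare against. Your proposal reconstructs precisely the standard argument underlying that reference: stability of $\tilde{\mu}^A$ under compact $C^{1,1}(A)$ perturbations, the Virial theorem combined with compactness of $K$ to bound the point spectrum, and the removal of the compact remainder outside the (finite) point spectrum to feed into Theorem~\ref{mourreabg}. This is correct and is essentially how \cite[Sec.~7.2--7.4]{ABG96} proceeds, so your sketch is an accurate expansion of what the paper takes for granted.
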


Let us finally mention that under additional condition on the perturbation $V$,
information on the local wave operators can be deduced. We shall come back on
this topic later on.

\subsection{The conjugate operator}\label{subsec_A}

In this section, we construct a conjugate operator for a self-adjoint bounded analytically fibered operator $h$ in $L^2(\T^d;\C^n)$.
At the end of the day, the operator $h$ will be the operator $h_0$ introduced in Proposition \ref{prop_def_H},
but we prefer to provide an abstract construction. Note that the following content
is inspired from an analog construction of \cite{GN98}. However, our setting is slightly simpler,
and in addition we provide here much more details.

Let us recall that a self-adjoint bounded analytically fibered operator corresponds to a multiplication operator
by a real analytic function $h:\T^d\to M_n(\C)$ with $h(\xi)$ Hermitian for any $\xi\in \T^d$.
For consistency, the multiplication operator will also be denoted by $h$.
For such an operator we introduce some notations.
For any Borel set $\V\subset\R$ and any $\xi\in\T^{d}$, let us denote by $\pi_{\V}(\xi)$ the spectral projection $E_{h(\xi)}(\V)$,
\ie~the projection in $\C^{n}$ onto the vector space generated by eigenvectors associated with the eigenvalues of $h(\xi)$ that lie in $\V$.
We also recall that $\sigma\big(h(\xi)\big)$ denotes the set of eigenvalues of $h(\xi)$.
Furthermore, we set:
\begin{itemize}
\item $\Sigma:=\big\{(\lambda,\xi)\in\R\times\T^d,\lambda\in\sigma\big(h(\xi)\big)\big\}$ ,
\item $\mul:\R\times\T^d\to\N$ defined by $(\lambda,\xi)\to\dim \pi_{\{\lambda\}}(\xi)\C^n$ ,
\item $\Sigma_j:=\{(\lambda,\xi)\in\R\times\T^d,\mul(\lambda,\xi)=j\}$ for any $j\in\{0,1\dots,n\}$.
\end{itemize}

The set $\Sigma$ is called the \emph{Bloch variety} (or the set of energy-momentum) of $h$ and will be the central object of this section.
We also denote by $p_{\R}:\Sigma\to\R$ and $p_{\T^d}:\Sigma\to\T^{d}$ the projection on each coordinate of $\Sigma$.
Some properties of $h$ and the above related objects are gathered in the next lemma.
We also refer to {\cite[Lemma 3.4]{GN98}} for a similar statement in a more general setting.

\begin{Lemma}\label{vecindad}
The application $\mul:\R\times \T^d\to\N$ is upper semicontinuous.
Furthermore, for all $(\lambda_0,\xi_0)\in\R\times \T^d$, there exist an interval $I_0\in\V_\R(\lambda_0)$ and
$\TT_0\in \V_{\T^d}(\xi_0)$ such that:
\begin{enumerate}
\item[(i)] $\pi_{I_0}(\xi_0)=\pi_{\{\lambda_0\}}(\xi_0)$,
\item[(ii)] The map $\xi\to \pi_{I_0}(\xi)\in M_n(\C)$ is real analytic in $\TT_0$.
\end{enumerate}
\end{Lemma}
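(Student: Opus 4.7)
The plan is to base the entire proof on the Riesz projection construction, which realizes $\pi_{I_0}(\xi)$ as a contour integral of the resolvent of $h(\xi)$; real analyticity of $h$ then delivers all three assertions together.

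Fix $(\lambda_0,\xi_0)\in\R\times\T^d$. Since $h(\xi_0)\in M_n(\C)$ is Hermitian, $\sigma\big(h(\xi_0)\big)$ consists of at most $n$ real points, so one may choose $\delta>0$ small enough that the closed disk $\overline{D(\lambda_0,\delta)}\subset\C$ meets $\sigma\big(h(\xi_0)\big)$ either at the single point $\lambda_0$ (if $\lambda_0$ is actually an eigenvalue) or not at all. Set $I_0:=(\lambda_0-\delta,\lambda_0+\delta)$ and let $\Gamma:=\partial D(\lambda_0,\delta)$ be positively oriented; by construction $\Gamma$ lies in the resolvent set of $h(\xi_0)$. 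The continuous function $(\xi,z)\mapsto\det\big(h(\xi)-z\big)$ does not vanish on the compact set $\{\xi_0\}\times\Gamma$, so there exists a connected neighborhood $\TT_0\in\V_{\T^d}(\xi_0)$ on which this determinant is nonzero uniformly for $z\in\Gamma$. The Riesz projection
$$P(\xi):=-\frac{1}{2\pi i}\oint_\Gamma \big(h(\xi)-z\big)^{-1}\,\d z$$
is then well defined for every $\xi\in\TT_0$, and since matrix inversion is real analytic on the open set of invertible matrices, the integrand is jointly real analytic in $(\xi,z)$ on $\TT_0\times\Gamma$; compactness of $\Gamma$ ensures that $\xi\mapsto P(\xi)$ is itself real analytic on $\TT_0$.

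It remains to identify $P(\xi)$ with $\pi_{I_0}(\xi)$, which is where Hermiticity of $h(\xi)$ is essential: every eigenvalue of $h(\xi)$ is real, hence any eigenvalue inside the complex disk $D(\lambda_0,\delta)$ automatically lies in the real interval $I_0$, and any eigenvalue outside the disk lies outside $I_0$. The standard functional calculus for Hermitian matrices then yields $P(\xi)=\pi_{I_0}(\xi)$ for every $\xi\in\TT_0$. Specializing at $\xi=\xi_0$ gives assertion (i), and the analyticity of $P$ gives (ii). For the upper semicontinuity of $\mul$ at $(\lambda_0,\xi_0)$, observe that $\xi\mapsto\rank \pi_{I_0}(\xi)=\mathrm{tr}\,\pi_{I_0}(\xi)$ is continuous and integer-valued on the connected set $\TT_0$, hence constant and equal to $\rank\pi_{I_0}(\xi_0)=\mul(\lambda_0,\xi_0)$; for any $(\lambda,\xi)\in I_0\times\TT_0$ the inclusion $\pi_{\{\lambda\}}(\xi)\C^n\subset\pi_{I_0}(\xi)\C^n$ then gives $\mul(\lambda,\xi)\leq\mul(\lambda_0,\xi_0)$, which is the desired upper semicontinuity.

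The only delicate point in this scheme is the identification $P(\xi)=\pi_{I_0}(\xi)$: it hinges on preventing any eigenvalue of $h(\xi)$ from straying into $D(\lambda_0,\delta)\setminus I_0$ under perturbation of $\xi$, which is precisely what Hermiticity forbids. All remaining steps are routine contour-integral manipulations and standard functional calculus.
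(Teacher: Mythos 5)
Your proof is correct and follows essentially the same strategy as the paper's: both realize $\pi_{I_0}(\xi)$ as a Riesz contour integral of the resolvent, deduce real analyticity from that of $(\xi,z)\mapsto(h(\xi)-z)^{-1}$, and infer upper semicontinuity of $\mul$ from the local constancy of $\rank\pi_{I_0}(\xi)$. The only (minor, favorable) differences are that you use a disk rather than a thin curve hugging $I_0$, you make explicit that Hermiticity pins the eigenvalues of $h(\xi)$ to $\R$ — so that $D(\lambda_0,\delta)\cap\sigma\big(h(\xi)\big)=I_0\cap\sigma\big(h(\xi)\big)$ — and you justify local constancy of rank via the integer-valued trace on a connected $\TT_0$, where the paper simply invokes analyticity.
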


Before providing the proof we want to stress that the theory of hyperbolic polynomials allows us to show that the eigenvalues
behave well on $\xi$, and this will be used to choose some convenient neighborhoods.
More precisely, for $h$ as above, the eigenvalues of $h(\xi)$ are given by the roots of $\delta(\lambda,\xi):=\det\big(\lambda \Id_n-h(\xi)\big)$.
Since each entry of the matrix $h(\xi)$ is real analytic as function of $\xi$, $\delta(\lambda,\xi)$ can be written as follows:
\begin{equation}\label{hyperbolic}
\delta(\lambda,\xi)=\det\big(\lambda \Id_n-h(\xi)\big)=\lambda^{n}+\sum_{j=1}^{n}a_{n-j}(\xi)\lambda^{n-j}
\end{equation}
where each function $a_{n-j}$ is real analytic because it is the product of finitely many real analytic functions.
Let us denote by $\{\lambda_1(\xi),\dots,\lambda_n(\xi)\}$ the family of eigenvalues of $h(\xi)$ that correspond to the roots
of \eqref{hyperbolic}. Then, it can be shown that the map $\xi\to\big(\lambda_1(\xi),\dots,\lambda_n(\xi)\big)\in\R^{n}$
is locally Lipschitz \cite[Theorem 4.1]{KP08}.

\begin{proof}[Proof of Lemma \ref{vecindad}]
Let us fix $(\lambda_0,\xi_0)\in\R\times\T^{d}$. It is clear that if $\lambda_0$ is not an eigenvalue of $h(\xi_0)$,
then both conditions hold trivially since we can find $I_0$ and $\TT_0$ such that $I_0\cap\sigma\big(h(\xi)\big)=\emptyset$ for every $\xi\in\TT_0$.

Suppose now that $\lambda_0$ is an eigenvalue of $h(\xi_0)$. We choose $I_0$ such that its closure contains no other eigenvalue of $h(\xi_0)$,
which implies in particular that $\pi_{\{\lambda_0\}}(\xi_0)=\pi_{I_0}(\xi_0)$.
In fact, by choosing an interval $I_0=(a_0,b_0)$ small enough, we can also choose a neighborhood $\TT_0$ of $\xi_0$ such that
for any $\xi\in \TT_0$ we have $\sigma\big(h(\xi)\big)\cap\{a_0,b_0\}=\emptyset$.
Around $I_0$ we choose a positively oriented closed curve $\Gamma_0$ in $\C$, sufficiently close to $I_0$ such that it does not intersect
the spectrum of $h(\xi)$ for every $\xi\in \TT_0$. Hence, for every $\xi\in\TT_0$, the eigenvalues of $h(\xi)$ that lay inside $\Gamma_0$
correspond to $\lambda_0$, or more precisely if $\lambda_j(\xi)$ lies inside $\Gamma_0$ we have $\lambda_j(\xi_0)=\lambda_0$.

As a consequence of this construction it follows that
\begin{equation}\label{resolvent}
\pi_{I_0}(\xi)=\frac{1}{2\pi i}\oint_{\Gamma_0}\,dz\,\big(z-h(\xi)\big)^{-1}.
\end{equation}
Finally, since $(z,\xi)\to\big(z-h(\xi)\big)^{-1}$ is analytic in the two variables on any domain in which $z$ is not equal to any eigenvalues of $h(\xi)$,
as shown for example in \cite[Thm II.1.5]{Ka95}, we infer from \eqref{resolvent} that the map $\xi\to\pi_{I_0}(\xi)$ is real analytic.

We now recall that a real valued function defined on a topological space $\mathcal{X}$ is said to be upper semicontinuous at $x_0$
if for every $\epsilon>0$ there exists $\UU\in\V_\mathcal{X}(x_0)$ such that $\sup_{x\in\UU}f(x)\leq f(x_0)+\epsilon$.
If we pick $I_0\times \TT_0$ as neighborhood of $(\lambda_0,\xi_0)$ we have for $(\lambda,\xi)\in I_0\times \TT_0$ that
\begin{equation}\label{mulsemi}
\mul(\lambda,\xi)= \dim\pi_{\{\lambda\}}(\xi)\C^n\leq \dim\pi_{I_0}(\xi)\C^n= \dim\pi_{I_0}(\xi_0)\C^n= \dim\pi_{\{\lambda_0\}}(\xi_0)\C^n,
\end{equation}
where $\dim\pi_{I_0}(\xi)\C^n= \dim\pi_{I_0}(\xi_0)\C^n$ is due to the analyticity of the map $\xi\to\pi_{I_0}(\xi)$.
\end{proof}

The first step towards the construction of the conjugate operator is to provide a stratification of the Bloch variety.
The following proposition will enable us to derive it from Theorem \ref{thm_Hironaka}.
Before its statement, observe that $\R\times \T^n$ is a $(n+1)$-dimensional real analytic manifold.

\begin{Proposition}\label{sigmasemi}
$\{\Sigma_j\}_{j=1}^{n}$ is a family of semi-analytic sets in $\R\times\T^d$.
\end{Proposition}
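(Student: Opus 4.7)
The strategy is to translate the geometric condition $\mul(\lambda,\xi)=j$ into an algebraic condition on the characteristic polynomial, which is manifestly real analytic in $(\lambda,\xi)$, and then match Definition \ref{defsemi}. Set
\[
\delta(\lambda,\xi):=\det\bigl(\lambda\,\Id_n-h(\xi)\bigr),
\]
so that $\delta$ is a polynomial of degree $n$ in $\lambda$ whose coefficients are real analytic on $\T^d$ (this is exactly the function appearing in \eqref{hyperbolic}). In particular $\delta$, and each of its iterated $\lambda$-derivatives $\partial_\lambda^{k}\delta$, is a real analytic function on the real analytic manifold $\R\times\T^d$.

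The crucial observation is that $h(\xi)$ is Hermitian, hence unitarily diagonalizable, so for each fixed $\xi$ the geometric multiplicity of any eigenvalue coincides with its algebraic multiplicity, that is, with its order of vanishing as a root of $\delta(\cdot,\xi)$. Combined with the classical characterization of the order of a root of a polynomial, this yields, for every $j\in\{1,\dots,n\}$,
\[
\Sigma_j=\bigcap_{k=0}^{j-1}\bigl\{(\lambda,\xi)\in\R\times\T^d\mid \partial_\lambda^{k}\delta(\lambda,\xi)=0\bigr\}\;\cap\;\bigl\{(\lambda,\xi)\in\R\times\T^d\mid \partial_\lambda^{j}\delta(\lambda,\xi)\neq 0\bigr\}.
\]
The last open condition can be rewritten as $\bigl(\partial_\lambda^{j}\delta(\lambda,\xi)\bigr)^{2}>0$, so that $\Sigma_j$ is presented as the intersection of finitely many zero sets and one strict positivity set of real analytic functions globally defined on $\R\times\T^d$.

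To conclude, I would fix an arbitrary point $p\in\Sigma_j$, take any analytic chart $\OO\in\V_{\R\times\T^d}(p)$, and note that the restrictions of $\partial_\lambda^{k}\delta$ (for $0\le k<j$) and of $\bigl(\partial_\lambda^{j}\delta\bigr)^{2}$ to $\OO$ provide a finite family of real analytic functions whose combination via equalities and strict inequalities produces $\Sigma_j\cap\OO$ in exactly the form required by Definition \ref{defsemi}. I expect no real obstacle: the only conceptually nontrivial step is the identification of the geometric multiplicity $\mul$ with the vanishing order of $\delta(\cdot,\xi)$, which rests entirely on the hermiticity of $h(\xi)$; everything after that is formal verification of the definition.
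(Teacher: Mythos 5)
Your proof is correct and follows essentially the same route as the paper: both characterize $\Sigma_j$ via the vanishing orders of the successive $\lambda$-derivatives of the characteristic polynomial $\delta(\lambda,\xi)=\det(\lambda\Id_n - h(\xi))$ and then match Definition \ref{defsemi}. The only cosmetic differences are that you make explicit the use of Hermiticity (to identify $\mul$ with the order of vanishing of $\delta$, a point the paper leaves tacit) and you rewrite the open condition $\partial_\lambda^j\delta\neq 0$ as $(\partial_\lambda^j\delta)^2>0$ to fit the $\{>,=\}$ format literally, whereas the paper simply states the $\neq 0$ condition.
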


\begin{proof}
For any $(\lambda_0,\xi_0)\in\R\times\T^d$ we set $\OO=I_0\times \TT_0\in\V_{\R\times\T^d}(\lambda_0,\xi_0)$  as in Lemma \ref{vecindad}.
Then, for every $j>\mul(\lambda_0,\xi_0)$ we have $\Sigma_j\cap\OO=\emptyset$ by \eqref{mulsemi}, so we only need to consider $j\le\mul (\lambda_0,\xi_0)$.
Let us also recall that $\delta(\lambda,\xi)=\det\big(\lambda\Id_n-h(\xi)\big)$. By the discussion after the statement of Lemma \ref{vecindad},
$\delta$ admits real analytic derivatives on each variable. In addition, $\Sigma_j\cap\OO$ is described as follows:
\begin{align*}
\Sigma_j\cap\OO = &\,\big\{(\lambda,\xi)\in\OO \mid \lambda\text{ is an eigenvalue of multiplicity }j\text{ of }h(\xi)\big\}\\
= &\,\Big\{(\lambda,\xi)\in\OO\mid\delta(\lambda,\xi)
=\frac{\partial\delta}{\partial\lambda}(\lambda,\xi)=\cdots=\frac{\partial^{j-1}\delta}{\partial\lambda^{j-1}}(\lambda,\xi)=0,
\frac{\partial^{j}\delta}{\partial\lambda^{j}}(\lambda,\xi)\ne0\Big\}.\notag
\end{align*}
Then we deduce from Definition \ref{defsemi} that each $\Sigma_j$ is semi-analytic in $\R\times\T^d$.
\end{proof}

We have just shown that $\{\Sigma_j\}_{j=0}^n$ is a finite family of semi-analytic subsets of $\R\times\T^{d}$.
Since $p_{\R}:\R\times\T^d\to\R$ is proper and real analytic we can apply Theorem \ref{thm_Hironaka} to get a stratification $(\SSS,\SSS')$ of $p_{\R}$
such that $\SSS$ is compatible with $\{\Sigma_j\}_{j=1}^{n}$.
We recall that each $\SS_\alpha\in\SSS$ is contained in only one $\Sigma_j$ and that $\SSS'$ is a stratification of $\R$.
We will denote by $\tau $ \emph{the set of thresholds}, and this set is given by the union of the elements of dimension $0$ of $\SSS'$.
The thresholds are the levels of energy where one can not construct a conjugate operator.

\begin{Definition}\label{Tau}
Let $h$ be a real analytic function $\T^d\to M_n(\C)$ with $h(\xi)$ Hermitian for any $\xi \in \T^d$.
\emph{The set of thresholds} $\tau\equiv \tau(h)$ is defined by
\begin{equation*}
\tau:=\bigcup_{\dim \SS'_\beta=0}\SS'_\beta\ ,
\end{equation*}
where $\SSS'=\{\SS'_\beta\}_\beta$ is the partition of $\R$ given by Theorem \ref{thm_Hironaka} applied
to the proper real analytic function $p_\R$ and the family of semi-analytic subsets $\{\Sigma_j\}_{j=1}^n$.
\end{Definition}

Note that $\tau$ is a discrete subset of $\R$ because $\SSS'$ is locally finite,
\ie~only a finite numbers of $\SS'_\beta$ intersects the neighborhood of a given $\lambda\in\R$.
It is also easily observed that $\tau$ contains the energy levels corresponding to \emph{flat bands},
\ie~a value $\lambda\subset \R$ satisfying $\lambda_j(\xi)=\lambda$ for all $\xi$ and some fixed $j\in \{1,\dots,n\}$.

We start now the construction of the conjugate operator for a fixed closed interval $I\subset\R\backslash\tau$.
This is done in three steps: first we construct $A_{\lambda_0,\xi_0}$ for fixed $\lambda_0\in I$ and $\xi_0\in\T^{d}$;
then we sum over all the eigenvalues $\lambda$ of $h(\xi_0)$ that lie in $I$ and obtain $A_{\xi_0}$;
finally we define $A_I$ by smoothing a finite family of such $A_{\xi_0}$.

Let $(\lambda_0,\xi_0)$ be fixed with $\lambda_0\in I$. We denote by $\OO$ the neighborhood of $(\lambda_0,\xi_0)$
constructed as in Lemma \ref{vecindad}, \ie~$\OO=I_0\times\TT_0$.
Then $(\lambda_0,\xi_0)\in\SS_\alpha\subset\Sigma_j$ for a unique $\alpha$.
Without loss of generality we can assume that $\Sigma_j\cap\OO=\SS_\alpha\cap\OO$.
Let $s$ denote the dimension of the submanifold $\SS_\alpha$.
Furthermore, since $p_{\T^d}|_{\SS_\alpha}$ is injective the subset $p_{\T^d}(\SS_\alpha\cap\OO)\subset \T^d$ has also dimension $s$.
This enables us to find a neighborhood $\WW_0$ of the identity in $\R^{d}$ diffeomorphic to $\TT_0$, or more precisely there exists a diffeomorphism
\begin{equation}\label{eq_iota}
\iota_0:\TT_0\to\WW_0 \qquad \hbox{with}\qquad
\iota_0\big(p_{\T^{d}}(\SS_\alpha\cap\OO)\big)\subset\R^{s}\times\mathbf{0},
\end{equation}
see for example \cite[Theorem 2.10.(2)]{S99}.
Let us then set $x=(x',x'')\in\WW_0$ with $x'\in\R^{s}$ and $x''\in\R^{d-s}$.
We also define $f:I_0\times\WW_0\to\R$ by
\begin{equation*}
f(\lambda,x):=\frac{\partial^{j-1}\delta}{\partial\lambda^{j-1}}\big(\lambda,\iota_0^{-1}(x)\big)\ .
\end{equation*}
It follows from the proof of Proposition \ref{sigmasemi} that $f(\lambda,x',0)=0$ and $\frac{\partial f}{\partial\lambda}(\lambda,x',0)\neq0$ if $\lambda$ is such that
$\big(\lambda,\iota_0^{-1}(x)\big)\in\SS_\alpha$. By the implicit function theorem as for example presented in \cite[Theo. 2.3.5.]{KP02} and
maybe in a smaller subset $\WW_0$, we get that there exists a real analytic function $\bl:\WW_0\to\R$ such that
$f\big(\bl(x),x\big)=0$ for every $x\in\WW_0$. Then we have
\begin{equation}\label{parametrossa}
\SS_\alpha\cap\OO=\Bigl\lbrace\bigl(\boldsymbol{\lambda}(x',0),\iota_0^{-1}(x',0)\bigr) \mid (x',0)\in\WW_0\Bigr\rbrace .
\end{equation}

Let us denote by $(\iota_0^{-1})^{*}$ the pullback by $\iota_0^{-1}$ defined for $\varphi$ with support on $\TT_0$ and for any $x\in\WW_0$ by
$[(\iota_0^{-1})^{*}\varphi](x)=\varphi\big(\iota_0^{-1}(x)\big)$.
Analogously the pullback $\iota_0^{*}$ is defined by $[\iota_0^{*}g](\xi)=g\big(\iota_0(\xi)\big)$ for any $g$ defined on $\WW_0$.
We denote by $D_j=-i\partial_j$ the operator of differentiation with respect to the $j-$variable in $\R^{d}$.
We also set $\partial^{(s)}=(\partial_1,\dots,\partial_s)$ and $D^{(s)}=(D_1,\dots,D_s)$.
If we keep the notation $\pi_{I_0}$ for the matrix-valued multiplication operator
acting on $L^2(\T^d;\C^n)$ we can define $A_{\lambda_0,\xi_0}$ on $C^\infty_c(\TT_0;\C^{n})\subset L^2(\T^d;\C^n)$ by
\begin{equation*}
A_{\lambda_0,\xi_0}:=\tfrac{1}2\pi_{I_0}\iota_0^{*}\[(\partial^{(s)}\bl)\cdot D^{(s)}+D^{(s)}\cdot(\partial^{(s)}\bl)\](\iota_0^{-1})^{*}\pi_{I_0}.
\end{equation*}

By repeating this construction for each eigenvalue $\lambda_j$ of $h(\xi_0)$ lying in $I$ we can define
\begin{equation}\label{Axi0}
A_{\xi_0}:=\sum\limits_{\lambda_j\in\sigma({h(\xi_0)})\cap I}A_{\lambda_j,\xi_0}.
\end{equation}
It follows that for every $\xi_0 \in \T^{d}$ we can find a neighborhood $\TT_0$, given by the intersection of the neighborhoods constructed
for each pair $(\lambda_j,\xi_0)$, and an operator $A_{\xi_0}$ defined by \eqref{Axi0} on $C^\infty_c(\TT_0;\C^n)$.

We now define $\UU_I:=p_{\T^d}(p_\R^{-1}(I))$. Since we chose $I$ closed, $\UU_I$ is compact.
We can then consider finitely many pairs $(\xi_\ell,\TT_\ell)$ such that $A_{\xi_\ell}$ acts on $C^\infty_c(\TT_\ell;\C^{n})$
and such that $\UU_I\subset\bigcup \TT_\ell$. Considering a smooth partition of unity on $\T^{d}$, we can find a family of
smooth functions $\chi_\ell$ satisfying $\sum \chi_\ell^{2}(\xi)=1$ for $\xi\in\UU_I$ and such that each $\chi_\ell$
has support contained in $\TT_\ell$. The candidate for our conjugate operator is then given by
\begin{equation}\label{ai}
A_I=\sum_\ell \chi_\ell A_{\xi_\ell}\chi_\ell
\end{equation}
and is defined on $C^\infty(\T^d;\C^n)$. Note that $A_I$ depends on the covering $\{\TT_\ell\}$ of $\UU_I$ and we will impose later on another condition on
this covering to ensure the positivity of the commutator of $[ih,A_I]$ once suitably localized.

In order to further analyze this operator, let us recall that there exist at least three complementary ways
of considering operators acting on $\T^d$. For example, one can take advantage of the group structure of $\T^d$
(with dual group $\Z^d$) and develop a pseudodifferential calculus in this context.
A related approach consists in using a periodic version of the usual pseudodifferential operator of $\R^d$,
as already sketched in the Appendix of \cite{GN98a}.
Since $\T^d$ is also a compact smooth manifold, a more geometrical approach can be used.
Note that these various approaches and their relations have been thoroughly studied in \cite[Chap.~3-5]{RT09}.
We provide in the next paragraphs a few essential definitions or results, and refer to this reference for more information.

Let us first recall that for any $s\in \R$ the Sobolev space $\H^s(\T^d)$ consists in
the space of distributions $u\in \D'(\T^d)$ such that $\|u\|_{\H^s(\T^d)}$ is finite, with
\begin{equation*}
\|u\|_{\H^s(\T^d)}:= \Big(\sum_{\mu\in\Z^d} (1+|\mu|^2)^{s} |\check{u}(\mu)|^2 \Big)^{1/2}
\end{equation*}
and with $|\mu|$ the inherited Euclidean norm on $\Z^d$.
Note that the use of $\check{u}$ instead of the more conventional notation $\hat{u}$
comes from our initial choice of the Fourier transform, from $l^2(\Z^d)$ to $L^2(\T^d)$.
We also recall that the Fourier transform is a bijective map between the Schwartz space $\S(\Z^d)$
and the space $C^\infty(\T^d)$.
In analogy with pseudodifferential operators acting on $\R^d$ the toroidal pseudodifferential operators
are then defined by the formula
\begin{equation}\label{eq_pseudo}
[\Op(a)u](\xi)=\sum_{\mu\in \Z^d} e^{-2\pi i\xi\cdot\mu} \;\! a(\xi,\mu)\;\! \check{u}(\mu)
\end{equation}
for suitable symbol $a:\T^d\times \Z^d\to \C$ and any functions $u\in C^\infty(\T^d)$.
For example, for any $m\in \R$ a convenient class of symbols $S^m(\T^d\times \Z^d)$ is defined by
those functions $a:\T^d\times \Z^d$ which are smooth in the first variables and which satisfy
\begin{equation}\label{eq_sym}
\big|\triangle^\alpha \partial^\beta a(\xi,\mu)\big|\leq  c\;\! \langle \mu\rangle^{m-|\alpha|_1}
\qquad \forall \xi \in \T^d, \ \mu\in \Z^d
\end{equation}
for some constant $c$ which depend on the symbol $a$, on $\alpha,\beta\in \N^d$ and on $m$.
In \eqref{eq_sym} we have used the notations $\langle \mu\rangle^s$ for $(1+|\mu|^2)^{s/2}$, $\partial$ for the differentiation with respect to
the $\xi$-variable, and $\triangle$ for the difference operator defined on $f:\Z^d\to \C$ by
\begin{equation*}
[\triangle_j f](\mu) = f(\mu+\delta_j)-f(\mu)\equiv f\big(\mu_1,\dots,\mu_{j-1},\mu_j+1,\mu_{j+1},\dots,\mu_d\big)-f(\mu).
\end{equation*}
For any $\alpha \in \N^d$ we have also used the notations
$\triangle^\alpha =\prod_{j=1}^d (\triangle_j)^{\alpha_j}$
and $|\alpha|_1$ for $\sum_{j=1}^d \alpha_j$.

As mentioned before, another convenient approach consists in considering $\T^d$ as a smooth manifold
and by defining differentiable operators through localizations.
In this framework an operator $A:C^\infty(\T^d)\to C^\infty(\T^d)$ is a differential operator of order $\ell\geq 0$
if for any chart $(\OO,\varPhi)$ (also called local coordinates) one has
\begin{equation*}
A = \varPhi^* \sum_{|\alpha|_1\leq \ell} b_\alpha\;\! D^\alpha (\varPhi^{-1})^*
\end{equation*}
where the notation for the pullback has been used again, and where $b_\alpha$ are multiplication operators by smooth functions on $\R^d$.
We denote the class of such differential operators of order $\ell$ by $\mathrm{Diff}^\ell(\T^d)$.
These operators are special instances of the more general set of pseudodifferential operators of order $\ell$ on $\T^d$,
denoted by $\Psi^\ell(\T^d)$.

Our interest in having recalled these frameworks relies in the following two results: Firstly,
any operators $A\in \Psi^\ell(\T^d)$ is equal to $\Op(a)$ for some $a\in S^\ell(\T^d\times \Z^d)$,
as shown in a much more general context in \cite[Thm.~5.4.1]{RT09}.
Secondly, if $a\in S^m(\T^d\times \Z^d)$, then $\Op(a)$ extends to a bounded linear operator from $\H^s(\T^d)$
to $\H^{s-m}(\T^d)$ for every $s\in \R$ \cite[Prop.~4.2.3]{RT09}.
Now, by performing a tensor product of the spaces constructed above with the matrices $M_n(\C)$, one directly
infers that the operator $A_I$ introduced in \eqref{ai} is a differential operator of order $1$
and that this operator extends to a bounded operator from $\H^s(\T^d;\C^n)$
to $\H^{s-1}(\T^d;\C^n)$ for every $s\in \R$.

We have now introduced enough material for providing a simple proof of the following statement.

\begin{Lemma}\label{lem_ess_adj}
The operator $A_I$ defined in \eqref{ai} is essentially self-adjoint on $C^\infty(\T^d;\C^n)$.
\end{Lemma}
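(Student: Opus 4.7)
\emph{Plan of proof.}
The strategy is to apply Nelson's commutator theorem to the symmetric operator $A_I$, using as auxiliary operator $N:=1-\Delta_{\T^d}$, where $\Delta_{\T^d}$ denotes the (non-positive) Laplace--Beltrami operator on $\T^d$ acting diagonally on the $\C^n$-fibers. Recall that $N$ is self-adjoint in $L^2(\T^d;\C^n)$, satisfies $N\geq 1$, has domain $\H^2(\T^d;\C^n)$, and admits $C^\infty(\T^d;\C^n)$ as a core.

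First I would check that $A_I$ is symmetric on $C^\infty(\T^d;\C^n)$. Each inner block
\begin{equation*}
\tfrac{1}{2}\bigl[(\partial^{(s)}\bl)\cdot D^{(s)}+D^{(s)}\cdot(\partial^{(s)}\bl)\bigr]
\end{equation*}
is the symmetrization of a first-order differential operator with real-valued smooth coefficient $\bl$, hence it is symmetric on $C^\infty_c(\WW_0;\C^n)$. Conjugation by the diffeomorphism $\iota_0$ (with the Jacobian absorbed in the identification of the local $L^2$-structures) and the two-sided sandwiching by the self-adjoint projection-valued multiplication $\pi_{I_0}$ preserve this symmetry, so each $A_{\lambda_0,\xi_0}$, and therefore each $A_{\xi_\ell}$, is symmetric on its natural domain. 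Since the cutoffs $\chi_\ell$ are real-valued, the sum $A_I=\sum_\ell \chi_\ell A_{\xi_\ell}\chi_\ell$ is symmetric on $C^\infty(\T^d;\C^n)$.

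Next I would verify the two hypotheses of Nelson's theorem. The first bound $\|A_I\phi\|\leq c\|N\phi\|$ is immediate from the fact, recalled just before the lemma, that $A_I$ extends to a bounded operator $\H^1(\T^d;\C^n)\to L^2(\T^d;\C^n)$, together with $\|\phi\|_{\H^1}\leq c'\|N\phi\|$ (which holds since $N\geq 1$). For the quadratic form bound on the commutator I would observe that, although $[A_I,\Delta_{\T^d}]$ is a priori a second-order differential operator, the cancellation of principal symbols allows one to write it, locally in the charts underlying \eqref{ai}, as a finite sum of ``balanced'' terms of the form $D_j b_{jk}D_k$ with smooth matrix-valued coefficients $b_{jk}$, plus an operator of order at most $1$. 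The Cauchy--Schwarz identity $\langle D_j b_{jk}D_k\phi,\phi\rangle=\langle b_{jk}D_k\phi,D_j\phi\rangle$ then controls the quadratic form of each balanced term by $\|\phi\|_{\H^1}^2=\langle\phi,N\phi\rangle$, yielding
\begin{equation*}
\bigl|\langle iA_I\phi,N\phi\rangle-\langle N\phi,iA_I\phi\rangle\bigr|\leq c''\,\langle\phi,N\phi\rangle\qquad\forall\phi\in C^\infty(\T^d;\C^n).
\end{equation*}
Nelson's commutator theorem then implies that $A_I$ is essentially self-adjoint on any core of $N$, and in particular on $C^\infty(\T^d;\C^n)$. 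The only delicate step is this form bound on the commutator, which hinges on rewriting $[A_I,\Delta_{\T^d}]$ in a form whose quadratic form is of ``$\H^1$-type'' rather than ``$\H^2$-type''; everything else reduces to routine bookkeeping on differential operators with smooth coefficients on the compact manifold $\T^d$.
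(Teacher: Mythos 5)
Your proposal is correct and follows essentially the same route as the paper: Nelson's commutator theorem with the comparison operator $\Lambda=(\Id-\Delta_{\T^d})\otimes\Id_n$, symmetry of $A_I$ on $C^\infty(\T^d;\C^n)$, the bound $\|A_I f\|\leq c\|\Lambda f\|$ from the mapping property $A_I:\H^1(\T^d;\C^n)\to L^2(\T^d;\C^n)$, and the form bound on $[A_I,\Lambda]$ coming from its being a differential operator of order $2$. The only difference is cosmetic: you make explicit (via the divergence-form rewriting and integration by parts) why the order-$2$ commutator has an $\H^1$-type quadratic form, a step the paper leaves to a direct computation or to the cited result in \cite{RT09}.
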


\begin{proof}
This proof is based on an application of Nelson's commutator theorem, as presented in \cite[Thm.~X.37]{RS75}.
For its application, we denote by $\Delta_{\T_d}$ the Laplace operator on $L^2(\T^d)$,
which is a self-adjoint operator with domain $\H^2(\T^d)$ and which is essentially self-adjoint on $C^\infty(\T^d)$.
We also set $\Lambda:=\big(\Id-\Delta_{\T^d}\big)\otimes \Id_n$ which is now a self-adjoint operator in $L^2(\T^d;\C^n)$ with domain
$\H^2(\T^d;\C^n)$ and which is essentially self-adjoint on $C^\infty(\T^d;\C^n)$.
Let us note that these operators can also be seen as second order differential operators on $[0,1]^d$
with periodic boundary conditions.

Now, it is easily observed that $A_I$ is symmetric on $C^\infty(\T^d;\C^n)$.
In addition, since $A_I$ extends to a bounded operator from $\H^1(\T^d;\C^n)$
to $\H^{0}(\T^d;\C^n)\equiv L^2(\T^d;\C^n)$, as mentioned before the statement,
this operator is \emph{a fortiori} bounded from $\H^2(\T^d;\C^n)$ to $L^2(\T^d;\C^n)$.
As a consequence, one infers that there exist $c,c'>0$ such that for any $f\in C^\infty(\T^d;\C^n)$
\begin{equation*}
\|A_I f\|_{L^2(\T^d;\C^n)}\leq c\;\!\|f\|_{\H^2(\T^d;\C^n)} \|\leq c'\;\!\|\Lambda f\|_{L^2(\T^d;\C^n)}\ .
\end{equation*}
We refer also to \cite[Rem.~4.8.4]{RT09} for the second inequality.
Then, either from a direct computation performed on $C^\infty(\T^d;\C^n)$ or from an application of the abstract result
\cite[4.7.10]{RT09} one deduces that the commutator $[A_I,\Lambda]$ corresponds to a differential operator of order $2$.
It thus follows that there exists $c>0$ such that for any $f\in C^\infty(\T^d;\C^n)$ one has
\begin{equation*}
\big|\langle A_I f,\Lambda f\rangle - \langle \Lambda f,A_I f\rangle \big| \leq c\;\! \langle f,\Lambda f\rangle = c\;\!\|\Lambda^{1/2}f\|^2 .
\end{equation*}
The statement of the lemma follows then from the mentioned Nelson's commutator theorem.
\end{proof}

We are now in a suitable position for proving a Mourre estimate,
or in other words the positivity of $[ih,A_I]$ when suitably localized.
As mentioned at the beginning of this section, a similar result already appeared in \cite[Thm.~3.1]{GN98},
but the above construction and the following proof have been adapted to our context.

\begin{Theorem}\label{hcinfinity}
Let $h$ be a real analytic function $\T^d\to M_n(\C)$ with $h(\xi)$ Hermitian for any $\xi \in \T^d$,
and let also $h$ denote the corresponding multiplication operator in $L^2(\T^d;\C^n)$.
Let $\tau$ be the set of thresholds provided by Definition \ref{Tau} and let $I$ be any closed interval in $\R\setminus \tau$.
Then, there exist a finite family of pairs $\{(\TT_\ell, \xi_\ell)\}$ with $\xi_\ell\in\TT_\ell$
such that for the operator $A_I$ defined by \eqref{ai} the following two properties hold:
\begin{enumerate}
\item[(i)] the operator $h$ belongs to $C^2(A_I)$,
\item[(ii)] there exists a constant $a_I>0$ such that
\begin{equation}\label{eq_Mourre}
E_h(I)\[ih,A_I\]E_h(I)\ge a_I E_h(I)\ .
\end{equation}
\end{enumerate}
\end{Theorem}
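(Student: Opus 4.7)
The plan is to establish (i) by a symbol calculus argument and (ii) by a local commutator computation combined with a covering/partition-of-unity argument, both exploiting the analytic structure uncovered in Section~\ref{subsec_A}.

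For (i), I would argue that the operator $A_I$ constructed in \eqref{ai} is a first-order differential operator on $\T^d$ with smooth matrix-valued coefficients: each $A_{\xi_\ell}$ is assembled from multiplication by the real analytic projection $\pi_{I_0}$, by the components of $\partial^{(s)}\bl$ pulled back via $\iota_0$, and from the differential operators $D^{(s)}$, all sandwiched by smooth cutoffs $\chi_\ell$. Since $h$ is a real analytic matrix-valued function, both $[h,A_I]$ and $[[h,A_I],A_I]$ lie in $\Psi^0(\T^d;\C^n)$ by the toroidal symbolic calculus recalled at the end of Section~\ref{subsec_A}, and therefore extend to bounded operators on $L^2(\T^d;\C^n)$. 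The standard equivalence between $C^2$-regularity and boundedness of iterated commutators (for a bounded self-adjoint operator) then yields $h\in C^2(A_I)$.

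For (ii), my approach is to compute the commutator locally, identify the source of positivity, and then patch. Fix $(\lambda_0,\xi_0)\in\Sigma$ with $\lambda_0\in I$. In the local coordinates $x=(x',x'')=\iota_0(\xi)$ the stratum $\SS_\alpha\cap\OO$ is the graph of $\bl$ over $\R^s\times\{0\}$ by \eqref{parametrossa}. Because $\SSS$ is compatible with $\{\Sigma_j\}$ the multiplicity is constant on $\SS_\alpha$, and on the range of $\pi_{I_0}(\xi)$ the operator $h(\xi)$ coincides with multiplication by $\bl(\iota_0(\xi))$ for $\xi$ in a neighborhood of $\xi_0$ in the stratum. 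Using the elementary identity $[ia,D_j]=-\partial_j a$ for a smooth multiplication $a$ and symmetrizing as in the definition of $A_{\lambda_0,\xi_0}$, a direct calculation yields
\begin{equation*}
[ih,A_{\lambda_0,\xi_0}]=\pi_{I_0}\iota_0^*\big|\partial^{(s)}\bl\big|^2(\iota_0^{-1})^*\pi_{I_0}+R_{\lambda_0,\xi_0},
\end{equation*}
where $R_{\lambda_0,\xi_0}$ collects controlled corrections from the transverse variable $x''$ and from derivatives of $\pi_{I_0}$. The crucial non-degeneracy is that $\partial^{(s)}\bl$ does not vanish at $\iota_0(\xi_0)$: since $\lambda_0\notin\tau$, the stratum $\SS'_\beta\subset\R$ containing $\lambda_0$ has positive dimension, hence dimension one, and the stratification property of Theorem~\ref{thm_Hironaka} forces the rank of the Jacobian of $p_\R|_{\SS_\alpha}$ to equal one, which is exactly the statement that the $x'$-gradient of $\bl$ is non-zero. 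Summing over all eigenvalues of $h(\xi_0)$ in $I$ as in \eqref{Axi0} gives a positive lower bound for the fibered commutator on the range of $E_h(I)$ in a neighborhood of $\xi_0$. Since $h$ and $\chi_\ell$ are both multiplication operators in $\xi$, one has $[ih,\chi_\ell]=0$, so $[ih,A_I]=\sum_\ell\chi_\ell[ih,A_{\xi_\ell}]\chi_\ell$; extracting a finite subcover of the compact set $\UU_I:=p_{\T^d}(p_\R^{-1}(I))$ and using $\sum_\ell\chi_\ell^2=1$ on $\UU_I$ together with the fact that $E_h(I)$ is fibered over $\UU_I$, the Mourre estimate \eqref{eq_Mourre} follows.

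The main obstacle I anticipate is controlling the remainder $R_{\lambda_0,\xi_0}$ and the off-stratum contributions to $E_h(I)$: outside $p_{\T^d}(\SS_\alpha)$ the identification of $h$ with $\bl$ on the range of $\pi_{I_0}$ can break down, and the projection $E_h(I)(\xi)$ may mix contributions from several strata meeting $I$. This will require a careful refinement of the covering $\{\TT_\ell\}$, using the local finiteness of $\SSS$ and the compatibility with $\{\Sigma_j\}$, combined with a continuity argument for the spectral projections, so that on each $\TT_\ell$ the dominant contribution to $E_h(I)[ih,A_{\xi_\ell}]E_h(I)$ is the positive leading term and the remainder can be absorbed into it.
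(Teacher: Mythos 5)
Your overall strategy matches the paper's: establish (i) by boundedness of iterated commutators computed on the core $C^\infty(\T^d;\C^n)$, and establish (ii) by a local commutator computation at each $\xi_0$ followed by a compactness/partition-of-unity argument on $\UU_I=p_{\T^d}(p_\R^{-1}(I))$. Part (i) of your argument is essentially what the paper does and is fine.

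Part (ii), however, contains a genuine gap that you yourself flag at the end. You write the local commutator as
\begin{equation*}
[ih,A_{\lambda_0,\xi_0}]=\pi_{I_0}\iota_0^*\big|\partial^{(s)}\bl\big|^2(\iota_0^{-1})^*\pi_{I_0}+R_{\lambda_0,\xi_0},
\end{equation*}
and then propose to ``absorb'' $R_{\lambda_0,\xi_0}$ into the positive leading term by refining the covering. This leaves the positivity unproved: without some quantitative smallness of $R_{\lambda_0,\xi_0}$ relative to the leading term, the estimate can fail, and nothing in your sketch forces $R_{\lambda_0,\xi_0}$ to be small. The missing idea is that there is \emph{no remainder at all} at the base point $\xi_0$. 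Writing $A_{\lambda_0,\xi_0}=\pi_0\big((\partial_0\tilde{\lambda}_0)\cdot\nabla_0\big)\pi_0+\tfrac{i}{2}\pi_0(\Delta_0\tilde{\lambda}_0)\pi_0$ (the second term coming from symmetrization), one observes: (a) the symmetrization term commutes with $h$ because $\Delta_0\tilde{\lambda}_0$ is scalar and $h$ commutes with $\pi_0$, so it drops out of $[ih,\,\cdot\,]$; (b) the ``derivatives of $\pi_{I_0}$'' contribution you were worried about vanishes identically by the projection identity $\pi(\cdot)\pi'(\cdot)\pi(\cdot)=0$, valid for any differentiable family of orthogonal projections; and (c) using $\pi_0(\xi_0)h(\xi_0)=\tilde{\lambda}_0(\xi_0)\pi_0(\xi_0)$ (which holds on the stratum, in particular at $\xi_0$), the fiber of the commutator at $\xi_0$ equals \emph{exactly} $\pi_0(\xi_0)\,|\partial_0\tilde{\lambda}_0(\xi_0)|^2\,\pi_0(\xi_0)$. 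The paper then gets the strict lower bound $c_0\,\pi_I(\xi_0)$ by summing over the finitely many eigenvalues of $h(\xi_0)$ in $I$, and extends it (with a factor $1/2$) to a possibly smaller $\TT_0$ by continuity of $b_{\xi_0}(\cdot)$ and $\pi_I(\cdot)$ — not by controlling a remainder. In short, the mechanism that saves the estimate is an exact algebraic cancellation at the base point plus continuity, not remainder absorption; without facts (a)--(c) your local positivity step is incomplete.

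A minor additional remark on (i): the equivalence you invoke requires the commutators to be computed on a core of $A_I$. The paper makes this explicit (the computations take place on $C^\infty(\T^d;\C^n)$, shown to be a core in Lemma~\ref{lem_ess_adj}); you should say this rather than cite a bare equivalence for bounded operators.
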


Before providing the proof, let us restate part of the previous statement with the notations introduced in Section \ref{subsec_Mourre}.
As a consequence of \eqref{eq_Mourre}, for any closed interval $I\equiv [a,b]\subset \R\setminus \tau$, one has
\begin{equation}\label{eq_mu}
(a,b)\subset \mu^{A_I}(h)\subset \tilde\mu^{A_I}(h).
\end{equation}

\begin{proof}
Let $(\lambda_0,\xi_0)\in \T^d\times \R$ be fixed with $\lambda_0\in I$, and let $\iota_0$ be the associated diffeomorphism introduced in \eqref{eq_iota}.
For shortness we also set $\pi_0:=\pi_{I_0}$,  $\tilde \lambda_0 :=\iota_0^* \boldsymbol{\lambda}(\iota_0^{-1})^{*}$, $\nabla_{0}=\iota^{*}_0 D^{(s)}(\iota_0^{-1})^{*}$
and $\partial_{0}=\iota^{*}_0 \partial^{(s)}(\iota_0^{-1})^{*}$.
With these notations one has
\begin{align*}
A_{\lambda_0,\xi_0} &=\tfrac{1}2\pi_{I_0}\iota_0^{*}\[(\partial^{(s)}\bl)\cdot D^{(s)}+D^{(s)}\cdot(\partial^{(s)}\bl)\](\iota_0^{-1})^{*}\pi_{I_0} \\
& = \tfrac{1}2\pi_{0}\[(\partial_0\tilde{\lambda}_0)\cdot \nabla_0 +\nabla_0 \cdot(\partial_0\tilde{\lambda}_0)\]\pi_{0} \\
& =\pi_0\big((\partial_0\tilde{\lambda}_0)\cdot\nabla_0\big)\pi_0+\tfrac{i}{2}\pi_0(\Delta_0\tilde{\lambda}_0)\pi_0
\end{align*}
where $-\Delta_0 := \iota_0^*\big(\sum_{j=0}^s\partial_j^2\big)(\iota_0^{-1})^*$.

Now, since both operators $h$ and $A_{\lambda_0,\xi_0}$ leave $C^\infty(\TT_0;\C^n)$ invariant, the commutator $[ih,A_{\lambda_0,\xi_0}]$
can be defined as an operator on $C^{\infty}(\TT_0;\C^{n})$. On this set one has
\begin{equation*}
[ih,A_{\lambda_0,\xi_0}]=[ih,\pi_0\big((\partial_0\tilde{\lambda}_0)\cdot\nabla_0\big)\pi_0] - \tfrac{1}{2}[h,\pi_0(\Delta_0\tilde{\lambda}_0)\pi_0]
\end{equation*}
Note also that the second term in the r.h.s.~vanishes since $\Delta_0\tilde{\lambda}_0$ is scalar and since $h$ commutes with $\pi_{0}$.
Furthermore we have for $\varphi\in C^\infty(\TT_0;\C^n)$ that
\begin{align*}
& \Big(\big[ih,\pi_0\big((\partial_0\tilde{\lambda}_0)\cdot\nabla_0\big)\pi_0\big]\varphi\Big)(\xi) \\
& =i h(\xi)\pi_{0}(\xi)(\partial_0\tilde{\lambda}_0)(\xi)\cdot\Bigl((\nabla_0\pi_{0})(\xi)\pi_{0}(\xi)\varphi(\xi)+\pi_{0}(\xi)\bigl(\nabla_0(\pi_{0}\varphi)\bigr)(\xi)\Bigr) \\
& \quad - i\pi_{0}(\xi)(\partial_0\tilde{\lambda}_0)(\xi)\cdot\Bigl(\big(\nabla_0(\pi_{0}h)\big)(\xi)\pi_{0}(\xi)\varphi(\xi)
+\pi_{0}(\xi)h(\xi)\big(\nabla_0(\pi_{0}\varphi)\big)(\xi)\Bigr)\ .
\end{align*}
Since $h$ commutes with each (scalar) component of $\partial_0\tilde{\lambda}_0$ the second terms of the parenthesis cancel each others.
Consequently, one infers that $[h,iA_{\lambda_0,\xi_0}]$ corresponds to a bounded fibered operator $B_{\lambda_0,\xi_0}$ with its fibers defined by
\begin{equation*}
b_{\lambda_0,\xi_0}(\xi)=i \pi_{0}(\xi)(\partial_0\tilde{\lambda}_0)(\xi)\cdot\Bigl(h(\xi)(\nabla_0\pi_{0})(\xi)-\bigl(\nabla_0(\pi_{0}h)\bigr)(\xi)\Bigr)\pi_{0}(\xi)\ .
\end{equation*}
The first term in the parenthesis vanishes because $\pi(\cdot) \pi'(\cdot) \pi(\cdot)= 0$ for any differentiable family of projections.
For the second term one has by construction $\pi_{0}(\xi)h(\xi)=\tilde{\lambda}_0(\xi)\pi_{0}(\xi)$ for $\xi\in \TT_0$, and therefore
\begin{equation*}
b_{\lambda_0,\xi_0}(\xi)=
i\pi_{0}(\xi)(\partial_0\tilde{\lambda}_0)(\xi)\cdot\big(\nabla_0(\tilde{\lambda}_0\pi_{0})\big)(\xi)\pi_{0}(\xi)
=\pi_{0}(\xi)|(\partial_0\tilde{\lambda}_0)(\xi)|^2\pi_{0}(\xi)\ .
\end{equation*}

We now recall that by the definition of the set of thresholds $\tau$ and the properties of the stratification
one has $\dim(p_\R|_{\SS_\alpha})=1$ with $\SS_\alpha$ the real analytic submanifold of $\R\times \T^d$ with $(\lambda_0,\xi_0)\in \SS_\alpha$.
Combining this with \eqref{parametrossa} we have that
\begin{equation*}
1=\dim(p_\R|_{\SS_\alpha})=\dim\big(\boldsymbol{\lambda}(\WW_0,0)\big)=\rank(\partial_0\tilde{\lambda}_0)
\end{equation*}
from which we deduce that $\partial_0\tilde{\lambda}_0$ does not vanish on $\TT_0$.
We get then $b_{\lambda_0,\xi_0}(\xi_0)\ge c_{0,0} \pi_{I_0}(\xi_0)$, with $c_{0,0}>0$, and since for fixed $\xi_0$ there are at most $n$ constants we infer
\begin{equation}\label{estimacionenxi0}
b_{\xi_0}(\xi_0):=\sum_{\lambda_i\in\sigma({h(\xi_0)})\cap I} b_{\lambda_i,\xi_0}(\xi_0)\ge \min\{ c_{i,0}\} \sum \pi_{I_i}(\xi_0)=c_0 \pi_{I}(\xi_0)
\end{equation}
with $c_0>0$. By continuity of both $b_{\xi_0}$ and $\pi_I$ at $\xi_0$ and using \eqref{estimacionenxi0}
we can find a possibly smaller neighborhood $\TT_0$ satisfying the properties of Lemma \ref{vecindad} such that for $\xi\in \TT_0$ we have
\begin{equation}\label{continuidadcomutador}
\pi_I(\xi)b_{\xi_0}(\xi)\pi_I(\xi)\ge \tfrac{1}{2} c_0 \pi_{I}(\xi)\ .
\end{equation}

Since we chose $\xi_0$ arbitrarily in $\T^d$, we can construct $\TT_0$ satisfying \eqref{continuidadcomutador} for every
$\xi_0$. It follows that one can find a covering of the closed set $\UU_I:=p_{\T^d}(p_\R^{-1}(I))$ composed of a finite number of such $\TT_0$.
We have thus defined the covering $\{\TT_\ell\}$ already mentioned before the equation \eqref{ai} and mentioned in the above statement.
To finish, observe that $[ih,A_I]$ is a bounded fibered operator with fiber $b$ given for any $\xi\in \UU_I$ by
\begin{equation*}
b(\xi)=\sum_{\ell} \chi_\ell(\xi)b_{\xi_\ell}(\xi)\chi_\ell(\xi)\ .
\end{equation*}
Therefore, the operator $E_h(I)[ih,A_I]E_h(I)$ is a bounded fibered operator with fiber equal to
$ \pi_I(\xi)b(\xi)\pi_I(\xi)$.
We also infer that
\begin{equation*}
\sum_{\ell}\pi_I(\xi)\chi_\ell(\xi)b_{\xi_\ell}(\xi)\chi_\ell(\xi)\pi_I(\xi)\ge \tfrac{1}{2} \min_\ell \{{c_\ell}\} \pi_{I}(\xi)
\end{equation*}
for every $\xi \in \T^d$. By setting $a_I=\tfrac{1}{2} \min_\ell \{{c_\ell}\}$ we conclude that
\begin{equation*}
E_{h}(I)[ih,A_I]E_{h}(I)\ge a_I E_{h}(I).
\end{equation*}

Since the operator $B:=[ih,A_I]$ has been computed on $C^\infty(\T^d;\C^n)$ which is a core for $A_I$, and since the resulting
operator is bounded, one deduces from the results stated in Section \ref{subsec_Mourre} that $h$ belongs to $C^1(A_I)$.
Then, since the operator $B$ is again an analytically fibered operator, the computation of $[iB,A_I]$ can be performed
similarly on $C^\infty(\T^d;\C^n)$ and the resulting operator is once again bounded.
It then follows that $h$ belongs to $C^2(A_I)$.
\end{proof}

\begin{Remark}
When studying a particular graph one can usually find analytic families of eigenvalues $\lambda_i$ and
associated eigenprojections $\Pi_{i}$ outside a discrete subset of $\T^{d}$.
Then, a more natural conjugate operator is given formally by
$\sum\Pi_{i}\bigr((\partial\lambda_i)\cdot\nabla +\nabla \cdot (\partial\lambda_i)\bigl)\Pi_{i}$ as used for example in \cite{An13}
(see also \cite{GM} for a related construction).
In fact it is a classical result due to Rellich that for every one-dimensional analytic family of (not necessarily bounded) operators,
such analytic eigenprojections can be found. For dimension $2$, the theory of hyperbolic polynomials shows that this choice
can be made outside a discrete set \cite[Remark 5.6]{KP08}. For arbitrary dimension, there seems to be no argument to ensure that analytic
eigenprojections can be chosen and so we shall use the conjugate operator given by \eqref{ai}.
\end{Remark}

\section{Proof of the main theorem}\label{section:proofbis}
\setcounter{equation}{0}

In this section we provide the proof of our main theorem.
It will be divided into two subsections. In the first one we derive some abstract results which
are not directly linked with topological crystals. However, the form of the operators we consider is
inspired by the operators coming from the initial problem. In the second subsection
we show how the abstract results can be applied to the perturbation of the initial periodic operator on a crystal lattice.

Before starting with the first subsection let us recall that the general formula for a toroidal pseudodifferential operator on
$L^2(\T^d)$ has been introduced in \eqref{eq_pseudo}.
Subsequently, we shall need a slightly more general formula. Namely, the notion a toroidal pseudodifferential operator $\Op(a)$
acting on $u \in C^\infty(\T^d;\C^n)$ and given by
\begin{equation*}
[\Op(a)u](\xi) :=\sum_{\mu\in \Z^d} e^{-2\pi i\xi\cdot\mu} \;\! a(\xi,\mu)\;\! \check{u}(\mu), \qquad \xi \in \T^d,
\end{equation*}
where $a:\T^d\times \Z^d \to M_n(\C)$ is called its symbol.

\subsection{A few regular operators}

In the first lemma we derive the symbol corresponding to the adjoint of a special class of symbols.

\begin{Lemma}\label{lem_involutionbis}
For a bounded $a:\Z^d \to M_n(\C)$ and a fixed $\nu\in\Z^d$, we consider the symbol $a_{\nu}:\T^d\times \Z^d \to M_n(\C)$ defined by
\begin{equation}\label{eq_anu}
a_{\nu}(\xi,\mu)= e^{2\pi i\xi\cdot\nu}a(\mu), \qquad \forall \xi \in \T^d, \ \mu\in \Z^d,
\end{equation}
and the symbol $a_{\nu}^\dagger:\T^d\times \Z^d \to M_n(\C)$ defined by
\begin{equation}\label{eq_anu_dag}
a_{\nu}^\dagger(\xi,\mu)= e^{-2\pi i\xi\cdot\nu} a(\mu+\nu)^*, \qquad \forall \xi \in \T^d, \ \mu\in \Z^d.
\end{equation}
Then the following equality holds in $\B\big(L^2(\T^d;\C^n)\big)$:
$\Op(a_{\nu})^*=\Op(a_{\nu}^\dagger)$.
\end{Lemma}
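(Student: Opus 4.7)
My plan is to unpack both symbols into the quantization formula and recognise them as compositions of a Fourier multiplier with a multiplication operator, since the symbol $a_\nu$ factorises: its $\xi$-dependence is just the character $e^{2\pi i\xi\cdot\nu}$ and its $\mu$-dependence is just $a(\mu)$. This separation is the decisive structural observation; once it is made, the rest is bookkeeping.

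Concretely, I will first check that for any bounded $b:\Z^d\to M_n(\C)$ the Fourier multiplier $b(D):=\Op\big((\xi,\mu)\mapsto b(\mu)\big)$ is equal to $\F^*\;\!b\;\!\F$ (viewing $b$ as a multiplication operator on $l^2(\Z^d;\C^n)$), so its adjoint in $\B(L^2(\T^d;\C^n))$ is $b^*(D):=\Op\big((\xi,\mu)\mapsto b(\mu)^*\big)$. Second, I will denote by $M_\nu$ the bounded operator of multiplication by $e^{2\pi i\xi\cdot\nu}\;\!\Id_n$ on $L^2(\T^d;\C^n)$, whose adjoint is $M_{-\nu}=M_\nu^*$. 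Writing out \eqref{eq_anu} in the defining formula for $\Op$ yields immediately
\begin{equation*}
\Op(a_\nu)=M_\nu\;\! a(D),
\end{equation*}
and therefore
\begin{equation*}
\Op(a_\nu)^*=a(D)^*\;\!M_\nu^*=a^*(D)\;\!M_{-\nu}.
\end{equation*}

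It remains to identify the right-hand side with $\Op(a_\nu^\dagger)$. Applying the defining formula to $a_\nu^\dagger$ as in \eqref{eq_anu_dag} and performing the change of summation variable $\mu\mapsto\mu-\nu$ gives
\begin{equation*}
[\Op(a_\nu^\dagger)v](\xi)=\sum_{\mu\in\Z^d}e^{-2\pi i\xi\cdot\mu}a(\mu)^*\;\!\check v(\mu-\nu),
\end{equation*}
and since the simple computation $\check v(\mu-\nu)=\F^*(M_{-\nu}v)(\mu)$ (an elementary consequence of the definition of $\F^*$) shows that the right-hand side equals $[a^*(D)M_{-\nu}v](\xi)$, we conclude $\Op(a_\nu^\dagger)=a^*(D)M_{-\nu}=\Op(a_\nu)^*$.

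There is no real obstacle here: the only point that requires minor care is ensuring Fubini applies when one instead writes everything out as an iterated sum-integral and compares $\langle\Op(a_\nu)u,v\rangle$ with $\langle u,\Op(a_\nu^\dagger)v\rangle$ directly. Since $a$ is bounded and $\nu$ is fixed, $\Op(a_\nu)$ is bounded on $L^2(\T^d;\C^n)$ (for instance by the factorisation above combined with Parseval), so working first on the dense subspace of $u,v$ with finitely supported Fourier coefficients and then extending by continuity validates every exchange of sum and integral. This density argument is the only technical step worth mentioning explicitly.
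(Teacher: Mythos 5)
Your argument is correct and reaches the same conclusion as the paper, but it is organized around a different structural observation. The paper's proof is a direct computation of the quadratic form $\langle\Op(a_\nu)u,v\rangle$: expand both $\Op$ and $\check u$, perform the index shift $\mu\mapsto\mu+\nu$, interchange the $\xi$-integral with the $\mu$-sum, and read off $\langle u,\Op(a_\nu^\dagger)v\rangle$. You instead isolate the factorization $\Op(a_\nu)=M_\nu\,a(D)$ into a unimodular multiplication operator $M_\nu$ and a Fourier multiplier $a(D)$, each of whose adjoints is evident, and then check $a^*(D)M_{-\nu}=\Op(a_\nu^\dagger)$ by the same index shift together with the identity $\check v(\mu-\nu)=[\F^*(M_{-\nu}v)](\mu)$. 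The underlying ingredients coincide, but your decomposition makes both the boundedness of $\Op(a_\nu)$ and the origin of the shift $\mu\mapsto\mu+\nu$ appearing in \eqref{eq_anu_dag} transparent (it is exactly the conjugation of the Fourier multiplier by $M_{-\nu}$), whereas the paper's version is shorter to write out since it introduces no auxiliary operators. One small slip to fix: with the paper's conventions $\F$ maps $l^2(\Z^d)$ to $L^2(\T^d)$, so the Fourier multiplier on $L^2(\T^d;\C^n)$ is $\F\,b\,\F^*$, not $\F^*b\,\F$ as you wrote (the latter lives on $l^2(\Z^d;\C^n)$); the consequence you draw from it, namely $b(D)^*=b^*(D)$, is unaffected.
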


\begin{proof}
For any $u,v\in C^\infty(\T^d;\C^n)$ one has
\begin{align*}
\la\Op(a_{\nu})u,v\ra=&\int_{\T^d}\d\xi\Big\la\sum_{\mu\in\Z^d} e^{-2\pi i \xi\cdot\mu}\int_{\T^d}\d\zeta  e^{2 \pi i \zeta\cdot\mu} a_{\nu}(\xi,\mu)u(\zeta),v(\xi)\Big\ra\\
=&\int_{\T^d}\d\xi\sum_{\mu\in\Z^d} e^{-2\pi i \xi\cdot(\mu-\nu)}\int_{\T^d}\d\zeta e^{2 \pi i \zeta\cdot\mu}\la a(\mu)u(\zeta),v(\xi)\ra\\
=&\int_{\T^d}\d\xi\sum_{\mu\in\Z^d} e^{-2\pi i \xi\cdot\mu}\int_{\T^d}\d\zeta e^{2 \pi i \zeta\cdot(\mu+\nu)}\la a(\mu+\nu)u(\zeta),v(\xi)\ra\\
=&\int_{\T^d}\d\zeta\sum_{\mu\in\Z^d} e^{2 \pi i \zeta\cdot(\mu+\nu)}\la a(\mu+\nu)u(\zeta),\check{v}(\mu)\ra\\
=&\int_{\T^d}\d\zeta\Big\la u(\zeta),\sum_{\mu\in\Z^d} e^{-2 \pi i \zeta\cdot\mu} e^{-2 \pi i \zeta\cdot\nu}a(\mu+\nu)^*\check{v}(\mu)\Big\ra\\
=&\la u,\Op(a_{\nu}^\dagger)v\ra\notag .\qedhere
\end{align*}
\end{proof}

Some additional operators will be necessary.
We denote by $N= (N_1,N_2,\dots,N_d)$ the position operators in $l^2(\Z^d;\C^n)$ acting
as $[N_j f](\mu)=\mu_j f(\mu)$ for any $f:\Z^d\to \C^n$ with compact support and for any $\mu\in \Z^d$.
For any $\nu\in \Z^d$ we also set $S_\nu$ for the shift operator by $\nu$ acting
on any $f\in l^2(\Z^d;\C^n)$ as $[S_\nu f](\mu)=f(\mu+\nu)$.
It is easily observed that the operators $N_j$
extend to self-adjoint operators in $l^2(\Z^d;\C^n)$ while $S_\nu$ is a unitary operator in this Hilbert space.
We start by treating the short range type of assumption on the symbol that ensures the regularity of the pseudodifferential operator.

\begin{Lemma}\label{lemmashortxbis}
Let $a:\Z^{d}\to M_n(\C)$ be such that
\begin{equation}\label{rscondicionxbis}
\int_{1}^{\infty}\d\lambda \sup_{\lambda<|\mu|<2\lambda}\lp a(\mu)\rp<\infty\ .
\end{equation}
Then for any fixed $\nu \in \Z^d$ the operator $\Op(a_{\nu}+a^\dagger_{\nu})$ belongs to $C^{1,1}(A_I)$, where $a_\nu$ and $a^\dagger_\nu$
have been defined respectively in \eqref{eq_anu} and in \eqref{eq_anu_dag}.
\end{Lemma}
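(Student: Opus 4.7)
The plan is to establish the $C^{1,1}(A_I)$-membership of $T := \Op(a_{\nu}+a_{\nu}^\dagger)$ by combining a dyadic decomposition of $a$ in the $\mu$-variable with the Besov-type characterization of $C^{1,1}$. The short-range hypothesis \eqref{rscondicionxbis} is perfectly tailored to this scheme.

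To begin with, Lemma \ref{lem_involutionbis} yields $\Op(a_{\nu})^* = \Op(a_{\nu}^\dagger)$, so $T$ is bounded and self-adjoint on $L^2(\T^d;\C^n)$. One then fixes a smooth dyadic partition of unity $\{\theta_k\}_{k\geq 0}$ on $[0,\infty)$ with $\theta_k$ supported in $\{2^{k-1}\leq t \leq 2^{k+1}\}$ for $k\geq 1$, and sets $a_k(\mu):=a(\mu)\;\!\theta_k(|\mu|)$, so that $a = \sum_k a_k$ and \eqref{rscondicionxbis} becomes equivalent to $\sum_k 2^k \|a_k\|_\infty < \infty$. Writing $T=\sum_k T_k$ with $T_k:=\Op((a_k)_{\nu}+(a_k)_{\nu}^\dagger)$, each $T_k$ maps $L^2(\T^d;\C^n)$ into the finite-dimensional subspace of trigonometric polynomials with Fourier modes in $\{|\mu|\leq 2^{k+1}+|\nu|\}$; in particular, each $T_k$ is smoothing and belongs to $C^2(A_I)$.

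The main technical step, and the chief obstacle of the proof, consists in deriving the quantitative commutator estimates
\begin{equation*}
\|T_k\|\leq C\;\!\|a_k\|_\infty, \qquad \|[T_k,iA_I]\|\leq C\;\! 2^k \|a_k\|_\infty, \qquad \|[[T_k,iA_I],iA_I]\|\leq C\;\! 4^k \|a_k\|_\infty
\end{equation*}
with $C$ independent of $k$. Since $A_I$ is a first-order differential operator on $\T^d$ with smooth matrix-valued coefficients (see \eqref{ai}), it maps $\H^{s+1}(\T^d;\C^n)$ continuously into $\H^s(\T^d;\C^n)$ for every $s\in\R$. Using the Fourier localization of $T_k$ together with the fact that on band-limited subspaces the $\H^s$-norm is equivalent to the $L^2$-norm up to a factor of order $(2^k+|\nu|)^s$, one deduces $\|A_I T_k\|_{L^2\to L^2}\leq C\;\!2^k\|a_k\|_\infty$, and by a dual argument (applied to $T_k^*$, which is of the same form) also $\|T_k A_I\|_{L^2\to L^2}\leq C\;\!2^k\|a_k\|_\infty$. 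Iterating this principle, and observing that $[A_I,T_k]$ inherits the Fourier localization of $T_k$, one obtains the $4^k$ bound for the double commutator.

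With these estimates at hand, one invokes the characterization
\begin{equation*}
T\in C^{1,1}(A_I) \;\Longleftrightarrow\; \int_0^1 \frac{\d t}{t^2}\;\! \big\| e^{-itA_I}T e^{itA_I}+e^{itA_I}T e^{-itA_I}-2T\big\| <\infty.
\end{equation*}
For each $T_k$, one combines the uniform bound $\leq 4\|T_k\|\leq C\;\!\|a_k\|_\infty$ with the second-order Taylor estimate $\leq t^2\|[[T_k,iA_I],iA_I]\|\leq C\;\! 4^k t^2\|a_k\|_\infty$, available since $T_k\in C^2(A_I)$. Splitting the integral at $t=2^{-k}$, where the two bounds agree, yields a contribution of order $2^k \|a_k\|_\infty$ for each $k$. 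Summing over $k$ and using the triangle inequality for $T=\sum_k T_k$, the resulting total is of order $\sum_k 2^k\|a_k\|_\infty$, which is finite by \eqref{rscondicionxbis}, thus establishing the claim.
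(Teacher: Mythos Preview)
Your approach differs genuinely from the paper's. The paper does not estimate commutators with $A_I$ directly; instead it invokes the abstract short-range criterion \cite[Theorem~7.5.8]{ABG96} with auxiliary operator $\Lambda=(1-\Delta_{\T^d})^{1/2}\otimes\Id_n$. That criterion requires only that $\Lambda^{-2}A_I^2$ extend to a bounded operator (immediate from $A_I\in\Psi^1$) and that $\int_1^\infty\|\theta(\Lambda/\lambda)T\|\,\d\lambda<\infty$ for some nontrivial $\theta\in C_c^\infty((0,\infty))$. The latter is checked on the Fourier side, where $\theta(\Lambda/\lambda)$ becomes $\theta(\langle N\rangle/\lambda)$ and $T$ becomes $S_\nu a(N)+a(N)^*S_{-\nu}$, so that the condition reduces directly to \eqref{rscondicionxbis}. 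Your dyadic scheme essentially reproves that abstract theorem in this particular instance; the paper's route is shorter because the hard analysis is outsourced to \cite{ABG96}, while yours is more self-contained.

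Your outline is sound, but one step needs repair. The assertion that ``$[A_I,T_k]$ inherits the Fourier localization of $T_k$'' is not correct as stated: $A_I$ has $\xi$-dependent matrix coefficients, so on the Fourier side it involves convolution by a Schwartz kernel in addition to multiplication by $N$, and $[A_I,T_k]$ acquires tails outside the band of $T_k$. The double-commutator bound can however be obtained without any such inheritance. Expand
\[
[[T_k,A_I],A_I]=A_I^2T_k-2A_IT_kA_I+T_kA_I^2
\]
and estimate each term separately, using only that $A_I:\H^s(\T^d;\C^n)\to\H^{s-1}(\T^d;\C^n)$ is bounded for every $s$ together with the two-sided Fourier localization of $T_k$, which gives $\|T_k\|_{\H^{-1}\to\H^{1}}\leq C R_k^{2}\|a_k\|_\infty$ and $\|T_k\|_{L^2\to\H^{2}}=\|T_k\|_{\H^{-2}\to L^2}\leq C R_k^{2}\|a_k\|_\infty$ with $R_k=2^{k+1}+|\nu|$. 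Each of the three terms is then bounded by $C R_k^{2}\|a_k\|_\infty\sim C\,4^{k}\|a_k\|_\infty$, and the remainder of your argument (Taylor estimate, splitting at $t=2^{-k}$, summation via Tonelli) goes through unchanged.
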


Before the proof, let us mention that the interval on which the supremum is taken in \eqref{rscondicionxbis} is rather arbitrary.
Indeed, it is easily observed that this condition is equivalent to
\begin{equation*}
\int_{1}^{\infty}\d\lambda \sup_{c\lambda<|\mu|<c'\lambda}\lp a(\mu)\rp<\infty\
\end{equation*}
for any constants $c,c'$ satisfying $0<c<c'$. This flexibility will be useful several times in the following proofs.

\begin{proof}
This proof consists in an application of an abstract result for short-range type perturbations presented in \cite[Theorem 7.5.8]{ABG96}.
We shall thus check the assumptions of this theorem with $\mathscr{G} = \HH= L^2(\T^d;\C^n)$ and $\Lambda=(1-\Delta_{\T^d})^{\frac12}\otimes\Id_n$.
Condition $(1)$ corresponds to the boundedness of the unitary group generated by the self-adjoint operator $\Lambda$ in $\HH$.
Condition $(2)$ corresponds to the boundedness of the closure of the operator $\Lambda^{-2}A_I^2$ defined on the domain $\D(A_I^2)$.
Indeed, thanks to the material presented before Lemma \ref{lem_ess_adj} we know that $A_I^{2}$ is bounded from $\H^0(\T^d;\C^n)$ to
$\H^{-2}(\T^d;\C^n)$ while $\Lambda^{-2}$ is bounded from $\H^{-2}(\T^d;\C^n)$ to $\H^{}(\T^d;\C^n)$. Since $L^2(\T^d;\C^n)=\H^0(\T^d;\C^n)$
the mentioned condition is satisfied.
	
Since $\Op(a_{\nu}+a^\dagger_{\nu})$ is symmetric by Lemma \ref{lem_involutionbis} it only remains to show that there exists
$\theta\in C^{\infty}_c\big((0,\infty)\big)$ not identically zero such that
\begin{equation}\label{rsDobjetivoABGxbis}
\int_{1}^{\infty}\d\lambda\lp\theta\(\tfrac{\Lambda}{\lambda}\)\Op(a_{\nu}+a^\dagger_{\nu})\rp_{\B(\HH)} <\infty.
\end{equation}
For that purpose, let us first compute the operators $\F^*\Op(a_\nu)\F$ and $\F^*\Op(a_\nu^\dagger)\F$,
with $\F\equiv \F\otimes \Id_n$ the unitary Fourier transform from $l^2(\Z^d;\C^n)$ to $L^2(\T^d;\C^n)$.
A direct computation leads then to the equality
\begin{equation*}
\F^*\Op(a_\nu)\F=S_\nu a(N) \qquad \hbox{ and } \qquad \F^*\Op(a_\nu^\dagger)\F=a(N)^* S_{-\nu}
\end{equation*}
with $S_{\pm \nu}$ the translation operator introduced before the statement, and $a(N)$, resp.~$a(N)^*$,
the operator of multiplication by $a$, resp.~$a^*$, in $l^2(\Z^d;\C^n)$.
By using the unitarity of the Fourier transform one then obtains
for any function $\theta\in C^\infty(\R_+;[0,1])$ with support contained in $(\sqrt{2},2)$ that
\begin{align*}
&\lp\theta\(\tfrac{\Lambda}{\lambda}\)\Op(a_\nu+a_\nu^\dagger)\rp_{\B(\HH)} \\
& \leq \lp\theta\(\tfrac{\langle N\rangle}{\lambda}\)\F^*\Op(a_\nu)\F\rp_{\B(l^2(\Z^d;\C^n))}
+ \lp\theta\(\tfrac{\langle N\rangle}{\lambda}\)\F^*\Op(a_\nu^\dagger)\F\rp_{\B(l^2(\Z^d;\C^n))}\\
& = \lp\theta\(\tfrac{\langle N-\nu \rangle}{\lambda}\)a(N)\rp_{\B(l^2(\Z^d;\C^n))}
+ \lp\theta\(\tfrac{\langle N\rangle}{\lambda}\)a(N)^*\rp_{\B(l^2(\Z^d;\C^n))} \\
& \leq \sup_{2\lambda^2-1<|\mu-\nu|^2<4\lambda^2-1}\lp a(\mu)\rp
+ \sup_{2\lambda^2-1<|\mu|^2<4\lambda^2-1}\lp a(\mu)^*\rp \ .
\end{align*}
By using this final estimate and the comment made before the proof
one readily obtains that \eqref{rsDobjetivoABGxbis} is finite.
One has thus checked all the assumptions of \cite[Theorem 7.5.8]{ABG96}, from which one deduces that
$\Op(a_\nu+a_\nu^\dagger)$ belongs to $C^{1,1}(A_I)$.
\end{proof}

In the next lemma, we prove a slightly technical result which will be useful for the existence and the completeness of
the wave operators. For its statement the interpolation space $\NNN:=\big(\D(\langle N\rangle),l^2(\Z^d;\C^n)\big)_{\frac{1}{2},1}$
is necessary. Note that a precise description of this space is given in \cite[Thm.~3.6.2]{ABG96},
and that the following proof is inspired by a similar proof for Theorem 7.6.10~of the same reference.

\begin{Lemma}\label{lemma_wave}
Let $a:\Z^{d}\to M_n(\C)$ satisfy \eqref{rscondicionxbis}
and let $\nu\in \Z^d$ be fixed.
Then the operator $S_\nu a(N)$ belongs to $\B(\NNN^{*\circ},\NNN)$
where $\NNN^{*\circ}$ denotes the closure of $l^2(\Z^d;\C^n)$ in $\NNN^{*}$.
\end{Lemma}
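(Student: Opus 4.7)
The plan is to decouple $S_\nu$ from $a(N)$ and then use a Littlewood--Paley characterisation of the real interpolation space $\NNN=\big(\D(\langle N\rangle),l^2(\Z^d;\C^n)\big)_{1/2,1}$ from \cite[Sec.~3.4]{ABG96} together with the short-range decay of $a$ to convert the problem into a summable dyadic estimate.

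\emph{Step 1: reduction to $a(N)$.} Because $\langle\mu-\nu\rangle\leq(1+|\nu|)\langle\mu\rangle$ for every $\mu,\nu\in\Z^d$, the shift $S_\nu$ leaves $\D(\langle N\rangle)$ invariant with a bound depending only on $|\nu|$, and of course $S_\nu$ is unitary on $l^2(\Z^d;\C^n)$. Real interpolation thus gives $S_\nu\in\B(\NNN)$, and by duality $S_\nu\in\B(\NNN^*)$; since $S_\nu$ maps $l^2(\Z^d;\C^n)$ into itself, it preserves the closure $\NNN^{*\circ}$ as well. It is therefore enough to prove $a(N)\in\B(\NNN^{*\circ},\NNN)$.

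\emph{Step 2: Littlewood--Paley description.} Fix $\theta\in C_c^\infty\big((1/2,2)\big)$ and $\theta_0\in C_c^\infty\big([0,2)\big)$ such that the functions $\theta_k(t):=\theta(2^{-k}t)$ for $k\geq 1$, together with $\theta_0$, form a partition of unity on $[0,\infty)$. The general description of real interpolation spaces associated to a self-adjoint operator supplied in \cite[Chap.~3]{ABG96} yields the equivalent norms
\begin{equation*}
\lp u\rp_\NNN\sim\sum_{k\geq 0}2^{k/2}\lp\theta_k(\langle N\rangle)u\rp_{l^2(\Z^d;\C^n)},\qquad
\lp v\rp_{\NNN^*}\sim\sup_{k\geq 0}2^{-k/2}\lp\theta_k(\langle N\rangle)v\rp_{l^2(\Z^d;\C^n)}.
\end{equation*}

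\emph{Step 3: dyadic estimate.} The operators $a(N)$ and $\theta_k(\langle N\rangle)$ are both diagonal in the canonical basis of $l^2(\Z^d;\C^n)$ and consequently commute. Setting $s_k:=\sup\{\lp a(\mu)\rp:\mu\in\Z^d,\ \langle\mu\rangle\in\mathrm{supp}\,\theta_k\}$, we get for every $v\in l^2(\Z^d;\C^n)$
\begin{align*}
\lp a(N)v\rp_\NNN
&\lesssim\sum_{k\geq 0}2^{k/2}\lp a(N)\theta_k(\langle N\rangle)v\rp_{l^2}
\leq\sum_{k\geq 0}2^{k/2}s_k\lp\theta_k(\langle N\rangle)v\rp_{l^2}\\
&\leq\Bigl(\sum_{k\geq 0}2^{k}s_k\Bigr)\sup_{k\geq 0}2^{-k/2}\lp\theta_k(\langle N\rangle)v\rp_{l^2}
\lesssim\Bigl(\sum_{k\geq 0}2^{k}s_k\Bigr)\lp v\rp_{\NNN^*}.
\end{align*}
A direct dyadic comparison — if $|\mu_k^*|$ realises $s_k$, then $\sup_{\lambda<|\mu|<2\lambda}\lp a(\mu)\rp\geq s_k$ for $\lambda\in(|\mu_k^*|/2,|\mu_k^*|)$, an interval of length of order $2^k$ — shows that $\sum_{k\geq 0}2^k s_k\lesssim\int_1^\infty\d\lambda\sup_{\lambda<|\mu|<2\lambda}\lp a(\mu)\rp$, which is finite by hypothesis \eqref{rscondicionxbis}. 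The bound thus obtained on the subspace $l^2(\Z^d;\C^n)\subset\NNN^*$ extends by density to $\NNN^{*\circ}$, completing the proof when combined with Step 1.

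\emph{Expected main obstacle.} Everything hinges on the exact matching of the $(1/2,1)$-interpolation weights $2^{k/2}$ and $2^{-k/2}$ on $\NNN$ and $\NNN^*$ with the dyadic weight $2^k$ extracted from \eqref{rscondicionxbis}: no other interpolation index would make the three factors combine into a summable expression. The remaining work — checking that $\NNN=(\D(\langle N\rangle),l^2)_{1/2,1}$ admits the weighted-$\ell^1$ Littlewood--Paley norm used above — is standard and amounts to quoting the relevant chapter of \cite{ABG96}.
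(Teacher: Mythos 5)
Your argument is correct, and its skeleton is the one the paper uses: the Besov-type Littlewood--Paley characterization of $\NNN=\big(\D(\langle N\rangle),l^2(\Z^d;\C^n)\big)_{\frac12,1}$ and of its dual from \cite[Sec.~3.4]{ABG96}, combined with the fact that $a(N)$ is diagonal, so each shell contributes only the supremum of $\lp a(\mu)\rp$ over that shell, and the weights $\lambda^{1/2}$ and $\lambda^{-1/2}$ recombine with the shell width into exactly the integrand of \eqref{rscondicionxbis}. The packaging differs in two places. First, the paper does not peel off $S_\nu$: it estimates $\int_1^\infty\frac{\d\lambda}{\lambda}\,\lambda^{1/2}\lp\theta\big(\tfrac{\langle N\rangle}{\lambda}\big)S_\nu a(N)f\rp$ directly, commuting $S_\nu$ through (which replaces $\langle N\rangle$ by $\langle N-\nu\rangle$) and inserting a second cutoff $\tilde\theta$ with $\theta\big(\tfrac{\langle\mu-\nu\rangle}{\lambda}\big)\tilde\theta\big(\tfrac{\langle\mu\rangle}{\lambda}\big)=\theta\big(\tfrac{\langle\mu-\nu\rangle}{\lambda}\big)$, so the bound factorizes at once into $\big(\int_1^\infty\d\lambda\,\sup_\mu\lp\theta\big(\tfrac{\langle\mu-\nu\rangle}{\lambda}\big)a(\mu)\rp\big)$ times the $\NNN^*$-norm of $f$; the first factor is finite by \eqref{rscondicionxbis} together with the flexibility remark preceding Lemma \ref{lemmashortxbis}. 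Your reduction of $S_\nu$ by interpolation is a clean substitute (and in fact only $S_\nu\in\B(\NNN)$ is needed, since $S_\nu a(N)=S_\nu\circ a(N)$; the duality and $\NNN^{*\circ}$-invariance statements are superfluous). Second, the paper works with the continuous decomposition, so no comparison between a dyadic sum and the integral is required, whereas your dyadic version needs the final step $\sum_k 2^k s_k\lesssim\int_1^\infty\d\lambda\,\sup_{\lambda<|\mu|<2\lambda}\lp a(\mu)\rp$; this is correct, but you should note the bounded overlap of the intervals $\big(|\mu_k^*|/2,|\mu_k^*|\big)$ (each $\lambda$ lies in boundedly many of them) and that the finitely many small-$k$ shells contribute a finite amount, which is what makes the comparison legitimate. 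In exchange, your modular formulation isolates a statement about $a(N)$ alone that is directly reusable, while the paper's one-shot estimate is shorter and keeps hypothesis \eqref{rscondicionxbis} visible without any discretization step.
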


\begin{proof}
We observe that for fixed $\nu\in \Z^d$ one can find $\theta, \tilde\theta\in C^\infty_c\big((0,\infty)\big)$ not identically zero
such that the equality $\theta\big(\frac{\langle \mu-\nu\rangle }{\lambda}\big) \tilde \theta\big(\frac{\langle \mu\rangle }{\lambda}\big)
=\theta\big(\frac{\langle \mu-\nu\rangle }{\lambda}\big)$ holds
for any $\mu\in \Z^d$ and $\lambda\geq 1$.
Then one infers that
\begin{align}
\label{term1} & \int_1^\infty \frac{\d \lambda}{\lambda} \lp \lambda^{1/2}\theta\(\tfrac{\langle N\rangle}{\lambda}\)S_\nu a(N)f\rp \\
\nonumber & = \int_1^\infty \frac{\d \lambda}{\lambda} \lp \lambda^{1/2}\theta\(\tfrac{\langle N-\nu\rangle}{\lambda}\) a(N)f\rp \\
\label{term2} &\leq  \Big(\int_1^\infty \d \lambda  \sup_{\mu}\lp \theta\(\tfrac{\langle \mu-\nu\rangle}{\lambda}\)a(\mu)\rp\Big)
\times \sup_{\lambda > 1}\lp \lambda^{-1/2} \tilde \theta \(\tfrac{\langle N\rangle}{\lambda}\)f\rp.
\end{align}
The term \eqref{term1} corresponds to the norm of $S_\nu a(N)f$ in the space $\NNN$
while the second factor in \eqref{term2} corresponds to the norm of $f$ in $\NNN^*$.
Since the first factor in \eqref{term2} is bounded (by the assumption and the remark made before the proof of Lemma \ref{lemmashortxbis}),
one deduces the statement.
\end{proof}

Before turning our attention to the long range type of assumption we state a simple result that can be thought
of as a discrete version of the fundamental theorem of calculus. In its statement, we use the norm $|\cdot|_1$
on $\Z^d$, namely $|\mu|_1=\sum_{j=1}^d|\mu_j|$. For any $\nu\in \Z^d$ and any $f:\Z^d\to \C$ we also set
\begin{equation*}
[\triangle_\nu f](\mu) = f(\mu+\nu) - f(\mu), \qquad \forall \mu \in \Z^d.
\end{equation*}

\begin{Lemma}
For any fixed $\nu\in\Z^d$ there exist $\{j_\ell\}_{\ell=1}^{|\nu|_1}\subset\{1,\dots,d\}$ and
$\{\gamma_\ell\}_{\ell=1}^{|\nu|_1}\subset\Z^d$ with $|\gamma_\ell|\leq|\nu|$ such that for any
$f:\Z^d\to \C$ one has
\begin{equation}\label{Laformule}
\triangle_\nu f=\sum_{\ell=1}^{|\nu|_1}\sgn(\nu_{j_\ell})(S_{\gamma_\ell}\triangle_{j_\ell} f)\ .
\end{equation}
\end{Lemma}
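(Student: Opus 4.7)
The plan is to prove this by a telescoping/path-decomposition argument: the translation by $\nu$ is realized as a concatenation of $|\nu|_1$ elementary unit translations, one for each unit the path advances along some coordinate axis.

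More precisely, I would first fix a path from $0$ to $\nu$ in $\Z^d$ of minimal length. A convenient choice: move $|\nu_1|$ steps in direction $\sgn(\nu_1)\delta_1$, then $|\nu_2|$ steps in direction $\sgn(\nu_2)\delta_2$, and so on, where $\delta_j$ denotes the $j$-th canonical basis vector of $\Z^d$. This produces a sequence $0=\rho_0,\rho_1,\dots,\rho_{|\nu|_1}=\nu$ together with indices $j_\ell\in\{1,\dots,d\}$ such that $\rho_\ell-\rho_{\ell-1}=\sgn(\nu_{j_\ell})\delta_{j_\ell}$ for every $\ell$.

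Then I would telescope: for any $\mu \in \Z^d$,
\begin{equation*}
[\triangle_\nu f](\mu) = f(\mu+\nu)-f(\mu) = \sum_{\ell=1}^{|\nu|_1}\big[f(\mu+\rho_\ell)-f(\mu+\rho_{\ell-1})\big].
\end{equation*}
The $\ell$-th summand equals $\sgn(\nu_{j_\ell})\,[S_{\gamma_\ell}\triangle_{j_\ell}f](\mu)$ if we set $\gamma_\ell:=\rho_{\ell-1}$ when $\sgn(\nu_{j_\ell})=+1$, and $\gamma_\ell:=\rho_{\ell-1}-\delta_{j_\ell}$ when $\sgn(\nu_{j_\ell})=-1$. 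This is a direct check from the definition $[\triangle_j f](\mu)=f(\mu+\delta_j)-f(\mu)$.

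It remains to verify $|\gamma_\ell|\leq |\nu|$. By construction each coordinate of $\rho_{\ell-1}$ has the same sign as (or is zero compared to) the corresponding coordinate of $\nu$, with absolute value no larger than $|\nu_{j}|$ in the $j$-th slot; componentwise this yields $|\rho_{\ell-1}|^2\leq |\nu|^2$. In the case $\sgn(\nu_{j_\ell})=-1$, subtracting $\delta_{j_\ell}$ from $\rho_{\ell-1}$ merely moves one step further in a direction where $|\nu_{j_\ell}|$ has not yet been exhausted, so the same componentwise estimate still holds and $|\gamma_\ell|\leq |\nu|$. No serious obstacle is anticipated here; the whole statement is combinatorial bookkeeping around the telescoping identity, and the only minor care needed is tracking the sign convention when the step is negative.
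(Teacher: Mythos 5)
Your proof is correct: the telescoping along a coordinate-wise lattice path from $0$ to $\nu$, with $\gamma_\ell=\rho_{\ell-1}$ or $\rho_{\ell-1}-\delta_{j_\ell}=\rho_\ell$ depending on the sign of the step, verifies \eqref{Laformule} directly from the definitions of $S_\gamma$ and $\triangle_j$, and since every $\gamma_\ell$ is a partial sum on the path the componentwise bound gives $|\gamma_\ell|\leq|\nu|$. This is exactly the ``discrete fundamental theorem of calculus'' argument the paper has in mind (it states the lemma without proof), so there is nothing to add.
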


\begin{Lemma}\label{lemmalong}
Let $b:\Z^{d}\to\R$ be such that $\lim_{|\mu|\to \infty}b(\mu)=0$, and assume that for every $j\in\{1,\dots,d\}$
\begin{equation}\label{rlcondicion}
\int_{1}^{\infty}\d{\lambda}\sup_{\lambda<|\mu|<2\lambda}|(\triangle_j b)(\mu)|<\infty\ .
\end{equation}
Then, by setting $[b](\xi,\mu):=b(\mu)\Id_n$ for any $\xi\in \T^d$ and $\mu\in \Z^d$, the operator $\Op(b)$ is self-adjoint and belongs to $C^{1,1}(A_I)$.
\end{Lemma}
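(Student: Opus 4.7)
The self-adjointness of $\Op([b])$ is immediate: since $b(\mu)\to 0$ at infinity and is real-valued, $b$ is a bounded real function on $\Z^{d}$, so the identity $\F^{*}\Op([b])\F=b(N)$ already used in the proof of Lemma~\ref{lemmashortxbis} shows that $\Op([b])$ is unitarily equivalent to the bounded self-adjoint multiplication operator $b(N)$ on $l^{2}(\Z^{d};\C^{n})$, and hence itself bounded self-adjoint on $L^{2}(\T^{d};\C^{n})$.

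For the $C^{1,1}(A_I)$-regularity the plan is to follow the proof of Lemma~\ref{lemmashortxbis} but to invoke the long-range variant of \cite[Theorem 7.5.8]{ABG96} in which the short-range hypothesis is placed on a suitable commutator of $V=\Op([b])$ rather than on $V$ itself. The reason is that $b$ need not satisfy~\eqref{rscondicionxbis} even when~\eqref{rlcondicion} holds, as illustrated by $b(\mu)\sim 1/\log(2+|\mu|)$, so on the Fourier side $\|\theta(\Lambda/\lambda)\Op([b])\|\sim\sup_{|\mu|\sim\lambda}|b(\mu)|$ is not $\lambda$-integrable and the short-range argument of Lemma~\ref{lemmashortxbis} is unavailable. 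However, a direct Fourier-transform computation using $\F^{*}M_{e^{2\pi i\xi\cdot\nu}}\F=S_{\nu}$ and $b(N)S_{\nu}-S_{\nu}b(N)=(\triangle_{\nu}b)(N)S_{\nu}$ yields the key identity
\begin{equation*}
\bigl[\Op([b]),\,M_{e^{2\pi i\xi\cdot\nu}}\bigr]=-\Op([\triangle_{\nu}b])\,M_{e^{2\pi i\xi\cdot\nu}},\qquad \nu\in\Z^{d}.
\end{equation*}
Combined with the discrete fundamental theorem of calculus~\eqref{Laformule}, which expresses $\triangle_{\nu}b$ as a sum of $|\nu|_{1}$ shifts of the axis-aligned differences $\triangle_{j}b$, the dyadic short-range estimate in the proof of Lemma~\ref{lemmashortxbis} then gives
\begin{equation*}
\int_{1}^{\infty}\d\lambda\,\bigl\|\theta(\Lambda/\lambda)\,[\Op([b]),M_{e^{2\pi i\xi\cdot\nu}}]\bigr\|_{\B(\HH)}\le C\,|\nu|_{1}
\end{equation*}
with a constant $C$ independent of $\nu$, thanks to the hypothesis~\eqref{rlcondicion} applied to each $\triangle_{j}b$.

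The main obstacle is to identify the precise variant of \cite[Theorem 7.5.8]{ABG96} which accepts a control of this form and to verify its compatibility with the conjugate operator $A_I$: morally, the Fourier multipliers $M_{e^{2\pi i\xi\cdot\nu}}$ play the role of translations along momentum directions, and the ABG framework of Chapter~7 of \cite{ABG96} is designed precisely to accommodate long-range perturbations whose differences along such translations are short-range. Once the appropriate long-range criterion is in hand, the argument closes by the same short-range reasoning as in Lemma~\ref{lemmashortxbis}, but applied to the commutator $[\Op([b]),M_{e^{2\pi i\xi\cdot\nu}}]$ rather than to $\Op([b])$ itself.
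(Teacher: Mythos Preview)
Your self-adjointness argument is fine. The $C^{1,1}(A_I)$ argument, however, is not a proof: you explicitly name ``the main obstacle'' as locating the right abstract long-range criterion and checking its compatibility with $A_I$, and you leave that obstacle unresolved. There is no black-box theorem in \cite{ABG96} (or elsewhere) that takes the family of estimates $\int_{1}^{\infty}\d\lambda\,\|\theta(\Lambda/\lambda)[\Op([b]),M_{e^{2\pi i\xi\cdot\nu}}]\|\le C|\nu|_1$ and outputs $\Op([b])\in C^{1,1}(A_I)$ without further information on how $A_I$ is built out of the $M_{e^{2\pi i\xi\cdot\nu}}$. The operators $M_{e^{2\pi i\xi\cdot\nu}}$ are multiplication operators in $\xi$, whereas $A_I$ is a first-order differential operator in $\xi$; the link between them is not a matter of abstract nonsense but of the specific symbol of $A_I$, and that link must be made explicit.

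The paper does exactly this. It writes $A_I=\Op(a)$ with $a\in S^{1}(\T^{d}\times\Z^{d};M_n(\C))$ and computes the commutator directly via the toroidal symbol calculus: the symbol of $[\Op(b),A_I]$ is $c(\xi,\mu)=\sum_{\nu}e^{-2\pi i\xi\cdot\nu}[\triangle_{\nu}b](\mu)\,\check a(\nu,\mu)$, which does bring in your differences $\triangle_{\nu}b$ but weighted by the Fourier coefficients $\check a(\nu,\mu)$ of the smooth symbol $a$. The rapid decay of $\check a(\nu,\mu)$ in $\nu$ (from smoothness in $\xi$) combined with \eqref{Laformule} and \eqref{rlcondicion} first yields boundedness of $[\Op(b),A_I]$, and then a dyadic $l^{1}$--$l^{\infty}$ kernel estimate (splitting $|\nu|<\lambda/4$ versus $|\nu|\ge\lambda/4$) gives $\int_{1}^{\infty}\tfrac{\d\lambda}{\lambda}\|\theta(\Lambda/\lambda)[i\Op(b),A_I]\|<\infty$. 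This is precisely the hypothesis of \cite[Theorem 6.1]{BS99}, which is the concrete long-range criterion actually invoked. Your intuition that the $\triangle_{\nu}b$ govern the commutator is correct, but the work lies in exploiting the symbol class of $A_I$ to sum over $\nu$, not in an abstract substitute for that computation.
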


\begin{proof}
Since $b$ is real-valued, it directly follows from Lemma \ref{lem_involutionbis} that $\Op(b)$ is self-adjoint.

We shall now show that the commutator $[\Op(b),A_I]$, defined as a difference of operators on $C^\infty(\T^d;\C^n)$,
extends to an element of $\B\big(L^2(\T^d;\C^n)\big)$.
Since $A_I$ is a differential operator of order $1$, the exists a toroidal symbol $a\in S^1\big(\T^d\times \Z^d;M_n(\C)\big)$
such that $A_I=\Op(a)$, see Section \ref{subsec_A}.
It is then easily observed that the operator $\Op(a)\Op(b)$ coincides with $\Op(ab)$ with the symbol $ab$ given by
$a(\xi,\mu)b(\mu)$ for any $\xi\in \T^d$ and $\mu\in \Z^d$.
On the other hand, a few more computations show that the operator $\Op(b)\Op(a)$ is also a toroidal pseudodifferential operator $\Op(b\diamond a)$
with symbol
\begin{equation}\label{eq_sum}
b\diamond a(\xi,\mu) = \sum_{\nu\in \Z^d} e^{-2\pi i \xi\cdot \nu} b(\mu+\nu)\;\!\check a(\nu,\mu)
\end{equation}
where $\check a(\nu,\mu)=\int_{\T^d}\d \zeta e^{2\pi i \zeta\cdot \nu}a(\zeta,\mu)$.
Note that \eqref{eq_sum} is well-defined since the map $\zeta\mapsto a(\zeta,\mu)$ is smooth, and thus its inverse Fourier transform
is of Schwartz class on $\Z^d$.
We then observes that r.h.s~of \eqref{eq_sum} allows us to express the symbol $c$ of $[\Op(b),\Op(a)]$. Indeed one gets
\begin{align*}
c(\xi,\mu) :=b\diamond a(\xi,\mu) - b(\mu)a(\xi,\mu)
& = \sum_{\nu\in \Z^d} e^{-2\pi i \xi\cdot \nu} [\triangle_\nu b](\mu)\;\!\check a(\nu,\mu) \\
& = \F\big[\nu\mapsto [\triangle_\nu b](\mu)\;\!  \check a(\nu,\mu)\big](\xi)\ .
\end{align*}

A few computations show that
\begin{equation*}
[\F^* \Op(c)\F f](\mu) = \sum_{\nu\in \Z^d} \check c(\mu-\nu,\nu)f(\nu)
= \sum_{\nu\in \Z^d} [\triangle_{\mu-\nu} b](\nu)\;\!  \check a(\mu-\nu,\nu) f(\nu).
\end{equation*}
Thus, the operator $K :=\F^* \Op(c)\F$ is bounded if the map
$(\nu,\mu)\mapsto[\triangle_\nu b](\mu)\;\!  \check a(\nu,\mu) $
belongs to $l^1\big(\Z^d;l^\infty(\Z^d)\big)$, with $l^1$ for the $\nu$ variable and $l^\infty$ for the $\mu$ variable.
In order to show this property, recall that $a\in S^1\big(\T^d\times \Z^d;M_n(\C)\big)$ and by taking the equality \eqref{Laformule} into account,
observe that for any $q\in \N$
\begin{align}
\nonumber |[\triangle_\nu b](\mu)\;\!  \check a(\nu,\mu)|
& = \Big|\sum_{\ell=1}^{|\nu|_1}\sgn(\nu_{j_\ell})[S_{\gamma_\ell}\triangle_{j_\ell}b](\mu) \;\!  \check a(\nu,\mu)\Big| \\
\nonumber& \leq \sum_{\ell=1}^{|\nu|_1}\big|[S_{\gamma_\ell}\triangle_{j_\ell}b](\mu) \;\!  \langle \mu\rangle\langle \nu\rangle^{-q}\big|\
\big|\langle\nu\rangle^q \langle\mu\rangle^{-1}\check a(\nu,\mu)\big| \\
\nonumber & \leq C_q \sum_{\ell=1}^{|\nu|_1}\big|[S_{\gamma_\ell}\langle \cdot\rangle\triangle_{j_\ell}b](\mu)  \big|\
[\langle \mu+\gamma_\ell\rangle^{-1} \langle \mu\rangle]  \langle \nu\rangle^{-q} \\
\nonumber &\leq C_q \sum_{\ell=1}^{|\nu|_1}\Big\{\sup_{|\mu|-|\gamma_\ell|\leq|\beta|\leq|\mu|+|\gamma_\ell|}\langle \beta\rangle\big|[\triangle_{j_\ell}b](\beta)\big|\Big\}
\langle \gamma_\ell\rangle \langle \nu\rangle^{-q} \\
&\leq C_q \sum_{j=1}^{d}\Big\{\sup_{|\mu|-|\nu|\leq|\beta|\leq|\mu|+|\nu|}\langle \beta\rangle\big|[\triangle_{j}b](\beta)\big|\Big\}|\nu_j|
\langle \nu\rangle^{-q+1}\label{44} \\
\nonumber & \leq C_q  \langle \nu\rangle^{-q+2}
\end{align}
where $C_q$ is a constant which depends on $a$ and $b$ but not on $\mu$ or $\nu$, and which can be different from one line to another one.
Note that we have also used that $\sup_{\beta\in \Z^d}\langle \beta\rangle\big|[\triangle_{j}b](\beta)\big|<\infty$
for any $j\in \{1,\dots,d\}$, as a consequence of condition \eqref{rlcondicion}.
By choosing $q$ large enough, this expression belong to $l^1(\Z^d)$, which concludes the proof that $K$,
and consequently $[\Op(b),A_I]$, extend to bounded operators.

In order to apply \cite[Theorem 6.1]{BS99} we still need to show that
\begin{equation}\label{rlobjetivo}
\int_{1}^{\infty}\tfrac{\d\lambda}{\lambda}\lp\theta\(\tfrac{\Lambda}{\lambda}\)[i\Op(b),A_I]\rp_{\B(L^2(\T^d;\C^n))} <\infty
\end{equation}
for some function $\theta\in C^{\infty}_c\big((0,\infty)\big)$ not identically zero
and for the operator $\Lambda$ introduced in the proof of Lemma \ref{rscondicionxbis}.
In fact, by using the unitarity of the Fourier transform one then obtains
for any function $\theta\in C^\infty(\R_+;[0,1])$ that
\begin{align*}
\lp\theta\(\tfrac{\Lambda}{\lambda}\)[i\Op(b),A_I]\rp_{\B(L^2(\T^d;\C^n))}
& =  \lp\theta\(\tfrac{\Lambda}{\lambda}\)\Op(c)\rp_{\B(L^2(\T^d;\C^n))} \\
& = \lp\theta\(\tfrac{\langle N\rangle}{\lambda}\)K\rp_{\B(l^2(\Z^d;\C^n))} \\
& = \lp K \theta\(\tfrac{\langle N\rangle}{\lambda}\)\rp_{\B(l^2(\Z^d;\C^n))},
\end{align*}
where we have used in the last equality that $iK$ and $\theta\(\tfrac{\langle N\rangle}{\lambda}\)$ are self-adjoint.

Let us thus consider $\theta$ with support contained in $(r,s)$ with $r=\frac{11}{8}$ and $s=\frac{13}{8}$,
and let $K_\lambda$ denote the operator $K\theta \big(\frac{\langle N\rangle}{\lambda}\big)$. We shall again estimate the norm of $K_\lambda$ by estimating
the norm $l^1-l^\infty$ of the map
\begin{equation*}
(\nu,\mu)\mapsto \theta\(\tfrac{\langle \mu\rangle}{\lambda}\)[\triangle_\nu b](\mu)\;\!  \check a(\nu,\mu)\ .
\end{equation*}
By the previous computation we know that $\lambda\to\lp K_\lambda\rp_{1,\infty}$ is bounded so we need only to study its behavior when $\lambda\to\infty$.
For that purpose, observe first that
\begin{align*}
\lp K_\lambda\rp_{1,\infty}&=\sum_{\nu\in\Z^d}\sup_{\mu\in\Z^d}|\theta\(\tfrac{\langle \mu\rangle}{\lambda}\)[\triangle_\nu b](\mu)\;\!  \check a(\nu,\mu)|\\
&=\sum_{\nu\in\Z^d}\sup_{\sqrt{r^2\lambda^2-1}<|\mu|<\sqrt{s^2\lambda^2-1}}|[\triangle_\nu b](\mu)\;\!  \check a(\nu,\mu)|,
\end{align*}
and let us divide this sum in two parts. For the first we use the estimate \eqref{44}:
\begin{align*}
&\sum_{|\nu|<\frac{\lambda}{4}}\sup_{\sqrt{r^2\lambda^2-1}<|\mu|<\sqrt{s^2\lambda^2-1}}|[\triangle_\nu b](\mu)\;\!  \check a(\nu,\mu)| \\
&\leq \sum_{|\nu|<\frac{\lambda}{4}}\sup_{\sqrt{r^2\lambda^2-1}<|\mu|<\sqrt{s^2\lambda^2-1}}\( C_q \sum_{j=1}^{d}\Big\{\sup_{|\mu|-|\nu|\leq|\beta|\leq|\mu|+|\nu|}\langle \beta\rangle\big|[\triangle_{j}b](\beta)\big|\Big\}|\nu_j|
\langle \nu\rangle^{-q+1}\)\\
&\leq C_q\sum_{|\nu|<\frac{\lambda}{4}}\sup_{\sqrt{r^2\lambda^2-1}<|\mu|<\sqrt{s^2\lambda^2-1}}\(\langle \mu\rangle  \sum_{j=1}^{d}\Big\{\sup_{|\mu|-|\nu|\leq|\beta|\leq|\mu|+|\nu|}\big|[\triangle_{j}b](\beta)\big|\Big\}|\nu_j|
\langle \nu\rangle^{-q+2}\)\\
&\leq C_q\lambda\sum_{|\nu|<\frac{\lambda}{4}}\sup_{\sqrt{r^2\lambda^2-1}<|\mu|<\sqrt{s^2\lambda^2-1}}\(  \sum_{j=1}^{d}\Big\{\sup_{|\mu|-|\nu|\leq|\beta|\leq|\mu|+|\nu|}\big|[\triangle_{j}b](\beta)\big|\Big\}|
\langle \nu\rangle^{-q+3}\)\\
&\leq C_q\lambda\sum_{\nu<\frac{\lambda}{4}}\sum_{j=1}^{d}\sup_{\sqrt{r^2\lambda^2-1}-|\nu|<|\beta|<\sqrt{s^2\lambda^2-1}+|\nu|}\(  \big|[\triangle_{j}b](\beta)\big|\)|
\langle \nu\rangle^{-q+3}\\
&\leq C_q\lambda\sum_{j=1}^{d}\sup_{\sqrt{r^2\lambda^2-1}-\frac{\lambda}{4}<|\beta|<\sqrt{s^2\lambda^2-1}+\frac{\lambda}{4}} \big|[\triangle_{j}b](\beta)\big|,
\end{align*}
where, for $q=d+4$, the estimate
$$
\sum_{|\nu|<\frac{\lambda}{4}}\langle \nu\rangle^{-q+3}\leq \sum_{\nu\in \Z^d}\langle \nu\rangle^{-q+3}<\infty
$$
has been used. Note that in the previous computation, $C_q$ is a constant which does not depend on $\mu$ or $\nu$,
but which can be different from one line to another one.

For the other part of the summation, we just compute
\begin{align*}
&\sum_{|\nu|\geq\frac{\lambda}{4}}\sup_{\sqrt{r^2\lambda^2-1}<|\mu|<\sqrt{s^2\lambda^2-1}}|[\triangle_\nu b](\mu)\;\!  \check a(\nu,\mu)|\\
&\leq \sum_{|\nu|\geq\frac{\lambda}{4}}\sup_{\sqrt{r^2\lambda^2-1}<|\mu|<\sqrt{s^2\lambda^2-1}}\sum_{\ell=1}^{|\nu|_1}|[S_{\gamma_\ell}\triangle_{j_\ell} b](\mu)\;\!  \check a(\nu,\mu)|\\
&\leq \max_j\lp\triangle_jb\rp_\infty\sum_{|\nu|\geq\frac{\lambda}{4}}\sup_{\sqrt{r^2\lambda^2-1}<|\mu|<\sqrt{s^2\lambda^2-1}}\langle \mu\rangle\langle \nu\rangle^{-q+1} \; \big|\langle\nu\rangle^{q}\langle\mu\rangle^{-1}\check a(\nu,\mu)\big|\\
&\leq C_q'\lambda\sum_{|\nu|\geq\frac{\lambda}{4}}\la\nu\ra^{-q+1}
\end{align*}
with $C_q'$ a constant which does not depend on $\mu$ or $\nu$.
Hence for $\lambda$ large enough and still for $q=d+4$ we get
\begin{align*}
\lp K_\lambda\rp_{1,\infty}&\leq C_q\lambda\sum_{j=1}^{d}\sup_{\sqrt{r^2\lambda^2-1}-\frac{\lambda}{4}<|\beta|<\sqrt{s^2\lambda^2-1}+\frac{\lambda}{4}}
\big|[\triangle_{j}b](\beta)\big|+ C_q'\lambda\sum_{|\nu|\geq\frac{\lambda}{4}}\la\nu\ra^{-d-3}\\
&\leq C_q \lambda\sum_{j=1}^{d}\sup_{\lambda<|\beta|<2\lambda}\big|[\triangle_{j}b](\beta)\big|+ C_q'\lambda^{-2}.
\end{align*}
By finally taking into account the inequality $\lp\theta\(\frac{\Lambda}{\lambda}\)[i\Op(b),A_I]\rp_{\B(L^2(\T^d;\C^n))} \leq\lp K_\lambda\rp_{1,\infty}$
and the assumption \eqref{rlcondicion} one concludes that \eqref{rlobjetivo} is finite.
By applying the statement of \cite[Theorem 6.1]{BS99}, one deduces that $\Op(b)$ belongs to $C^{1,1}(A_I)$.
\end{proof}

\subsection{Regularity of the perturbations and proof of Theorem \ref{principalvertices}}

Since the operator $H_0$ is unitarily equivalent to a bounded analytically fibered operator in the Hilbert space $L^2(\T^d;\C^n)$,
the second step consists in performing a similar transformation to the operator $\J H \J^*$, where $\J$ was introduced in \eqref{eq_def_J}.
For that purpose, recall first that the maps $\U$ and $\I$ have been introduced respectively in \eqref{def_de_U} and in \eqref{def_de_I}.

\begin{Proposition}
The difference $\I\U\big(\Delta(X,m_0)-\J\Delta(X,m)\J^*\big)\U^*\I^*$ is a toroidal pseudodifferential operator.
Moreover, its symbol $b:\T^d\times\Z^d\to M_{n}(\C)$ is given by
\begin{equation}\label{def_de_b}
b(\xi,\mu): =\sum_{\ee\in\A(\XX)}\Big([T(\ee)](\mu) - e^{2\pi i \xi \cdot \eta(\ee)} [K(\ee)](\mu)\Big)
\end{equation}
with $K(\ee):\Z^d\to M_n(\C)$ and $T(\ee):\Z^d\to M_n(\C)$ defined by
\begin{equation}\label{K1}
\[K(\ee)\](\mu)_{j\ell}:=\left\{\begin{matrix} \(\frac{m((\mu-\eta(\ee))\hat{\ee})}{m((\mu-\eta(\ee)) o(\hat{\ee}))^{\frac12}m((\mu-\eta(\ee)) t(\hat{\ee}))^{\frac12}}-\frac{m_0(\ee)}{m_0(o(\ee))^{\frac12}m_0(t(\ee))^{\frac12}}\) & \hbox{ if } \ee = (\xx_j,\xx_\ell) \\
0 &  \hbox{otherwise}\end{matrix}\right.
\end{equation}
and
\begin{equation}\label{K2}
\[T(\ee)\](\mu)_{j \ell}:=\left\{\begin{matrix} \(\frac{m(\mu\hat{\ee})}{m(\mu o(\hat{\ee}))}-\frac{m_0(\ee)}{m_0(o(\ee))}\) & \hbox{ if } o(\ee)=\xx_j \hbox{ and } j=\ell \\
0 &  \hbox{otherwise} \end{matrix} \right.
\end{equation}
\end{Proposition}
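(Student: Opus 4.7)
The plan is to compute the operator $\I\U\big(\Delta(X,m_0)-\J\Delta(X,m)\J^*\big)\U^*\I^*$ directly on test functions and to match the resulting expression against the pseudodifferential formula $[\Op(a)u](\xi)=\sum_{\mu\in\Z^d}e^{-2\pi i\xi\cdot\mu}a(\xi,\mu)\check u(\mu)$ of Section \ref{subsec_A}. The algebraic starting point is that, since $\J$ is multiplication by $(m/m_0)^{\frac12}$, one computes directly on $f\in l^2(X,m_0)$
$$[\J\Delta(X,m)\J^*f](x)=\sum_{\e\in\A_x}\frac{m(\e)\;\!m_0(t(\e))^{\frac12}}{m(x)^{\frac12}m_0(x)^{\frac12}m(t(\e))^{\frac12}}f(t(\e))-\sum_{\e\in\A_x}\frac{m(\e)}{m(x)}f(x),$$
so that the difference $\Delta(X,m_0)f-\J\Delta(X,m)\J^*f$ splits cleanly into a hopping piece proportional to $f(t(\e))$ and a diagonal piece proportional to $f(x)$.

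Next I would apply $\U$ as in the periodic case already treated in the previous section. For $\xx\in V(\XX)$ and $\mu\in\Gamma$, edges $\e\in\A_{\mu\hat\xx}$ are in bijection with $\ee\in\A_\xx$ via $\e=\mu\hat\ee$, and by \eqref{eq_use_ind} one has $t(\mu\hat\ee)=\mu\eta(\ee)\widehat{t(\ee)}$. Inserting $[\U^*\f](y)=\int_{\hat\Gamma}\d\zeta\;\!\zeta([y])\f(\zeta,\check y)$ and exploiting the $\Gamma$-periodicity of $m_0$ (so that $m_0(\mu\hat\xx)=m_0(\xx)$ and $m_0(t(\mu\hat\ee))=m_0(t(\ee))$), the hopping piece yields a sum in which $\zeta$ couples to $\mu+\eta(\ee)$; the change of index $\mu\mapsto\mu-\eta(\ee)$ then extracts the phase $e^{2\pi i\xi\cdot\eta(\ee)}$ that will appear in the symbol. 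The diagonal piece requires no such shift and produces directly a multiplier in the $\mu$-variable.

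The third step is the conjugation by $\I$, which multiplies the $(j,\ell)$-entry by $m_0(\xx_j)^{\frac12}m_0(\xx_\ell)^{-\frac12}$. On the hopping piece these weights combine with the $m_0(t(\e))^{\frac12}$ already present in the numerator to convert the periodic coefficient $m_0(\ee)/m_0(o(\ee))$ from $\Delta(X,m_0)$ into the symmetric form $m_0(\ee)/[m_0(o(\ee))m_0(t(\ee))]^{\frac12}$, and the perturbed coefficient into $m(\mu\hat\ee)/[m(\mu o(\hat\ee))m(\mu t(\hat\ee))]^{\frac12}$ after the shift $\mu\mapsto\mu-\eta(\ee)$ and the identity $t(\hat\ee)=\eta(\ee)\widehat{t(\ee)}$; this reproduces precisely $-[K(\ee)](\mu)_{j\ell}$ from \eqref{K1}. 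On the diagonal piece $\xx_j=\xx_\ell$ so the $\I$-weights cancel, yielding $[T(\ee)](\mu)_{jj}$ as in \eqref{K2}. Comparing with the pseudodifferential formula and extending the summation over $\A_{\xx_j}$ to $\A(\XX)$ by means of the support conditions in \eqref{K1}--\eqref{K2} delivers the announced symbol $b(\xi,\mu)$, and a standard density argument extends the identity to all of $L^2(\T^d;\C^n)$.

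The sole obstacle in this programme is bookkeeping: one must verify that the asymmetric factor $m_0(t(\e))^{\frac12}$ issued from $\J\Delta(X,m)\J^*$ is restored by the $\I$-weights and by the periodicity of $m_0$ into the symmetric denominators of $K(\ee)$, and that the shift $\mu\mapsto\mu-\eta(\ee)$ is applied consistently to every $m$--factor under the multiplicative/additive identification $\Gamma\cong\Z^d$.
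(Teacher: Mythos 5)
Your proposal is correct and follows essentially the same route as the paper's proof: an explicit computation of $\J\Delta(X,m)\J^*$ (your first display agrees with the paper's), transfer to the quotient picture via $\e=\mu\hat\ee$ and \eqref{eq_use_ind}, the index shift $\mu\mapsto\mu-\eta(\ee)$ producing the phase $e^{2\pi i\xi\cdot\eta(\ee)}$, and the $\I$-conjugation weights $m_0(\xx_j)^{\frac12}m_0(\xx_\ell)^{-\frac12}$ cancelling the asymmetric factor and symmetrizing the $m_0$-coefficient, so that comparison with \eqref{def_h} yields exactly $K(\ee)$ and $T(\ee)$. The remaining bookkeeping you flag is precisely what the paper's proof carries out, so no genuine gap remains.
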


Before the proof, let us introduce the following convenient map:
\begin{equation*}
\imath:V(X)\to\{1,\dots,n\}, \qquad x_{\imath(x)}:=\widehat{\check{x}},
\end{equation*}
which associates to any $x\in V(X)$ the index of the representative $x_j\in V(X)$
which belongs to the same orbit under the action of $\Z^d$.

\begin{proof}
By a direct computation one first obtains an explicit expression for the operator $\J \Delta(X,m) \J^*$, namely for any $f\in l^2(X,m_0)$ and $x\in V(X)$,
\begin{equation*}
[\J \Delta(X,m) \J^*f](x)=\sum_{\e\in\A_x}\frac{m(\e)}{m(x)^{\frac{1}{2}}m(t(\e))^{\frac{1}{2}}} \frac{m_0(t(\e))^{\frac{1}{2}}}{m_0(x)^{\frac{1}{2}}}f\big(t(\e)\big)
-\deg_m(x)f(x) \ .
\end{equation*}
In particular, by using the unique decomposition introduced in Section \ref{sec_H_0}
as well as the equalities \eqref{eq_use_ind} one has for $x=\mu x_j$
\begin{align*}
[\J \Delta(X,m) \J^*f](\mu x_j)& =\sum_{\e \in \A_{\mu x_j}}\frac{m(\e)}{m(\mu x_j)^{\frac{1}{2}}m(t(\e))^{\frac{1}{2}}} \frac{m_0(t(\e))^{\frac{1}{2}}}{m_0(\mu x_j)^{\frac{1}{2}}}f\big(t(\e)\big)-\deg_m(\mu x_j)f(\mu x_j) \\
& =\sum_{\e \in \A_{x_j}}\!\!\!\frac{m(\mu\e)}{m(\mu o(\e))^{\frac{1}{2}}m(\mu t(\e))^{\frac{1}{2}}} \frac{m_0(\mu t(\e))^{\frac{1}{2}}}{m_0(\mu o(\e))^{\frac{1}{2}}}f\big(\mu t(\e)\big)-\deg_m(\mu x_j)f(\mu x_j) \\
& =\sum_{\ee \in \A_{\xx_j}}\frac{m(\mu\hat{\ee})}{m(\mu o(\hat{\ee}))^{\frac{1}{2}}m(\mu t(\hat{\ee}))^{\frac{1}{2}}} \frac{m_0(t(\ee))^{\frac{1}{2}}}{m_0(o(\ee))^{\frac{1}{2}}}f\big(\mu t(\hat{\ee})\big)-\deg_m(\mu x_j)f(\mu x_j) \ .
\end{align*}

Now,  by taking into account the explicit form of $\U$ and $\I$, and by identifying $u\in C^\infty(\T^d;\C^n)$ with
$(u_1,\dots,u_n)$ with each $u_j\in C^\infty(\T^d)$ one infers that
\begin{align*}
& [\I \U \J \Delta(X,m)\J^* \U^* \I^* u]_j(\xi) \\
& = \sum_{\mu\in \Z^d} e^{-2\pi i \xi\cdot \mu}\sum_{\ee\in\A_{\xx_j}} \frac{m(\mu \hat \ee)}{m(\mu o(\hat{\ee}))^{\frac{1}{2}}m(\mu t(\hat{\ee}))^{\frac{1}{2}}}
\check{u}_{\imath(t(\ee))}\big(\mu+\eta(\ee)\big)\notag \\
& \quad - \sum_{\mu\in \Z^d} e^{-2\pi i \xi\cdot \mu} \deg_m(\mu x_j)\check{u}_j(\mu) \\
& = \sum_{\mu\in \Z^d} e^{-2\pi i \xi\cdot \mu}\sum_{\ee\in\A_{\xx_j}} \frac{m((\mu-\eta(\ee)) \hat \ee)}{m((\mu-\eta(\ee))o(\hat{\ee}))^{\frac{1}{2}}m((\mu-\eta(\ee)) t(\hat{\ee}))^{\frac{1}{2}}}
e^{2\pi i \xi\cdot \eta(\ee)}\check{u}_{\imath(t(\ee))}(\mu)\notag \\
& \quad - \sum_{\mu\in \Z^d} e^{-2\pi i \xi\cdot \mu} \Big(\sum_{\ee\in \A_{\xx_j}}\frac{m(\mu \hat \ee)}{m(\mu o(\hat \ee))}\Big)\check{u}_j(\mu)\ ,
\end{align*}
where the definition of the degree provided in \eqref{eq_def_deg} has used for the last equality.
Clearly, this operator corresponds to a toroidal pseudodifferential operator. It then only remains to combine this expression with
\eqref{def_h} and one deduces that the operator $\I\U\big(\Delta(X,m_0)-\J\Delta(X,m)\J^*\big)\U^*\I^*$ is also a toroidal pseudodifferential operator
whose symbol is given by \eqref{def_de_b}.
\end{proof}

The precise formula for the symbol $b$ is useful because one can now apply Lemma \ref{lemmashortxbis} to see that $\I\U\big(\Delta(X,m_0)-\J\Delta(X,m)\J^*\big)\U^*\I^*$
belongs to $C^{1,1}(A_I)$.

\begin{Lemma}
Assume that the measure $m$ satisfies the condition \eqref{measureshort}. Then the difference $\I\U\big(\Delta(X,m_0)-\J\Delta(X,m)\J^*\big)\U^*\I^*$ belongs to $C^{1,1}(A_I)$.
\end{Lemma}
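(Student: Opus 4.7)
The plan is to apply Lemma \ref{lemmashortxbis} to the symbol $b$ displayed in \eqref{def_de_b}. Since $\A(\XX)$ is finite and $C^{1,1}(A_I)$ is a vector space, it is enough to decompose $b$ as a finite sum of symbols of the form $a_{\nu}+a_{\nu}^{\dagger}$ (as in \eqref{eq_anu} and \eqref{eq_anu_dag}) and to verify the short-range decay condition \eqref{rscondicionxbis} on each of the corresponding coefficient functions $a:\Z^{d}\to M_{n}(\C)$.

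The first step is to handle the contribution of the $T(\ee)$ terms. For these pieces $\nu=0$ and $[T(\ee)](\mu)$ is a real diagonal, hence Hermitian, matrix; with $a:=T(\ee)$ this gives $a_{0}=a_{0}^{\dagger}$, so $\Op(T(\ee))=\tfrac{1}{2}\Op(a_{0}+a_{0}^{\dagger})$. Using that $[\mu\hat\ee]=\mu$ together with the $m_{0}$-periodicity $m_{0}(\mu\hat\ee)=m_{0}(\ee)$ and $m_{0}(o(\mu\hat\ee))=m_{0}(o(\ee))$, a direct computation yields
\[
\big|[T(\ee)](\mu)_{jj}\big|=\left|\frac{m(\mu\hat\ee)}{m(o(\mu\hat\ee))}-\frac{m_{0}(\mu\hat\ee)}{m_{0}(o(\mu\hat\ee))}\right|,
\]
and integrating its supremum over $\lambda<|\mu|<2\lambda$ is majorised by the integrand in \eqref{measureshort}, hence finite.

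The second step is to pair each $\ee\in\A(\XX)$ with $\bar\ee\in\A(\XX)$ in the $K$-part of the sum. Setting $\nu:=\eta(\ee)$ and $a:=K(\ee)$ one has $a_{\nu}(\xi,\mu)=e^{2\pi i\xi\cdot\eta(\ee)}K(\ee)(\mu)$, while $a_{\nu}^{\dagger}(\xi,\mu)=e^{-2\pi i\xi\cdot\eta(\ee)}K(\ee)(\mu+\nu)^{*}$. The key identity to establish is
\[
K(\bar\ee)(\mu)=K(\ee)(\mu+\eta(\ee))^{*},
\]
which, combined with $\eta(\bar\ee)=-\eta(\ee)$, identifies the combined contribution of $\ee$ and $\bar\ee$ with $-\Op(a_{\nu}+a_{\nu}^{\dagger})$. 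Once this identity is secured, Lemma \ref{lemmashortxbis} applies provided $a=K(\ee)$ satisfies \eqref{rscondicionxbis}. This last decay estimate follows from \eqref{measureshort} by writing the only nonzero entry of $K(\ee)(\mu)$ as a difference $\sqrt{AB}-\sqrt{A_{0}B_{0}}$, with $A,B$ (resp.\ $A_{0},B_{0}$) the two ratios $m(\e)/m(o(\e))$ and $m(\bar\e)/m(o(\bar\e))$ (resp.\ for $m_{0}$), and applying the elementary inequality $|\sqrt{AB}-\sqrt{A_{0}B_{0}}|\leq C(|A-A_{0}|+|B-B_{0}|)$. The latter is valid since $A_{0},B_{0}$ are bounded below by positivity of $m_{0}$ and since $|A-A_{0}|$ tends to zero at infinity (by summability of \eqref{measureshort}), so $A,B$ are bounded. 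Each of the two resulting summands is of the form integrated in \eqref{measureshort}, up to a harmless shift $\mu\mapsto\mu+\eta(\ee)$ that is absorbed by the scale flexibility noted just after Lemma \ref{lemmashortxbis}.

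The main technical obstacle is the verification of $K(\bar\ee)(\mu)=K(\ee)(\mu+\eta(\ee))^{*}$. Although it is forced a priori by self-adjointness of $\I\U(\Delta(X,m_{0})-\J\Delta(X,m)\J^{*})\U^{*}\I^{*}$, checking it concretely requires careful bookkeeping of the conventions for $\hat{\phantom{\ee}}$, $[\,\cdot\,]$ and the index $\eta$. In particular, one must keep $\overline{\hat\ee}$ (the reverse of $\hat\ee$, which starts at $t(\hat\ee)=\eta(\ee)x_{\ell}$) distinct from $\widehat{\bar\ee}$ (the lift of $\bar\ee$ starting at $x_{\ell}$), and exploit the relation $\overline{\hat\ee}=\eta(\ee)\widehat{\bar\ee}$ to rewrite $m((\mu-\eta(\bar\ee))\hat{\bar\ee})$ as $m(\mu\hat\ee)$ using the symmetry of $m$ under edge reversal, and to match the vertex weights in the denominator. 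With this identification in hand, summing over the finite family $\A(\XX)$ and invoking stability of $C^{1,1}(A_{I})$ under finite sums concludes the proof.
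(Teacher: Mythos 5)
Your proposal is correct and follows essentially the same route as the paper: split the symbol $b$ from \eqref{def_de_b} into its $T$ and $K$ parts, pair each $\ee$ with $\bar\ee$ in the $K$ part via the identity $K(\bar\ee)(\mu)=K(\ee)(\mu+\eta(\ee))^{*}$ (derived from $\overline{\hat\ee}=\eta(\ee)\hat{\bar\ee}$ and the edge-reversal symmetry of $m$), verify \eqref{rscondicionxbis} for each coefficient by bounding $|\sqrt{AB}-\sqrt{A_0B_0}|$ in terms of $|A-A_0|+|B-B_0|$, and invoke Lemma \ref{lemmashortxbis}. The paper uses an exact algebraic identity where you use the corresponding inequality, and it does not record the a posteriori self-adjointness remark, but these are cosmetic; the argument is the same.
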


\begin{proof}
Observe first that the symbol $b$ of the previous statement satisfies
\begin{align*}
b(\xi,\mu) & =\sum_{\ee\in\A(\XX)}\Big([T(\ee)](\mu) - e^{2\pi i \xi \cdot \eta(\ee)} [K(\ee)](\mu)\Big) \\
& = \sum_{\ee\in\A(\XX)}[T(\ee)](\mu) - \frac{1}{2}\sum_{\ee\in\A(\XX)}\Big( e^{2\pi i \xi \cdot \eta(\ee)} [K(\ee)](\mu)
+ e^{2\pi i \xi \cdot \eta(\bar\ee)} [K(\bar\ee)](\mu)\Big).
\end{align*}
By keeping in mind that
\begin{equation}\label{eq_to_remember}
\eta(\ee)\hat{\overline{\ee}}=\overline{\hat{\ee}},\quad \eta(\ee)o(\hat{\overline{\ee}})=t(\hat{\ee}), \quad \hbox{and}\quad
\eta(\ee)t(\hat{\overline{\ee}})=o(\hat{\ee})
\end{equation}
one observes that for $\ee=(\xx_j,\xx_\ell)$
\begin{align*}
[K(\bar\ee)](\mu)_{\ell j} & =\frac{m((\mu-\eta(\bar\ee))\hat{\bar\ee})}{m((\mu-\eta(\bar\ee)) o(\hat{\bar\ee}))^{\frac12}m((\mu-\eta(\bar\ee)) t(\hat{\bar\ee}))^{\frac12}}-\frac{m_0(\bar\ee)}{m_0(o(\bar\ee))^{\frac12}m_0(t(\bar\ee))^{\frac12}} \\
& = \frac{m((\mu+\eta(\ee))\hat{\bar\ee})}{m((\mu+\eta(\ee)) o(\hat{\bar\ee}))^{\frac12}m((\mu+\eta(\ee)) t(\hat{\bar\ee}))^{\frac12}}-\frac{m_0(\ee)}{m_0(t(\ee))^{\frac12}m_0(o(\ee))^{\frac12}} \\
& = \frac{m(\mu\overline{\hat{\ee}})}{m(\mu t(\hat{\ee}))^{\frac12}m(\mu o(\hat{\ee}))^{\frac12}}-\frac{m_0(\ee)}{m_0(t(\ee))^{\frac12}m_0(o(\ee))^{\frac12}} \\
& = \frac{m(\mu \hat{\ee})}{m(\mu o(\hat{\ee}))^{\frac12}m(\mu t(\hat{\ee}))^{\frac12}}-\frac{m_0(\ee)}{m_0(o(\ee))^{\frac12}m_0(t(\ee))^{\frac12}} \\
& = [K(\ee)](\mu+\eta(\ee))_{j\ell}\ .
\end{align*}
By using the notation of Lemma \ref{lem_involutionbis} one also deduces that
\begin{align*}
e^{2\pi i \xi \cdot \eta(\ee)} [K(\ee)](\mu) + e^{2\pi i \xi \cdot \eta(\bar\ee)} [K(\bar\ee)](\mu)
& =  e^{2\pi i \xi \cdot \eta(\ee)} [K(\ee)](\mu) + e^{-2\pi i \xi \cdot \eta(\ee)} \big([K(\ee)](\mu+\eta(\ee))\big)^* \\
& = \big[K(\ee)_{\eta(\ee)}\big](\xi,\mu) + \big[K(\ee)_{\eta(\ee)}^\dagger\big](\xi,\mu) \\
& = \Big(K(\ee)_{\eta(\ee)} + K(\ee)_{\eta(\ee)}^\dagger\Big)(\xi,\mu) \ ,
\end{align*}
and by summing up these information, one has thus obtained that
\begin{equation}\label{petitxchouxfleursauchocolat}
b= \sum_{\ee\in\A(\XX)} \Big(T(\ee) - \frac{1}{2}\big(K(\ee)_{\eta(\ee)} + K(\ee)_{\eta(\ee)}^\dagger\big) \Big)\ .
\end{equation}

We are thus in a suitable position for using Lemma \ref{lemmashortxbis}, and it remains to show that
the condition \eqref{measureshort} implies the condition \eqref{rscondicionxbis} for the corresponding function $a$.
Since the sum in \eqref{petitxchouxfleursauchocolat} is finite, we can consider the contribution due to each $\ee$ separately.
Let us fix $\ee\in \A(\XX)$ and set for any $\mu \in\Z^d$: $f(\mu):=\frac{m((\mu-\eta(\ee))\hat{\ee})}{m((\mu-\eta(\ee)) o(\hat{\ee}))}$,
$g(\mu):=\frac{m(\mu\hat{\overline{\ee}})}{m(\mu o(\hat{\overline{\ee}}))}$, $f_0(\mu):=\frac{m_0(\ee)}{m_0(o(\ee))}$ and
$g_0(\mu):=\frac{m_0(\overline{\ee})}{m_0(o(\overline{\ee}))}$, the last two expressions being clearly independent of $\mu$.
Then, by taking the relations \eqref{eq_to_remember} into account one deduces that
\begin{align*}
\lp \[K(\ee)\](\mu)\rp
&=\left|f(\mu)^\frac12 g(\mu)^\frac12 -f_0(\mu)^\frac12 g_0(\mu)^\frac12\right|\\
&=\left|\big(f(\mu)-f_0(\mu)\big)\frac{g(\mu)^\frac12}{f(\mu)^\frac12+f_0(\mu)^\frac12}
+\big(g(\mu)-g_0(\mu)\big)\frac{f_0(\mu)^\frac12}{g(\mu)^\frac12+g_0(\mu)^\frac12}\right|\ .
\end{align*}
Since the functions
$\frac{g^\frac12}{f^\frac12+f_0^\frac12}$ and $\frac{f_0^\frac12}{g^\frac12+g_0^\frac12}$ are bounded on $\Z^d$  we finally obtain
\begin{align*}
\sup_{\lambda<|\mu|<2\lambda }\lp \[K(\ee)\](\mu)\rp
& \leq C \Big( \sup_{\lambda<|\mu|<2\lambda }|f(\mu)-f_0(\mu)|+\sup_{\lambda<|\mu|<2\lambda }|g(\mu)-g_0(\mu)|\Big) \\
& \leq C \Big( \sup_{\lambda<|\mu|<2\lambda }\Big|\frac{m((\mu-\eta(\ee))\hat{\ee})}{m((\mu-\eta(\ee)) o(\hat{\ee}))}-\frac{m_0(\ee)}{m_0(o(\ee))}\Big| \\
& \qquad \qquad + \sup_{\lambda<|\mu|<2\lambda }\Big|\frac{m(\mu\hat{\overline{\ee}})}{m(\mu o(\hat{\overline{\ee}}))}
-\frac{m_0(\overline{\ee})}{m_0(o(\overline{\ee}))}\Big|\Big).
\end{align*}
By taking into account the invariance of condition \eqref{measureshort} under a finite shift, one deduces from this condition and from the
previous computation that
\begin{equation}\label{eq_cond_K}
\int_1^\infty \d \lambda  \sup_{\lambda<|\mu|<2\lambda }\lp \[K(\ee)\](\mu)\rp <\infty.
\end{equation}
As a consequence of Lemma \ref{lemmashortxbis} it means that $\Op\big(K(\ee)_{\eta(\ee)} + K(\ee)_{\eta(\ee)}^\dagger\big) \in C^{1,1}(A_I)$.

For $T(\ee)$ the situation is much simpler. Clearly, $[T(\ee)](\mu)$ is a self-adjoint matrix for any $\mu\in \Z^d$.
By using again the notation introduced in Lemma \ref{lem_involutionbis} one infers that
$T(\ee)\equiv T(\ee)_{\mathbf 0} = T(\ee)_{\mathbf 0}^\dagger $ which implies that $\Op\big(T(\ee)\big)$ is self-adjoint.
In addition, it easily follows from  the assumption \eqref{rscondicionxbis} that
\begin{equation}\label{eq_dec_T}
\int_1^\infty \d \lambda  \sup_{\lambda<|\mu|<2\lambda }\lp \[T(\ee)\](\mu)\rp <\infty,
\end{equation}
which corresponds to the condition \eqref{rscondicionxbis} of Lemma \ref{lemmashortxbis}.
It thus follows that $\Op(b)$ belongs to $C^{1,1}(A_I)$, which corresponds to the statement of the lemma.
\end{proof}

We now turn our attention to the multiplicative perturbation. Since $\J R\J^*=R$ we can directly consider the operator $R-R_0$ in $l^2(X,m_0)$.

\begin{Lemma}
Assume that the difference $R-R_0$ is equal to $R_s+R_l$ and that these functions satisfy \eqref{Rshipotesis} and \eqref{Rlhipotesis}.
Then $\I\U(R-R_0)\U^*\I^*$ belongs to $C^{1,1}(A_I)$.
\end{Lemma}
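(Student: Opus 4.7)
The plan is first to identify $\I\U(R-R_0)\U^*\I^*$ as a toroidal pseudodifferential operator with a diagonal, $\xi$-independent, self-adjoint symbol, and then to split that symbol into pieces that fit either Lemma~\ref{lemmashortxbis} or Lemma~\ref{lemmalong}. Mimicking the computation performed for $\J\Delta(X,m)\J^*$ in the preceding proposition, I would first check that for any $u\in C^\infty(\T^d;\C^n)$ and any $j\in\{1,\dots,n\}$,
\begin{equation*}
[\I\U R\U^*\I^*u]_j(\xi)=\sum_{\mu\in\Z^d}e^{-2\pi i\xi\cdot\mu}\;\!R(\mu x_j)\;\!\check u_j(\mu),
\end{equation*}
so that $\I\U R\U^*\I^*=\Op(r)$ with symbol $r(\xi,\mu)=\diag\bigl(R(\mu x_j)\bigr)_{j=1}^n$. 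Since $R_0$ is $\Gamma$-periodic one has $R_0(\mu x_j)=R_0(\xx_j)$, so that $\I\U(R-R_0)\U^*\I^*=\Op(q)$ with the diagonal, self-adjoint, $\xi$-independent symbol $q(\mu)=\diag\bigl((R_s+R_l)(\mu x_j)\bigr)_{j=1}^n$.

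Next I would decompose $q=q_s+q_l^{\mathrm{sc}}+q_l'$ with $q_s(\mu):=\diag(R_s(\mu x_j))_j$, $q_l^{\mathrm{sc}}(\mu):=R_l(\mu x_1)\,\Id_n$, and $q_l'(\mu):=\diag(R_l(\mu x_j)-R_l(\mu x_1))_j$. For $q_s$ the pointwise bound $\|q_s(\mu)\|\leq\sup_{|[x]|=|\mu|}|R_s(x)|$ turns the hypothesis \eqref{Rshipotesis} into \eqref{rscondicionxbis}, and since $q_s$ is self-adjoint we have $(q_s)_0+(q_s)_0^\dagger=2q_s$ in the notation of Lemma~\ref{lem_involutionbis}, so Lemma~\ref{lemmashortxbis} applied with $\nu=0$ yields $\Op(q_s)\in C^{1,1}(A_I)$. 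For the two pieces involving $R_l$ I would use a discrete fundamental-theorem-of-calculus argument inside the crystal: since $\XX$ is finite and connected, one may fix finite paths in $X$ from $x_1$ to each $x_j$ and from $x_1$ to $\delta_i x_1$ for each canonical generator $\delta_i$ of $\Gamma\simeq\Z^d$. Their $\Gamma$-translates by $\mu$ are chains of bounded length from $\mu x_1$ to $\mu x_j$ and from $\mu x_1$ to $(\mu+\delta_i)x_1$, all of whose edges $\e$ satisfy $\lambda-c<|[\e]|<2\lambda+c$ whenever $\lambda<|\mu|<2\lambda$, for some $c$ independent of $\mu$. Telescoping along these chains then yields
\begin{equation*}
\sup_{\lambda<|\mu|<2\lambda}|R_l(\mu x_j)-R_l(\mu x_1)|\leq C\sup_{\lambda-c<|[\e]|<2\lambda+c}|R_l(t(\e))-R_l(o(\e))|
\end{equation*}
and the analogous bound for $\sup_{\lambda<|\mu|<2\lambda}|\triangle_i[R_l(\,\cdot\,x_1)](\mu)|$. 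The first bound, together with the remark following Lemma~\ref{lemmashortxbis}, turns the long-range hypothesis \eqref{Rlhipotesis} into the short-range condition \eqref{rscondicionxbis} for the self-adjoint symbol $q_l'$, so Lemma~\ref{lemmashortxbis} again provides $\Op(q_l')\in C^{1,1}(A_I)$; the second bound, combined with $R_l(x)\to 0$, is exactly the hypothesis of Lemma~\ref{lemmalong} for the scalar function $\mu\mapsto R_l(\mu x_1)$, giving $\Op(q_l^{\mathrm{sc}})\in C^{1,1}(A_I)$. Linearity of $C^{1,1}(A_I)$ then concludes.

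The main obstacle is precisely the necessity of isolating a purely scalar long-range piece. A naive matrix-valued extension of Lemma~\ref{lemmalong} applied directly to $\diag(R_l(\mu x_j))_j$ would, when computing the symbol of $[\Op(b),A_I]$, produce besides the expected $\triangle_\rho b(\mu)\;\!\check a(\rho,\mu)$ term a genuine off-diagonal commutator $[b(\mu),\check a(\rho,\mu)]$ that is not controlled by any finite-difference assumption on $b$. Extracting $q_l^{\mathrm{sc}}=R_l(\mu x_1)\,\Id_n$ neutralises this obstruction, because a scalar symbol commutes with the matrix-valued symbol of $A_I$; the residual diagonal piece $q_l'$ then turns out to be effectively short-range thanks to the discrete fundamental-theorem argument inside the crystal, which is the technical heart of the step and the bookkeeping that one must carry out carefully.
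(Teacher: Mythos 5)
Your proof is correct and follows essentially the same route as the paper: isolate the scalar long-range piece $R_l(\mu x_1)\,\Id_n$ for Lemma~\ref{lemmalong}, and treat the remainder via the path-telescoping argument inside the crystal as a short-range symbol fed into Lemma~\ref{lemmashortxbis}. The only cosmetic difference is that you split the short-range remainder into $q_s$ and $q_l'$, whereas the paper packages them into a single $\tilde R_s(x)=R_s(x)+\big(R_l(x)-R_l([x]x_1)\big)$ before applying Lemma~\ref{lemmashortxbis}; your closing remark on why the long-range symbol must be reduced to a scalar multiple of $\Id_n$ is an accurate gloss on a point the paper leaves implicit.
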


\begin{proof}
Let us first set for any $x\in V(X)$
\begin{equation*}
\tilde{R}_s(x):=R_s(x)+\big(R_l(x)-R_l([x]x_1)\big)\quad \hbox{and}\quad
\tilde{R}_l(x):=R_l([x]x_1)
\end{equation*}
which implies that $R=\tilde{R}_s+\tilde{R}_l$. Note that a similar decomposition in the continuous case was already used in \cite{GN98a}.
The terms $\tilde{R}_s$ and $\tilde{R}_l$ will be treated separately, starting with $\tilde{R}_s$.

By some easy computations we get that $\I\U\tilde{R}_s\U^*\I^* = \Op(r_s)$, with the symbol $r_s:\Z^d\to M_n(\C)$ (and thus independent of the variable $\xi$) given by
\begin{equation}\label{rbbscalculado}
r_s(\mu)_{j\ell}=\big(R_s(\mu x_j)+R_l(\mu x_j)-R_l(\mu x_1)\big)\delta_{j\ell}.
\end{equation}
Since $r_s(\mu)=r_s(\mu)^*$ for any $\mu\in \Z^d$, we only need to show that $r_s$ satisfies \eqref{rscondicionxbis} in order to apply the content of Lemma \ref{lemmashortxbis}.
In fact, a sufficient condition is to show that for any $j\in \{1,\dots,n\}$ one has
\begin{equation}\label{ukulelebakalele}
\int_1^\infty \d \lambda  \sup_{\lambda<|\mu|<2\lambda }| r_s(\mu)_{jj}| <\infty.
\end{equation}

For that purpose, let us consider for any $j\in \{2,3,\dots n\}$ a fixed finite path $\alpha_j=\{\e_{j,p}\}_{p=1}^{N_j}$ between $x_1$ and $x_j$.
By applying $\mu$ to each edge in $\alpha_j$ we get a path between $\mu x_1$ and $\mu x_j$.
We will then use the fact that for a path $\alpha=\{\e_p\}_{p=1}^{N}$ between two vertices $x$ and $y$,
\ie~$o(\e_{1})=x$, $t(\e_{p})=o(\e_{p+1})$ and $t(\e_{N})=y$, the following formula holds for every $f\in C^{0}(X)$:
\begin{equation*}
f(y)-f(x)=\sum_{\e\in\alpha}\Big(f\big(t(\e)\big)-f\big(o(\e)\big)\Big)\ .
\end{equation*}
Keeping this notation in mind we can compute
\begin{align*}
|r_s(\mu)_{jj}| & = |R_s(\mu x_j)+R_l(\mu x_j)-R_l(\mu x_1)|\\
& \leq |R_s(\mu x_j)|+\sum_{p=1}^{N_j}\big|R_l\big(t(\mu\e_{j,p})\big)-R_l\big(o(\mu\e_{j,p})\big)\big|.
\end{align*}

Clearly, as a consequence of assumption \eqref{Rshipotesis} the first term satisfies
\begin{equation*}
\int_1^\infty \d \lambda  \sup_{\lambda<|\mu|<2\lambda }| R_s(\mu x_j)| \leq \int_1^\infty \d \lambda  \sup_{\lambda<|[x]|<2\lambda }| R_s(x)|<\infty.
\end{equation*}
On the other hand, as a consequence of assumption \eqref{Rlhipotesis} and its invariance under translations one also infers that
\begin{equation}\label{Ahjenesaisplusquoiinventer}
\int_1^\infty \d \lambda  \sup_{\lambda<|\mu|<2\lambda } \big|R_l\big(t(\mu\e_{j,p})\big)-R_l\big(o(\mu\e_{j,p})\big)\big|<\infty
\end{equation}
for any $\e_{j,p}$. One then deduces that the estimate \eqref{ukulelebakalele} holds,
and by applying Lemma \ref{lemmashortxbis} one gets that the operator $\Op(r_s)$ belongs to $C^{1,1}(A_I)$.

For the term $\tilde{R}_l$ we first observe with the notations of Lemma \ref{lemmalong} that
$\I\U\tilde{R}_l\U^*\I^* = \Op(r_l\Id_n)$, with the symbol $r_l:\Z^d\to \R$ defined by
\begin{equation}\label{rbblcalculado}
r_l(\mu)=R_l(\mu x_1)\ .
\end{equation}
It remains to show that $r_l$ satisfies the conditions of Lemma \ref{lemmalong}.
For that purpose let $\{\delta_j\}_{j=1}^d$ denote the canonical base of $\Z^{d}$.
We fix $\beta_j=\{\e_{j,p}\}_p^{N_j}$ a path between $x_1$ and $\delta_jx_1$.
By applying $\mu$ to each edge in $\beta_j$ we get a path between $\mu x_1$ and $(\mu+\delta_j)x_1$. Therefore we have
\begin{equation*}
|[\triangle_j r_l](\mu)| =|R_l\big((\mu+\delta_j)x_1\big)-R_l(\mu x_1)| \leq\sum_{p=1}^{N_j}\big|R_l\big(t(\mu\e_{j,p})\big)-R_l\big(o(\mu\e_{j,p})\big)\big|.
\end{equation*}
By invoking the same argument as before it follows from \eqref{Ahjenesaisplusquoiinventer} that the assumption \eqref{rlcondicion} of Lemma \ref{lemmalong}
is satisfied. By applying this lemma, it follows that $\Op(r_l\Id_n)$ belongs to $C^{1,1}(A_I)$, as expected.
\end{proof}

\begin{proof}[Proof of Theorem \ref{principalvertices}]
As a consequence of the previous lemmas the difference
\begin{equation}\label{eq_difference}
\I\U\big(\J H\J^*-H_0\big)\U^*\I^*
\end{equation}
belongs to $C^{1,1}(A_I)$. Moreover, it follows from the arguments presented in the proofs of these lemmas that
the difference \eqref{eq_difference} can be written as a finite sum of simpler operators, each of them being compact and self-adjoint.
One thus infers that the operator \eqref{eq_difference} is a compact operator and self-adjoint in $L^2(\T^d;\C^n)$.
We are thus in a suitable position for using Theorem \ref{mourreap}
with $S=h_0$ and $V$ defined by \eqref{eq_difference}. For $\tilde{\mu}^{A_I}(h)$, one can use the result obtained in \eqref{eq_mu},
by considering a slightly bigger interval $I'$ with $I\subset I'\subset \R\setminus \tau$.
Then, the points 1. and 2. of Theorem \ref{principalvertices} follow from Theorem \ref{mourreap} by taking into account the conjugation by the unitary
transform $\I\U$.

For the existence and asymptotic completeness of the wave operators, observe first that since $\J$ is unitary, these properties for $W_\pm(H,H_0;\J^*,I)$ are equivalent to
the same properties for $W_\pm(\J H\J^*,H_0;I)$. Then, by using again the unitary transform $\I\U$, one observes that this is still equivalent to
the existence and the asymptotic completeness of
\begin{equation}\label{eq_local}
W_\pm(\I\U \J H\J^* \U^*\I^*,\I\U H_0 \U^*\I^*;I).
\end{equation}
Such properties will now be deduced from \cite[Theorem 7.4.3]{ABG96}. Indeed, according to
that statement, if the difference \eqref{eq_difference} belongs to $\B(\KK^{*\circ},\KK)$,
with $\KK=\big(\D(A_I),L^2(\T^d;\C^n)\big)_{\frac{1}{2},1}$ and $\KK^{*\circ}$ the closure of $L^2(\T^d;\C^n)$ in $\KK^*$,
then the local wave operators \eqref{eq_local} exist and are asymptotically complete.

In order to check this condition, recall that the operator $\Lambda:=\big(\Id-\Delta_{\T^d}\big)\otimes \Id_n$
had been introduced in the proof of Lemma \ref{lem_ess_adj}, and as a consequence of Nelson's commutator theorem
one has $\D(\Lambda)\subset \D(A_I)$.
It then follows that $\LL:=\big(\D(\Lambda),L^2(\T^d;\C^n)\big)_{\frac{1}{2},1}\subset \big(\D(A_I),L^2(\T^d;\C^n)\big)_{\frac{1}{2},1}$, \
as shown for example in \cite[Corol.~2.6.3]{ABG96},
and then $\B(\LL^{*\circ},\LL)\subset \B(\KK^{*\circ},\KK)$.
However, we shall still consider the Fourier transform version of the spaces.
More precisely, let us set $\NNN:=\F^* \big(\D(\Lambda),L^2(\T^d;\C^n)\big)_{\frac{1}{2},1}$
which is equal to $\big(\D(\langle N\rangle),l^2(\Z^d;\C^n)\big)_{\frac{1}{2},1}$. Accordingly, one has to show that
\begin{equation}\label{eq_inclusion}
\F^*\I\U\big(\J H\J^*-H_0\big)\U^*\I^*\F \ \in \B(\NNN^{*\circ},\NNN).
\end{equation}

Fortunately, the l.h.s.~has already been computed and corresponds to
\begin{equation}\label{finite_sum}
\sum_{\ee\in \A(\XX)}\Big([T(\ee)](N)- S_{\eta(\ee)}[K(\ee)](N)\Big) + r_s(N)
\end{equation}
with $K(\ee)$ and $T(\ee)$ introduced respectively in \eqref{K1} and \eqref{K2}, and $r_s$ introduced in \eqref{rbbscalculado} when $R_l=0$.
We also recall that $S_{\eta(\ee)}$ denotes the shift operator by $\eta(\ee)$.
In addition, each of these terms satisfy an estimate of the form
\begin{equation*}
\int_1^\infty \d \lambda  \sup_{\lambda<|\mu|<2\lambda }\lp V(\mu)\rp <\infty,
\end{equation*}
with $V$ replacing $K(\ee)$, $T(\ee)$ or $r_s$, as shown in \eqref{eq_cond_K}, \eqref{eq_dec_T}, and \eqref{ukulelebakalele}.
Thus, we can apply Lemma \ref{lemma_wave} and deduce that all operator
$S_{\eta(\ee)}[K(\ee)](N)$, $[T(\ee)](N)$ and $r_s(N)$ belong to $\B(\NNN^{*\circ},\NNN)$.
Since the summation in \eqref{finite_sum} is finite, one concludes that the inclusion in \eqref{eq_inclusion} indeed holds.
\end{proof}

\end{document}